\newtheorem{theorem}{Theorem}
\newtheorem{corollary}[theorem]{Corollary}
\newtheorem{conj}[theorem]{Conjecture}
\newtheorem{lemma}[theorem]{Lemma}
\theoremstyle{definition}
\theoremstyle{remark}
\newtheorem{rem}{Remark}
\numberwithin{equation}{section}
\numberwithin{theorem}{section}
\numberwithin{defn}{section}
\newcommand*\diff{\mathop{}\!\mathrm{d}}
\newcommand{\CT}{\mathrm{CT}}
\begin{document}
\title[Rogers--Ramanujan type identities]{Rogers--Ramanujan type identities involving double, triple and quadruple sums}

\author{Zhi Li and Liuquan Wang}
\address{School of Mathematics and Statistics, Wuhan University, Wuhan 430072, Hubei, People's Republic of China}
\email{2021202010017@whu.edu.cn}
\address{School of Mathematics and Statistics, Wuhan University, Wuhan 430072, Hubei, People's Republic of China}
\email{wanglq@whu.edu.cn;mathlqwang@163.com}

\subjclass[2010]{11P84, 33D15, 33D60, 11F03}

\keywords{Rogers--Ramanujan type identities; constant term method; sum-to-product identities; Slater's identities}

\begin{abstract}
We prove a number of new Rogers--Ramanujan type identities involving double, triple and quadruple sums. They were discovered after an extensive search using Maple. The main idea of proofs is to reduce them to some known identities in the literature. This is achieved by direct summation or the constant term method. We also obtain some new single-sum identities as consequences.
\end{abstract}

\maketitle
\tableofcontents

\section{Introduction}\label{sec-intro}

The famous Rogers--Ramanujan  identities state that
\begin{align}
    \sum_{n=0}^\infty \frac{q^{n^2}}{(q;q)_n}=\frac{1}{(q,q^4;q^5)_\infty}, \label{Rama-1}\\
    \sum_{n=0}^\infty \frac{q^{n^2+n}}{(q;q)_n}=\frac{1}{(q^2,q^3;q^5)_\infty}. \label{Rama-2}
\end{align}
Here we use $q$-series notation
\begin{align}
&(a;q)_\infty:=\prod\limits_{n=0}^\infty (1-aq^n), \quad |q|<1,\\
&(a;q)_n:=\frac{(a;q)_\infty}{(aq^n;q)_\infty}, \quad n \in \mathbb{N}, \\
&(a_1,a_2,\dots,a_m;q)_n:=\prod\limits_{k=1}^m (a_k;q)_n, \quad n \in \mathbb{N}\cup \{\infty\}.
\end{align}

The identities \eqref{Rama-1} and \eqref{Rama-2}  were first discovered by Rogers \cite{Rogers1894} and later rediscovered by Ramanujan.
They motivated people to search for identities of similar type. One of the famous works on this topic is Slater's list \cite{Slater}, which contain 130 such identities. For example, Slater proved that
\begin{align}
 &\sum_{n=0}^\infty\frac{ q^{n(n+2)}}{(q^4;q^4)_n}=\frac{1}{(q^2,q^3;q^5)_\infty(-q^2;q^2)_\infty},\quad  \text{(S.\ 16)} \label{Slater16}\\
    &\sum_{n=0}^\infty\frac{ q^{n^2}}{(q^4;q^4)_n}=\frac{1}{(q,q^4;q^5)_\infty(-q^2;q^2)_\infty}.\quad  \text{(S.\ 20)} \label{Slater20}
\end{align}
Here and throughout this paper, we shall use the label (S.\ $n$) to denote the $n$-th identity in Slater's list.

As a multi-sum generalization of the Rogers--Ramanujan identities, the Andrews--Gordon identity states that for integers $k,s$ such that $k\geq 2$ and $1\leq s \leq k$,
\begin{align}
\sum_{n_1,\dots,n_{k-1}\geq 0} \frac{q^{N_1^2+\cdots+N_{k-1}^2+N_s+\cdots +N_{k-1}}}{(q;q)_{n_1}(q;q)_{n_2}\cdots (q;q)_{n_{k-1}}}  =\frac{(q^s,q^{2k+1-s},q^{2k+1};q^{2k+1})_\infty}{(q;q)_\infty} \label{AG}
\end{align}
where if $j\leq k-1$, $N_j=n_j+\cdots+n_{k-1}$ and $N_k=0$.
The combinatorial version of this identity was earlier given by B. Gordon \cite{Gordon1961}, and the above analytical version was first formulated by G.E. Andrews \cite{Andrews1974}.

D.M.\ Bressoud \cite{Bressoud1979} gave the even moduli companion of the Andrews--Gordon identity: for integers $k\geq 2$ and $1\leq s \leq k$,
\begin{align}\label{eq-Bressoud}
\sum_{n_1,\dots,n_{k-1}\geq 0} \frac{q^{N_1^2+\cdots+N_{k-1}^2+N_s+\cdots +N_{k-1}}}{(q;q)_{n_1}(q;q)_{n_2}\cdots (q;q)_{n_{k-2}} (q^2;q^2)_{n_{k-1}}} =\frac{(q^s,q^{2k-s},q^{2k};q^{2k})_\infty}{(q;q)_\infty}
\end{align}
where as before, $N_j=n_j+\cdots+n_{k-1}$ if $j\leq k-1$ and $N_k=0$.
When $k=3$, we get the following identities as instances:
\begin{align}
\sum_{i,j\geq 0} \frac{q^{i^2+2ij+2j^2}}{(q;q)_i(q^2;q^2)_j}=\frac{(q^3,q^3,q^6;q^6)_\infty}{(q;q)_\infty}, \label{Bressoud-1} \\
\sum_{i,j\geq 0} \frac{q^{i^2+2ij+2j^2+j}}{(q;q)_i(q^2;q^2)_j} =\frac{(q^2,q^4,q^6;q^6)_\infty}{(q;q)_\infty}, \label{Bressoud-2} \\
\sum_{i,j\geq 0} \frac{q^{i^2+2ij+2j^2+i+2j}}{(q;q)_i(q^2;q^2)_j} =\frac{(q,q^5,q^6;q^6)_\infty}{(q;q)_\infty}. \label{Bressoud-3}
\end{align}

To classify identities according to their shapes, we shall adopt a notion from \cite{Wang2021}. Let $n_1,\ldots,n_k$ be positive integers whose greatest common divisor equals 1. Let $t(i_1,\ldots,i_k)$ denote some integer-valued functions, and let $Q(i_1,\dots,i_k)$ be a rational polynomial in variables $i_1,\dots,i_k$. If the sum side of a Rogers-Ramanujan type identity is a mixed sum of
$$\sum_{(i_1,\dots,i_k)\in \mathbb{N}^k}\frac{(-1)^{t(i_1,\dots,i_k)}q^{Q(i_1,\dots,i_k)}}{(q^{n_1};q^{n_1})_{i_1}\cdots (q^{n_k};q^{n_k})_{i_k}},$$
then we call it as an identity of index $(n_1,\dots,n_k)$. With this convention, we see that the Andrews--Gordon identity is of index $(1,1,\dots,1)$ and the Bressoud identity is of index $(1,\dots, 1,2)$.

In the last twenty years, many Rogers--Ramanujan type identities involving double, triple and quadruple sums were discovered. For instance, Sills \cite{Sills2003} discovered many identities of indices $(1,1)$, $(1,2)$, $(1,1,1)$, $(1,2,2)$ and $(1,1,1,2)$. A sample of his results are
\begin{align}
&\sum_{i,j,k\geq 0}\frac{q^{i^2/2+3j^2+2k^2+2ij+2ik+4jk+3i/2+3j+2k}}{(q;q)_i(q^2;q^2)_j(q^2;q^2)_k}=\frac{(q,q^7,q^8;q^8)_\infty}{(q;q)_\infty}, \quad \text{(\cite[Eq.\ (5.35)]{Sills2003})} \label{intro-Sills-1} \\
&\sum_{i,j,k\geq 0}\frac{q^{i^2/2+3j^2+2k^2+2ij+2ik+4jk+i/2+j}}{(q;q)_i(q^2;q^2)_j(q^2;q^2)_k}=\frac{(q^3,q^5,q^8;q^8)_\infty}{(q;q)_\infty}, \quad \text{(\cite[Eq.\ (5.37)]{Sills2003})} \label{intro-Sills-2} \\
&\sum_{i,j,k,l\geq 0}\frac{(-1)^lq^{i^2+j^2+k^2+6l^2+2ij+2ik+2jk+4il+4jl+4kl+j+2k+2l}}{(q^2;q^2)_i(q^2;q^2)_j(q^2;q^2)_k(q^4;q^4)_l}
=\frac{1}{(q;q^2)_\infty}, \nonumber \\
&\quad \quad \quad \quad \quad \quad \quad \quad \quad \quad \quad \quad \quad \quad \quad \quad  \quad \quad \quad \quad \quad \quad \quad \text{(\cite[Eq.\ (5.69)]{Sills2003})} \label{03.10-3} \\
&\sum_{i,j,k,l\geq 0}\frac{(-1)^iq^{i^2+j^2+k^2+6l^2+2ij+2ik+2jk+4il+4jl+4kl+j+2k+2l}}{(q^2;q^2)_i(q^2;q^2)_j(q^2;q^2)_k(q^4;q^4)_l} \nonumber \\
&\quad \quad \quad =\frac{(q,q^7,q^8,q^{9},q^{15};q^{16})_\infty}{(q^2,q^3,q^4,q^4,q^5,q^{11},q^{12},q^{12},q^{13},q^{14};q^{16})_\infty}. \quad \text{(\cite[Eq.\ (5.68)]{Sills2003})}  \label{intro-Sills-3}
\end{align}

Kanade and Russell \cite{KR-2015,KR-2019} disovered many conjectural identities of indices $(1,4,6)$ and $(1,2,3)$, and most of them have been confirmed by Bringmann, Jennings-Shaffer and Mahlburg \cite{Bringmann} and Rosengren \cite{Rosengren}.
Mc Laughlin \cite{Laughlin} proved some identities of indices such as $(1,1)$,  $(1,2,3)$, $(1,4,6)$ and $(1,2,3,4)$.
Cao and Wang \cite{Cao-Wang} proved some identities of indices $(1,1),(1,2)$, $(1,1,1)$, $(1,1,2)$, $(1,1,3)$, $(1,2,2)$, $(1,2,3)$ and $(1,2,4)$. For example, they proved that for any $u,v,a$ (see \cite[Theorems 3.1 and 3.8]{Cao-Wang}),
\begin{align}
    \sum_{i,j\geq 0} \frac{(-1)^{i+j}u^iv^jq^{((i-j)^2-i-j)/2}}{(q;q)_i(q;q)_j}&=\frac{(u,v;q)_\infty}{(uv/q;q)_\infty}, \label{eq-Cao-Wang-Thm31} \\
    \sum_{i,j\geq 0} \frac{(-1)^iq^{i^2+2ij+2j^2-i-j}a^{i+j}}{(q;q)_i(q^2;q^2)_j}&=(a;q^2)_\infty, \label{Cao-Wang-3.8-1} \\
    \sum_{i,j\geq 0} \frac{(-1)^iq^{i^2+2ij+2j^2-i-j}a^{i+2j}}{(q;q)_i(q^2;q^2)_j}&=(a;q)_\infty. \label{Cao-Wang-3.8-2}
\end{align}

Some identities of index $(1,2)$ were derived by Andrews \cite{Andrews2019} and Takigiku and Tsuchioka \cite{Takigiku2019}. Andrews and Uncu \cite{Andrews-Uncu} proved an identity of index $(1,3)$ and also proposed a conjectural identity of similar shape. Two proofs of their conjecture were subsequently given by Chern \cite{Chern} and Wang \cite{Wang2021}. Kur\c{s}ung\"{o}z \cite{Kursungoz} derived four identities of index $(1,4)$. Moreover, based on the work of Kanade and Russell \cite{KR-2015}, five conjectural identities of index $(1,3)$ were presented in \cite[Conjecture 6.1]{Kursungoz-AnnComb}. For instance, one of them states that
\begin{align}
\sum_{i,j\geq 0}\frac{q^{i^2+3j^2+3ij}}{(q;q)_i(q^3;q^3)_j}=\frac{1}{(q,q^3,q^6,q^8;q^9)_\infty}. \label{K-conj-1}
\end{align}

There is some particular interest on identities of index $(1,1,\dots,1)$. Given a positive integer $r$, Nahm's problem is to find all positive definite matrix $A\in M_n(\mathbb{Q})$, rational vector $B\in \mathbb{Q}^r$ and rational number $C\in \mathbb{Q}$ such that
$$f_{A,B,C}(q):=\sum_{n=(n_1,\dots,n_r)^\mathrm{T}\in (\mathbb{Z}_{\geq 0})^r} \frac{q^{\frac{1}{2}n^\mathrm{T} An+n^\mathrm{T} B+C}}{(q;q)_{n_1}\cdots (q;q)_{n_r}}$$
becomes a modular form. Such $(A,B,C)$ is called a modular triple in the literature. Since $C$ is uniquely determined by $A$ and $B$ when $(A,B,C)$ is a modular triple, we shall omit it and call $(A,B)$ as a modular pair.

Zagier \cite{Zagier} provided many possible modular triples when the rank $r=2,3$.  For example, Zagier \cite[p.\ 46]{Zagier} proved the following identity:
\begin{align}\label{eq-Zagier}
\sum_{i,j\geq 0}\frac{q^{\frac{\alpha}{2}i^2+(1-\alpha)ij+\frac{\alpha}{2}j^2+\alpha \nu i -\alpha \nu j}}{(q;q)_i(q;q)_j}=\frac{(-q^{\frac{\alpha}{2}+\alpha \nu},-q^{\frac{\alpha}{2}-\alpha \nu},q^{\alpha};q^{\alpha})_\infty}{(q;q)_\infty}, \quad \forall \alpha>0, \nu \in \mathbb{Q}.
\end{align}
This implies that
\begin{align}\label{intro-triple}
A=\begin{pmatrix}
\alpha & 1-\alpha \\ 1-\alpha &\alpha
\end{pmatrix} \quad \text{and} \quad B=\begin{pmatrix} \alpha \nu \\ -\alpha \nu \end{pmatrix}
\end{align}
form a modular pair. It was pointed out in \cite{Wang-rank2} that the above identity is a special instance of a general identity found by Cao and Wang (see \cite[Theorem 3.4]{Cao-Wang}). Moreover, from another general identity \cite[Theorem 3.6]{Cao-Wang}, we know that $B=\left(\begin{smallmatrix} \alpha \nu \\ -\alpha \nu-1 \end{smallmatrix}\right)$ is the vector part of another modular pair with the same $A$.

All of Zagier's examples have now been confirmed in the literature.  For instance,  Zagier \cite{Zagier}, Vlasenko and Zwegers \cite{VZ},   Calinescu, Milas and Penn \cite{CMP},  Wang \cite{Wang-rank2} and Cao, Rosengren and Wang \cite{Cao-Rosengren-Wang} proved some identities of index $(1,1)$, confirming all of Zagier's rank two examples. Wang \cite{Wang-rank3} proved a number of identities of index  $(1,1,1)$. This  togehether with Zagier's results \cite{Zagier} proves all of Zagier's rank three examples.  By the way, Milas and Wang \cite{Milas-Wang} proved some identities of index $(1,1,1)$ and in particular confirmed partially a conjecture in \cite{CMP}.

Motivated by the above works, the main object of this paper is to establish more Rogers--Ramanujan type identities involving double, triple and quadruple sums. We use Maple to search for identities of the form
\begin{align}\label{id-form}
\sum_{(i_1,\dots,i_k)\in \mathbb{N}^k} \frac{(-1)^{t(i_1,\dots,i_k)}q^{Q(i_1,\dots,i_k)}}{(q^{n_1};q^{n_1})_{i_1}\cdots (q^{n_k};q^{n_k})_{i_k}}=q^C\prod\limits_{n=1}^\infty (1-q^n)^{a_n}
\end{align}
where $C$ is a rational scalar, $Q(i_1,\dots,i_k)$ (resp.\ $t(i_1,\dots,i_k)$) is a rational quadratic (resp.\ linear) polynomial in the variables $i_1,\dots,i_k$, and $\{a_n\}$ is some bounded sequence of integers. The $q$-series package developed by Garvan \cite{Garvan} is employed in predicating the product form of the sum sides. Of course, it is unlikely to exhaust all such identities by Maple. Our aim is to find as more new identities as possible.

After an extensive search, we find about 100 new identities which satisfies the additional requirement that the product side is modular (after multiplying some suitable factor of $q$). For example, for any $\alpha>0$, we prove that (see Theorem \ref{thm-11-alpha})
\begin{align}\label{intro-id-alpha}
\sum_{i,j\geq 0} \frac{q^{\frac{\alpha}{2}i^2+(1-\alpha)ij+\frac{\alpha}{2}j^2+(1-\frac{\alpha}{2})i+\frac{\alpha}{2}j}}{(q;q)_i(q;q)_j}=\frac{(q^{2\alpha};q^{2\alpha})_\infty^2}{(q;q)_\infty (q^\alpha;q^\alpha)_\infty}.
\end{align}
This looks closely to Zagier's identity \eqref{eq-Zagier} but does not follow from it or the two identities in \cite[Theorems 3.4 and 3.6]{Cao-Wang}. The identity \eqref{intro-id-alpha} also means that the same matrix $A$ in \eqref{intro-triple} and the vector $B=\left( \begin{smallmatrix} 1-\alpha/2 \\ \alpha/2 \end{smallmatrix}\right)$ form a new modular pair.

Just as the above example, some of our results give new companions to some known identities.  For example, we find a new companion to \eqref{Cao-Wang-3.8-1} and \eqref{Cao-Wang-3.8-2} (see Theorem \ref{thm-4-new}):
\begin{align}
\sum_{i,j\geq 0}\frac{(-1)^iq^{i^2+2ij+2j^2-i-2j}a^{i+j}}{(q;q)_i(q^2;q^2)_j}&=(aq;q^2)_\infty.\label{intro-(1,2)-new-1}
\end{align}

We now give some examples of our results involving triple and quadruple sums. We proved many new companions to Sills' identities \eqref{intro-Sills-1} and \eqref{intro-Sills-2} such as
\begin{align}
&\sum_{i,j,k\geq 0}\frac{q^{i^2+6j^2+4k^2+4ij+4ik+8jk-i-2j}}{(q^2;q^2)_i(q^4;q^4)_j(q^4;q^4)_k}= \frac{2(q^6,q^{10},q^{16};q^{16})_\infty}{(q^2;q^2)_\infty}, \label{(1,2,2)-new-3} \\
&\sum_{i,j,k\geq 0}\frac{(-1)^jq^{i^2+6j^2+4k^2+4ij+4ik+8jk+2k}a^ib^{j+k}}{(q^2;q^2)_i(q^4;q^4)_j(q^4;q^4)_k}=(-aq;q^2)_\infty, \label{(1,2,2)-new-5} \\
&\sum_{i,j,k\geq 0}\frac{(-1)^jq^{i^2+6j^2+4k^2+4ij+4ik+8jk+i-4j-4k}}{(q^2;q^2)_i(q^4;q^4)_j(q^4;q^4)_k}= \frac{2}{(q^4,q^6,q^8;q^{12})_\infty}.
\end{align}
Meanwhile, we also find many companions to \eqref{intro-Sills-3} including
\begin{align}
&\sum_{i,j,k,l\geq 0}\frac{(-1)^iq^{i^2+j^2+k^2+6l^2+2ij+2ik+2jk+4il+4jl+4kl+2i+k+2l}}{(q^2;q^2)_i(q^2;q^2)_j(q^2;q^2)_k(q^4;q^4)_l}\nonumber \\
&\quad \quad \quad =\frac{(q^3,q^5,q^8,q^{11},q^{13};q^{16})_\infty}{(q,q^4,q^4,q^6,q^{7},q^{9},q^{10},q^{12},q^{12},q^{15};q^{16})_\infty}, \label{03.10-10} \\
&\sum_{i,j,k,l\geq 0}\frac{(-1)^iq^{i^2+j^2+k^2+6l^2+2ij+2ik+2jk+4il+4jl+4kl-j-2l}}{(q^2;q^2)_i(q^2;q^2)_j(q^2;q^2)_k(q^4;q^4)_l}=\frac{2(q^6,q^{10},q^{16};q^{16})_\infty}{(q^2;q^2)_\infty}, \label{03.10-13} \\
&\sum_{i,j,k,l\geq 0} \frac{(-1)^jq^{i^2+j^2+k^2+6l^2+2ij+2ik+2jk+4il+4jl+4kl+k}a^{i+j+2l}}{(q^2;q^2)_i(q^2;q^2)_j(q^2;q^2)_k(q^4;q^4)_l}=(-q^2;q^2)_\infty (-a^2q^4;q^8)_\infty. \label{1112-parameter-1}
\end{align}

Without the modular restriction for the product side, there are many more identities of the form \eqref{id-form}. For instance, we have
\begin{align}
\sum_{i,j,k\geq 0} \frac{(-1)^iq^{i^2+2j^2+k^2+2ij+2ik+2jk+3i}}{(q^2;q^2)_i(q^2;q^2)_j(q^2;q^2)_k}&=\frac{(q^6;q^4)_\infty}{(q;q^4)_\infty(q^7;q^4)_\infty},  \label{introd-nonmodular-1} \\
\sum_{i,j,k\geq 0}\frac{(-1)^kq^{2i^2+2j^2+9k^2+2ij+6ik+6jk+i+6j+6k}}{(q^2;q^2)_i(q^2;q^2)_j(q^6;q^6)_k}&=\frac{1+q^3}{(1-q^{9})(q^5,q^{7};q^6)_\infty}.\label{intro-nonmodular-2}
\end{align}
We will give some more examples in Section \ref{sec-remarks}.

The rest of this paper is organized as follows. We collect some auxiliary identities in Section \ref{sec-pre}. In Section \ref{sec-double} we prove some double sum identities of indices $(1,1)$, $(1,2)$ and $(1,4)$. In Section \ref{sec-triple} we prove some triple sum identities of indices $(1,1,1)$,  $(1,1,3)$, $(1,2,2)$ and $(1,2,4)$. Section \ref{sec-quadruple} includes a group of quadruple sum identities of index $(1,1,1,2)$. Finally, in Section \ref{sec-remarks} we present some identities in which the product side is non-modular. We then discuss some byproducts of our results including some single sum Rogers-Ramanujan type identities derived from our new identities. We also present a conjectural identity of index $(1,3)$, which can be regarded as a new companion of \eqref{K-conj-1}. It should be emphasized that the methods of proving our identities are not unique. For example, we can prove some identities by direct summation, constant term method or integral method. We give brief discussions of alternative methods using several examples in the last section.

\section{Preliminaries}\label{sec-pre}
Throughout this paper we will denote $\zeta_n=e^{2\pi i/n}$.

For any series $f(z)=\sum_{n=-\infty}^\infty a(n)z^n$, we define the constant term extractor
\begin{align}\label{CT-defn}
    \CT f(z)=a(0).
\end{align}
Note that for any nonzero integers $\alpha$ and $\beta$, we have
$$\CT f(\alpha z^\beta)=\CT f(z).$$
When using the constant term method, the first step in our deductions of identities is to express the sum sides as constant terms of some infinite products. For this we will rely on Euler's identities \cite[Corollary 2.2]{Andrews-book}
\begin{align}
&\sum_{n=0}^\infty\frac{z^n}{(q;q)_{n}}=\frac{1}{(z;q)_\infty}, \quad |z|<1, \label{Euler-1} \\
&\sum_{n=0}^\infty\frac{q^{(n^2-n)/2}z^n}{(q;q)_{n}}=(-z;q)_\infty,  \label{Euler-2}
\end{align}
and the Jacobi triple product identity \cite[Theorem 2.8]{Andrews-book}
\begin{align}
	&\sum_{n=-\infty}^\infty(-1)^nq^{(n^2-n)/2}z^n=(q,z,q/z;q)_\infty, \quad z\neq 0. \label{Jacobi}
\end{align}
Euler's identities are corollaries of the $q$-binomial theorem \cite[Theorem 2.1]{Andrews-book}:
\begin{align}\label{eq-qbinomial}
    \sum_{n=0}^\infty \frac{(a;q)_nz^n}{(q;q)_n}=\frac{(az;q)_\infty}{(z;q)_\infty}, \quad |z|<1.
\end{align}
We define the $q$-binomial coefficient or Gaussian coefficient as
$${n\brack m}={n \brack m}_q:=\left\{\begin{array}{ll}
\frac{(q;q)_n}{(q;q)_m (q;q)_{n-m}}, & 0\leq m \leq n, \\
0, & \text{otherwise}. \end{array}\right.$$
Now we can state a finite form of \eqref{Euler-2} \cite[p.\ 36, Theorem 3.3]{Andrews-book}:
\begin{align}\label{finite-Jacobi}
    (-z;q)_n=\sum_{k=0}^n {n\brack k} z^kq^{k(k-1)/2}.
\end{align}

We need the $q$-Gauss summation formula \cite[Eq.\ (\uppercase\expandafter{\romannumeral2}.8)]{GR-book}:
\begin{align}\label{eq-Gauss}
{}_2\phi_1 \bigg(\genfrac{}{}{0pt}{} {a,b}{c};q,\frac{c}{ab}\bigg)=\frac{(c/a,c/b;q)_\infty}{(c,c/ab;q)_\infty}.
\end{align}

Another key step in our proofs is to reduce multiple sums to some single sums, and then we can use some known identities in the literature. For our purposes, the following identities will be employed:
\begin{align}
        &\sum_{n=0}^\infty \frac{q^{n^2+n}(-q^{-1};q^2)_n}{(q^2;q^2)_n}=\frac{1}{(q,q^5,q^6;q^8)_\infty},\quad \text{(\cite[Eq.\ (2.22)]{Göllnitz})} \label{Gollnitz-(2.22)}\\
         &\sum_{n=0}^\infty \frac{{(-1)^n}q^{n^2+2n}(q;q^2)_n}{(q^4;q^4)_n}=\frac{(q^3;q^3)_\infty(q^{12};q^{12})_\infty}{(q^4;q^4)_\infty(q^6;q^6)_\infty},\quad  \text{(\cite[Entry\ 4.2.11]{Lost2})} \label{Wang-(1.22)} \\
        &\sum_{n=0}^\infty \frac{(-1)^nq^{n^2}(q;q^2)_n}{(q^2;q^2)_n^2}=\frac{(q;q^2)_\infty}{(q^2;q^2)_\infty},\quad  \text{(\cite[Entry\ 4.2.6]{Lost2})} \label{Ramanujan[10,Entry 4.2.6]}\\
        &\sum_{n=0}^\infty\frac{(-q;q^2)_n q^{n(n+1)}}{(q^2;q^2)_n}=\frac{1}{(q^2,q^3,q^7;q^8)_\infty},\quad \text{(\cite[Eq.\ (2.24)]{Göllnitz})} \label{Gollnitz-(2.24)}\\
        &\sum_{n=0}^\infty\frac{(-q;q^2)_n q^{n(n+2)}}{(q^4;q^4)_n}=\frac{(q^6;q^{12})_\infty}{(q^3,q^4,q^8,q^9;q^{12})_\infty},\quad  \text{(\cite[Entry\ 5.3.7]{Lost2})} \label{Ramanujan [10,Entry 5.3.7]}\\
        &\sum_{n=0}^\infty\frac{(-1)^n q^{n(3n+2)}}{(-q;q^2)_{n+1}(q^4;q^4)_n}=\frac{(q,q^4,q^5;q^5)_\infty}{(q^2;q^2)_\infty},\quad  \text{(\cite[Entry\ 11.2.7]{Lost1})} \label{Ramanujan [10,Entry 11.2.7]}\\
        &\sum_{n=0}^\infty\frac{q^{{n(3n+1)}/2}}{(q;q)_n(q;q^2)_{n+1}}=\frac{(q^4,q^6,q^{10};q^{10})_\infty}{(q;q)_\infty},\quad  \text{(\cite[ p.\ 330\ (2),\ line\ 1]{Rogers1917})} \label{(Rogers [37, p. 330 (2), line 1])} \\
        &\sum_{n=0}^\infty\frac{(a;q)_nq^{{n(n-1)}/2}\gamma^n}{(b;q)_n(q;q)_{n}}=\frac{(-\gamma;q)_\infty}{(b;q)_\infty}\sum_{n=0}^\infty\frac{({-a\gamma}/b;q)_nq^{{n(n-1)}/2}(-b)^n}{(-\gamma;q)_n(q;q)_{n}},\nonumber \\
        &\qquad \qquad \qquad \qquad \qquad \qquad \qquad \qquad  \text{(\cite[Eq.\ (6.1.3)]{MSZ2008})} \label{(Laughlin-(6.1.3))} \\
        &\sum_{n=0}^\infty\frac{q^{n^2-n}}{(q;q)^2_n}=\frac{2}{(q;q)_\infty}, \label{5yue14ri}  \\
	&\sum_{n=0}^\infty\frac{(-1)^n(-q;q^2)_n q^{n^2}}{(q^4;q^4)_n}=(q;q^2)_\infty(q^2;q^4)_\infty,\quad  \text{(S.\ 4)} \label{Slater4}\\
	&\sum_{n=0}^\infty\frac{q^{2n^2+n}}{(q;q)_{2n+1}}=(-q;q)_\infty,\quad  \text{(S.\ 5)} \label{Slater5}\\
	&\sum_{n=0}^\infty\frac{(-q;q)_n q^{n(n-1)/2}}{(q;q)_n}=\frac{(q^4;q^4)_\infty}{(q;q)_\infty}+\frac{(-q;q^2)_\infty}{(q;q^2)_\infty},\quad  \text{(S.\ 13)} \label{Slater13}\\
	&\sum_{n=0}^\infty\frac{(-1)^n q^{n(3n-2)}}{(-q;q^2)_n(q^4;q^4)_n}=\frac{(q,q^4,q^5;q^5)_\infty}{(q^2;q^2)_\infty},\quad  \text{(S.\ 15)} \label{Slater15}\\
	&\sum_{n=0}^\infty\frac{(-1)^n q^{3n^2}}{(-q;q^2)_n(q^4;q^4)_n}=\frac{(q^2,q^3,q^5;q^5)_\infty}{(q^2;q^2)_\infty},\quad  \text{(S.\ 19)} \label{Slater19}\\
	&\sum_{n=0}^\infty\frac{(-q;q^2)_n q^{n^2}}{(q^4;q^4)_n}=\frac{(q^3,q^3,q^6;q^6)_\infty(-q;q^2)_\infty}{(q^2;q^2)_\infty},\quad  \text{(S.\ 25)} \label{Slater25}\\
	&\sum_{n=0}^\infty\frac{(-q;q^2)_n q^{2n(n+1)}}{(q;q^2)_{n+1}(q^4;q^4)_n}=\frac{(-q,-q^5,q^6;q^6)_\infty}{(q^2;q^2)_\infty},\quad  \text{(S.\ 27)} \label{Slater27}\\
 &\sum_{n=0}^\infty \frac{(q;q^2)_nq^{2n^2}}{(-q;q)_{2n}(q^2;q^2)_n}=\frac{(q^3,q^3,q^6;q^6)_\infty}{(q^2;q^2)_\infty}, \quad \text{\cite[Eq.\ (2.13)]{BMS}}\label{eq-BMS} \\
	&\sum_{n=0}^\infty\frac{(-q^2;q^2)_n q^{n(n+1)}}{(q;q)_{2n+1}}=\frac{(-q,-q^5,q^6;q^6)_\infty(-q^2;q^2)_\infty}{(q^2;q^2)_\infty},\quad  \text{(S.\ 28)} \label{Slater28}\\
	&\sum_{n=0}^\infty\frac{(-q;q^2)_n q^{n^2}}{(q;q)_{2n}}=\frac{(-q^2,-q^4,q^6;q^6)_\infty(-q;q^2)_\infty}{(q^2;q^2)_\infty},\quad  \text{(S.\ 29)} \label{Slater29}\\
	&\sum_{n=0}^\infty\frac{ q^{2n(n+1)}}{(-q;q)_{2n+1}(q^2;q^2)_n}=\frac{(q,q^6,q^7;q^7)_\infty}{(q^2;q^2)_\infty},\quad  \text{(S.\ 31)} \label{Slater31}\\
	&\sum_{n=0}^\infty\frac{ q^{2n(n+1)}}{(-q;q)_{2n}(q^2;q^2)_n}=\frac{(q^2,q^5,q^7;q^7)_\infty}{(q^2;q^2)_\infty},\quad  \text{(S.\ 32)} \label{Slater32}\\
	&\sum_{n=0}^\infty\frac{ q^{2n^2}}{(-q;q)_{2n}(q^2;q^2)_n}=\frac{(q^3,q^4,q^7;q^7)_\infty}{(q^2;q^2)_\infty},\quad  \text{(S.\ 33)} \label{Slater33}\\
	&\sum_{n=0}^\infty\frac{(-q;q^2)_n q^{n(n+2)}}{(q^2;q^2)_n}=\frac{1}{(q^3,q^4,q^5;q^8)_\infty},\quad  \text{(S.\ 34)} \label{Slater34}\\
	&\sum_{n=0}^\infty\frac{(-q;q^2)_n q^{n^2}}{(q^2;q^2)_n}=\frac{1}{(q,q^4,q^7;q^8)_\infty},\quad  \text{(S.\ 36)} \label{Slater36}\\
	&\sum_{n=0}^\infty\frac{ q^{2n(n+1)}}{(q;q)_{2n+1}}=\frac{(q^3,q^5,q^8;q^8)_\infty(q^2,q^{14};q^{16})_\infty}{(q;q)_\infty},\quad  \text{(S.\ 38)} \label{Slater38}\\
	&\sum_{n=0}^\infty\frac{ q^{2n^2}}{(q;q)_{2n}}=\frac{(q,q^7,q^8;q^8)_\infty(q^6,q^{10};q^{16})_\infty}{(q;q)_\infty},\quad  \text{(S.\ 39)} \label{Slater39}\\
 &\sum_{n=0}^\infty\frac{q^{(3n^2+3n)/2}}{(q;q)_{n}(q;q^2)_{n+1}}=\frac{(q^2,q^8,q^{10};q^{10})_\infty}{(q;q)_\infty},\quad  \text{(S.\ 44)} \label{Slater44}\\
 &\sum_{n=0}^\infty\frac{q^{(3n^2-n)/2}}{(q;q)_{n}(q;q^2)_{n}}=\frac{(q^4,q^6,q^{10};q^{10})_\infty}{(q;q)_\infty},\quad  \text{(S.\ 46)} \label{Slater46}\\
 &\sum_{n=0}^\infty\frac{ q^{n^2+2n}(-q;q^2)_n}{(q;q)_{2n+1}}=\frac{(q^2,q^{10},q^{12};q^{12})_\infty}{(q;q)_\infty},\quad  \text{(S.\ 50)} \label{Slater50}\\
&\sum_{n=0}^\infty \frac{q^{n^2}(-q;q^2)_n}{(q^2;q^2)_{2n}}=\sum_{n=0}^\infty \frac{q^{n^2}}{(q;q^2)_n(q^4;q^4)_n} \nonumber \\
&\quad \quad \quad \quad =\frac{(q^2;q^2)_\infty (q^{14};q^{14})_\infty (q^3,q^{11},q^{17},q^{25};q^{28})_\infty}{(q;q)_\infty (q^{28};q^{28})_\infty (q^4,q^{12},q^{16},q^{24};q^{28})_\infty}, \quad  \text{(S.\ 117)} \label{Slater117} \\
&\sum_{n=0}^\infty \frac{q^{n^2+2n}(-q;q^2)_n}{(q^2;q^2)_{2n}}=\sum_{n=0}^\infty \frac{q^{n^2+2n}}{(q;q^2)_n(q^4;q^4)_n}\nonumber \\
&\quad \quad \quad \quad =\frac{(q^2;q^2)_\infty (q,q^{13},q^{14};q^{14})_\infty (q^{12},q^{16};q^{28})_\infty}{(q;q)_\infty (q^4;q^4)_\infty}, \quad \text{(S.\ 118)} \label{Slater118} \\
&\sum_{n=0}^\infty \frac{q^{n^2+2n}(-q;q^2)_{n+1}}{(q^2;q^2)_{2n+1}}=\sum_{n=0}^\infty \frac{q^{n^2+2n}}{(q;q^2)_{n+1}(q^4;q^4)_n} \nonumber \\
&\quad \quad \quad \quad = \frac{(q^2;q^2)_\infty (q^5,q^9,q^{14};q^{14})_\infty (q^4,q^{24};q^{28})_\infty}{(q;q)_\infty (q^4;q^4)_\infty}. \quad  \text{(S.\ 119)} \label{Slater119}
\end{align}
Note that \eqref{5yue14ri} follows from the following identity \cite[Eq.\ (6.1.4)]{MSZ2008}:
\begin{align}
    \sum_{n=0}^\infty \frac{(a;q)_nq^{n(n-1)/2}\gamma^n}{(q;q)_n}
    =(-\gamma;q)_\infty \sum_{n=0}^\infty \frac{(-a\gamma)^nq^{n(n-1)}}{(-\gamma;q)_n(q;q)_n}. \label{eq-parameter}
\end{align}
Indeed, by setting $a=q^{-1}$ and $\gamma=-q$ we obtain \eqref{5yue14ri}.

\section{Identities involving double sums}\label{sec-double}
In this section, we present some new double sum identities of indices $(1,1)$, $(1,2)$ and $(1,4)$. We do not find new identities of other indices except one of index $(1,3)$ (see Conjecture \ref{conj-13}). To save space, when the proof is straightforward, we will only give a sketch and omit the details.

\subsection{Identities of index $(1,1)$}
\begin{theorem}\label{thm-11-alpha}
For any $\alpha>0$ we have
\begin{align}\label{id-alpha}
\sum_{i,j\geq 0} \frac{q^{\frac{\alpha}{2}i^2+(1-\alpha)ij+\frac{\alpha}{2}j^2+(1-\frac{\alpha}{2})i+\frac{\alpha}{2}j}}{(q;q)_i(q;q)_j}=\frac{(q^{2\alpha};q^{2\alpha})_\infty^2}{(q;q)_\infty (q^\alpha;q^\alpha)_\infty}.
\end{align}
\end{theorem}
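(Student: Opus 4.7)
The plan is to exploit the fact that the identity must hold for all $\alpha>0$, so that $q^\alpha$ can effectively be treated as an independent variable, and reduce the problem to a classical single-variable identity.

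First I would rewrite the exponent in the factored form
\[
\tfrac{\alpha}{2}i^2+(1-\alpha)ij+\tfrac{\alpha}{2}j^2+\bigl(1-\tfrac{\alpha}{2}\bigr)i+\tfrac{\alpha}{2}j=\tfrac{\alpha}{2}(i-j)(i-j-1)+i(j+1),
\]
so that the $\alpha$-dependence is confined to the single quantity $k:=i-j$. Summing over $j$ with $k$ fixed (handling $k\geq 0$ with $j\geq 0$ directly, and $k<0$ via the substitution $j=-k+m$, $m\geq 0$, $i=m$) yields, after a routine calculation,
\[
\text{LHS}=B_0+\sum_{k\geq 1}\bigl(q^{\frac{\alpha}{2}k(k-1)+k}+q^{\frac{\alpha}{2}k(k+1)}\bigr)B_k,\qquad B_k:=\sum_{j\geq 0}\frac{q^{j^2+(k+1)j}}{(q;q)_{j+k}(q;q)_j}.
\]

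Next I would simplify the product side. Using $(q^{2\alpha};q^{2\alpha})_\infty=(q^\alpha;q^\alpha)_\infty(-q^\alpha;q^\alpha)_\infty$, the right-hand side equals $\frac{(q^{2\alpha};q^{2\alpha})_\infty(-q^\alpha;q^\alpha)_\infty}{(q;q)_\infty}$. Applying Jacobi's triple product \eqref{Jacobi} in base $q^\alpha$ with $z=-q^\alpha$ and then collapsing the resulting bilateral theta series by the symmetry $n\mapsto -n-1$, one obtains
\[
(q^{2\alpha};q^{2\alpha})_\infty(-q^\alpha;q^\alpha)_\infty=\sum_{n\geq 0}q^{\alpha n(n+1)/2},
\]
so the claim is equivalent to
\[
B_0+\sum_{k\geq 1}\bigl(q^{\frac{\alpha}{2}k(k-1)+k}+q^{\frac{\alpha}{2}k(k+1)}\bigr)B_k=\frac{1}{(q;q)_\infty}\sum_{n\geq 0}q^{\alpha n(n+1)/2}.
\]

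Treating $\alpha$ as a free parameter (equivalently, $p:=q^\alpha$ as an independent formal variable), I would now match coefficients of $q^{\alpha n(n+1)/2}=p^{n(n+1)/2}$. On the LHS this exponent is produced exactly by $k=n$ (contributing $B_n$) and by $k=n+1$ (contributing $q^{n+1}B_{n+1}$); all other $p$-exponents on the LHS are non-triangular and so contribute $0$, consistently with the RHS. The whole identity thus reduces to the single family $B_n+q^{n+1}B_{n+1}=\frac{1}{(q;q)_\infty}$ for $n\geq 0$. Using $(q;q)_{j+n+1}=(q;q)_{j+n}(1-q^{j+n+1})$, this combined sum telescopes to $\sum_{j\geq 0}q^{j(j+n+1)}/\bigl((q;q)_j(q;q)_{j+n+1}\bigr)$, which equals $\frac{1}{(q;q)_\infty}$ by the classical Durfee rectangle identity. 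The decisive step is the coefficient matching that strips away the $\alpha$-dependence; once this reduction to a $q$-only family is in place, the remaining identity is standard.
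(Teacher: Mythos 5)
Your proof is correct, but it does not follow the paper's own argument. The paper's proof of Theorem \ref{thm-11-alpha} is a constant-term computation: the double sum is written as $\CT$ of a product of two Euler factors and a Jacobi theta function, massaged into the bilateral sum $\frac{1}{(q;q)_\infty}\sum_{k\geq 0}\sum_{i\in\mathbb{Z}}(-1)^k q^{\binom{i}{2}+(\alpha-1)\binom{i+k}{2}}$, and then evaluated by a symmetrization/folding trick together with the triple product. Your route is instead a direct summation along the diagonals $k=i-j$, and it is essentially a streamlined version of Warnaar's alternative proof recorded in Remark \ref{rem-Warnaar-pq}: both isolate the $\alpha$-dependence into $p^{\binom{i-j}{2}}$ (your factorization of the exponent and your $B_k$ agree with Warnaar's $g_m$ up to reindexing, using the symmetry $B_k=B_{-k}$), and both rest on the Durfee rectangle identity. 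The difference is in how the reduction is finished: Warnaar first solves the recursion $g_m+q^{1-m}g_{m-1}=1/(q;q)_\infty$ completely to get a Lambert-type series for each $g_m$, and then must evaluate a residual double theta sum by a separate pairing argument; you instead observe that each triangular power $p^{n(n+1)/2}$ receives contributions from exactly the two indices $k=n$ and $k=n+1$, so a single application of $B_n+q^{n+1}B_{n+1}=1/(q;q)_\infty$ (which is exactly the Durfee telescoping you describe) plus Gauss's identity $\sum_{n\geq 0}p^{n(n+1)/2}=(p^2;p^2)_\infty(-p;p)_\infty$ closes the argument. This buys a shorter and more self-contained proof than either version in the paper; what the constant-term proof buys in exchange is a uniform template that the authors reuse for most of the other identities in the paper. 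One small point of presentation: the ``coefficient matching'' in $p=q^\alpha$ is only needed heuristically to locate the family $B_n+q^{n+1}B_{n+1}=1/(q;q)_\infty$; once that family is proved, the regrouping of the LHS by triangular powers of $p$ gives the identity for every $p$ directly, so no independence-of-$q^\alpha$ argument is actually required. It would be worth saying this explicitly, since as written the phrase ``treating $\alpha$ as a free parameter'' suggests you need the converse implication.
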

\begin{proof}
Using \eqref{Euler-1}--\eqref{Jacobi}, we have
\begin{align}
\mathrm{LHS}&=\sum_{i,j\geq 0} \frac{q^{\frac{1}{2}i^2+i+\frac{1}{2}j^2+\frac{\alpha-1}{2}(i-j)^2-\frac{\alpha}{2}(i-j)}}{(q;q)_i(q;q)_j} \nonumber \\
&=\mathrm{CT}\left[\sum_{i\geq 0} \frac{q^{\frac{1}{2}i^2+i}z^i}{(q;q)_i} \sum_{j\geq 0} \frac{q^{\frac{1}{2}j^2}z^{-j}}{(q;q)_j} \sum_{k=-\infty}^\infty q^{\frac{\alpha-1}{2}k^2-\frac{\alpha}{2}k}z^{-k} \right] \nonumber \\
&=\mathrm{CT}\left[(-q^{\frac{3}{2}}z;q)_\infty (-q^{\frac{1}{2}}z^{-1};q)_\infty (-q^{-\frac{1}{2}}z^{-1},-q^{\frac{2\alpha-1}{2}}z,q^{\alpha-1};q^{\alpha-1})_\infty  \right] \nonumber \\
&=\mathrm{CT} \left[(-qz;q)_\infty (-qz^{-1};q)_\infty (-z^{-1},-q^{\alpha-1}z,q^{\alpha-1};q^{\alpha-1})_\infty\right] \nonumber \\
&\quad \quad \quad \quad \quad \text{(Here we replaced $z$ by $q^{-\frac{1}{2}}z$)} \nonumber \\
&=\frac{1}{(q;q)_\infty} \mathrm{CT}\left[\frac{1}{1+z} (-z,-q/z,q;q)_\infty (-z^{-1},-q^{\alpha-1}z,q^{\alpha-1};q^{\alpha-1})_\infty  \right] \nonumber \\
&=\frac{1}{(q;q)_\infty} \mathrm{CT} \left[ \left(\sum_{k=0}^\infty (-1)^kz^k \right) \left(\sum_{i=-\infty}^\infty q^{\binom{i}{2}}z^i \right) \left( \sum_{j=-\infty}^\infty q^{(\alpha-1)\binom{j}{2}}z^{-j}\right)\right] \nonumber \\
&=\frac{1}{(q;q)_\infty} \sum_{k=0}^\infty \sum_{j-i=k} (-1)^k q^{\binom{i}{2}+(\alpha-1)\binom{j}{2}} \nonumber \\
&=\frac{1}{(q;q)_\infty} \sum_{k=0}^\infty \sum_{i=-\infty}^\infty (-1)^k q^{\binom{i}{2}+(\alpha-1)\binom{i+k}{2}}.  \label{LHS-new}
\end{align}
Note that
\begin{align}
&\sum_{k=-\infty}^0 \sum_{i=-\infty}^\infty (-1)^k q^{\binom{i}{2}+(\alpha-1)\binom{i+k}{2}} =\sum_{k=0}^\infty \sum_{i=-\infty}^\infty (-1)^k q^{\binom{i}{2}+(\alpha-1)\binom{i-k}{2}} \nonumber \\
&=\sum_{k=0}^\infty \sum_{i=-\infty}^\infty (-1)^k q^{\binom{-i+1}{2}+(\alpha-1)\binom{-i+1-k}{2}}  \quad \text{( $i$ was replaced by $-i+1$)}\nonumber \\
&=\sum_{k=0}^\infty \sum_{i=-\infty}^\infty (-1)^k q^{\binom{i}{2}+(\alpha-1)\binom{i+k}{2}}.
\end{align}
Therefore, we have
\begin{align}
&\sum_{k=0}^\infty \sum_{i=-\infty}^\infty (-1)^k q^{\binom{i}{2}+(\alpha-1)\binom{i+k}{2}} \nonumber \\
&=\frac{1}{2}\left(\sum_{k=-\infty}^\infty \sum_{i=-\infty}^\infty (-1)^kq^{\binom{i}{2}+(\alpha-1)\binom{i+k}{2}}+\sum_{i=-\infty}^\infty q^{\alpha \binom{i}{2}} \right) \nonumber \\
&=\frac{1}{2}\left(\sum_{j=-\infty}^\infty \sum_{i=-\infty}^\infty (-1)^{i+j}q^{\binom{i}{2}+(\alpha-1)\binom{j}{2}}+\sum_{i=-\infty}^\infty q^{\alpha \binom{i}{2}} \right) \nonumber \\
&=\frac{1}{2}\left( \Big(\sum_{i=-\infty}^\infty (-1)^iq^{\binom{i}{2}} \Big) \Big( \sum_{j=-\infty}^\infty (-1)^jq^{(\alpha-1)\binom{j}{2}} \Big)+\sum_{i=-\infty}^\infty q^{\alpha \binom{i}{2}} \right) \nonumber \\
&=\frac{1}{2}(q^\alpha,-1,-q^\alpha;q^\alpha)_\infty =\frac{(q^{2\alpha};q^{2\alpha})_\infty^2}{(q^\alpha;q^\alpha)_\infty}. \label{LHS-result}
\end{align}
Substituting \eqref{LHS-result} into \eqref{LHS-new}, we obtain the desired identity.
\end{proof}

\begin{rem}\label{rem-Warnaar-pq}
It is easy to see that Theorem \ref{thm-11-alpha} is equivalent to the identity
\begin{align}\label{eq-general-pq-id}
   \sum_{i,j\geq 0} \frac{p^{\binom{i-j}{2}} q^{ij+i}}{(q;q)_i(q;q)_j}
   =\frac{(p^2;p^2)_{\infty}(-p;p)_{\infty}}{(q;q)_{\infty}}, \quad \text{for $|p|,|q|<1$}.
\end{align}
Warnaar \cite{Warnaar} shared with us the following proof for \eqref{eq-general-pq-id}, which does not use the constant term method. 
\begin{proof}[Warnaar's proof of \eqref{eq-general-pq-id}] Replacing $(i,j)$ by  $(n,n-m)$, the double sum becomes
\begin{align} 
\sum_{m=-\infty}^{\infty} p^{\binom{m}{2}} g_m(q) \quad \text{where} \quad g_m(q):=\sum_{n=0}^{\infty}   \frac{q^{n(n-m+1)}}{(q;q)_n(q;q)_{n-m}}.
\end{align}
Here we adopt the standard convention that $1/(q;q)_k=0$ for $k$ a negative integer. Note that $g_m(q)=1+O(q)$ for $m\leq 0$ and $g_m(q)=q^m(1+O(q))$ for $m\geq 0$.

We claim that
\begin{align}\label{eq-W-claim}
g_m(q)=\frac{1}{(q;q)_{\infty}}\sum_{n=0}^{\infty}(-1)^n q^{\binom{n+1}{2}+(n+1)m}.
\end{align}
The Durfee rectangle identity (a special case of $q$-Chu-Vandermonde) says that
$$
\sum_{n=0}^{\infty}   \frac{q^{n(n-m)}}{(q;q)_n(q;q)_{n-m}} = \frac{1}{(q;q)_{\infty}},
$$
and what we have here is a slight variation this result:
\begin{align}
&g_m(q)=\sum_{n=0}^{\infty}   \frac{q^{n(n-m)}(q^n-1+1)}{(q;q)_n(q;q)_{n-m}}
=\sum_{n=0}^{\infty}   \frac{q^{n(n-m)}}{(q;q)_n(q;q)_{n-m}}-
 \sum_{n=1}^{\infty}   \frac{q^{n(n-m)}}{(q;q)_{n-1}(q;q)_{n-m}} \nonumber \\
&=\frac{1}{(q;q)_{\infty}}-q^{1-m} g_{m-1}(q).
\end{align}
In other words, 
\begin{align*}
g_m(q)+q^{1-m}g_{m-1}(q)=\frac{1}{(q;q)_{\infty}}.
\end{align*}
This implies that 
\begin{align}\label{g-rec-add}
g_m(q)=(-1)^k q^{\binom{k}{2}+mk}g_{m+k}(q)+\frac{1}{(q;q)_{\infty}}\sum_{n=0}^{k-1}(-1)^n q^{\binom{n+1}{2}+(n+1)m},
\end{align}
where $k\geq 0$.
If we take the $k\rightarrow \infty$ limit of \eqref{g-rec-add} we get \eqref{eq-W-claim}.

By \eqref{eq-W-claim} we know that \eqref{eq-general-pq-id} is reduced to proving that
\begin{align}\label{id-theta-add}
   \sum_{m=-\infty}^{\infty}\sum_{n=0}^{\infty}
            (-1)^n p^{\binom{m}{2}} q^{\binom{n+1}{2}+(n+1)m}
   = (p^2;p^2)_{\infty}(-p;p)_{\infty}.
\end{align}
This can be done in a way analogous to \eqref{LHS-result}.
Let
 \begin{align}
   f_m:=
     \sum_{n=0}^{\infty}(-1)^n p^{\binom{m}{2}} q^{\binom{n+1}{2}+(n+1)m}.
\end{align}
   Then
\begin{align}
  & f_m+f_{1-m}
     = (p/q)^{\binom{m}{2}}
        \left(\sum_{n=0}^{\infty}(-1)^n q^{\binom{n+m+1}{2}}
         +\sum_{n=0}^{\infty}(-1)^n q^{\binom{n-m+2}{2}}\right) \nonumber \\
&= (-1)^m (p/q)^{\binom{m}{2}} \Big(
       \sum_{n=m}^{\infty} (-1)^n q^{\binom{n+1}{2}}-\sum_{n=1-m}^{\infty}  (-1)^n q^{\binom{n+1}{2}}\Big)
     = p^{\binom{m}{2}}.
\end{align}
Therefore, the left side of \eqref{id-theta-add} becomes
\begin{align*}
\sum_{m=0}^\infty (f_{m}+f_{1-m})=\sum_{m=0}^\infty p^{\binom{m}{2}}=(p^2;p^2)_\infty (-p;p)_\infty. \quad \quad \qedhere
\end{align*}
\end{proof}
\end{rem}

\begin{theorem}\label{thm-3.2}
We have
\begin{align}
\sum_{i,j\geq 0} \frac{q^{i^2+j^2+2ij+2i+j}}{(q^2;q^2)_i(q^2;q^2)_j}&=\frac{1}{(q^2,q^3;q^5)_\infty}.\label{(2,2)-6}
\end{align}
\end{theorem}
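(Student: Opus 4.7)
The plan is to reduce the double sum to a single-sum identity already in the preliminaries, namely Slater's identity \eqref{Slater16}. The first step is to perform the inner summation over $j$, treating $i$ as a parameter. The $j$-dependent part of the exponent is
\[
j^2 + 2ij + j = 2\binom{j}{2} + (2i+2)j,
\]
so the inner sum fits Euler's identity \eqref{Euler-2} with base $q^2$ and $z = q^{2i+2}$, yielding
\[
\sum_{j\geq 0}\frac{q^{j^2+2ij+j}}{(q^2;q^2)_j} = (-q^{2i+2};q^2)_\infty.
\]

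The second step is to factor out the $i$-independent piece $(-q^2;q^2)_\infty$ using $(-q^{2i+2};q^2)_\infty = (-q^2;q^2)_\infty/(-q^2;q^2)_i$, and then simplify the denominator via the elementary identity $(q^2;q^2)_i(-q^2;q^2)_i = (q^4;q^4)_i$. After these two moves the left-hand side becomes
\[
(-q^2;q^2)_\infty \sum_{i\geq 0}\frac{q^{i^2+2i}}{(q^4;q^4)_i}.
\]

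The final step is to recognize the remaining single sum as exactly Slater's identity (S.\ 16), displayed as \eqref{Slater16}, which evaluates it as $1/\bigl((q^2,q^3;q^5)_\infty(-q^2;q^2)_\infty\bigr)$. The factor $(-q^2;q^2)_\infty$ cancels, giving the right-hand side of \eqref{(2,2)-6}.

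I do not expect any genuine obstacle in this argument: the only nontrivial observation is the decomposition of the quadratic form as $n^2+n$ plus an $i$-only contribution (after substituting $n=i+j$), which is equivalent to the rewriting $j^2+2ij+j = 2\binom{j}{2}+(2i+2)j$ that enables the clean application of \eqref{Euler-2}. Everything else is bookkeeping plus invocation of the tabulated Slater identity.
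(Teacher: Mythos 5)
Your proof is correct and is exactly the route the paper takes (the paper only sketches it): sum over $j$ with Euler's identity \eqref{Euler-2}, pull out $(-q^2;q^2)_\infty$, combine $(q^2;q^2)_i(-q^2;q^2)_i=(q^4;q^4)_i$, and invoke Slater's identity \eqref{Slater16}. All the intermediate computations check out.
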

\begin{proof}
Summing over $j$ using \eqref{Euler-2} first, the identity can be proved by using \eqref{Slater16}. We omit the details.
\end{proof}

\begin{theorem}\label{thm-3.3}
    We have
    \begin{align}
    \sum_{i,j\geq 0} \frac{(-1)^iq^{j^2+2ij+i+2j}}{(q^2;q^2)_i(q^2;q^2)_j}&=\frac{(q,q^7,q^9,q^{15};q^{16})_\infty}{(q^2,q^4,q^6,q^{10},q^{12},q^{14};q^{16})_\infty},\label{(2,2)-9}\\
        \sum_{i,j\geq 0} \frac{(-1)^iq^{j^2+2ij+i}}{(q^2;q^2)_i(q^2;q^2)_j}&=\frac{(q^3,q^5,q^{11},q^{13};q^{16})_\infty}{(q^2,q^4,q^6,q^{10},q^{12},q^{14};q^{16})_\infty},\label{(2,2)-10}\\
\sum_{i,j\geq 0} \frac{(-1)^iq^{j^2/2+ij+i-j/2}}{(q;q)_i(q;q)_j}&=\frac{1}{(q^2;q^4)_\infty}+\frac{(q^2;q^4)_\infty}{(q;q^2)_\infty},\label{(2,2)-11}\\
\sum_{i,j\geq 0}\frac{q^{j^2+2ij+j}a^i}{(q^2;q^2)_i(q^2;q^2)_j}&=\frac{1}{(a,q^2;q^4)_\infty}. \label{thm16-(2,2)}
\end{align}
\end{theorem}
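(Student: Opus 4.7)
The plan is to imitate the strategy used for Theorem \ref{thm-3.2}: in each of the four identities, sum one of the two variables via Euler's identities \eqref{Euler-1}--\eqref{Euler-2}, and then match the remaining single sum to a known identity from Section \ref{sec-pre}.

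For \eqref{(2,2)-9}, \eqref{(2,2)-10} and \eqref{(2,2)-11}, the variable $i$ enters the summand only through $(-1)^{i}q^{(\text{linear in }i)}/(q^2;q^2)_i$ (or $(q;q)_i$), so summing over $i$ first is a geometric series via \eqref{Euler-1}. For \eqref{(2,2)-9} the $i$-sum is $\sum_{i\geq 0}(-q^{2j+1})^{i}/(q^2;q^2)_i=1/(-q^{2j+1};q^2)_\infty$; pulling out $1/(-q;q^2)_\infty$ leaves
\[
\frac{1}{(-q;q^2)_\infty}\sum_{j\geq 0}\frac{(-q;q^2)_j q^{j(j+2)}}{(q^2;q^2)_j},
\]
which is evaluated by Slater's identity \eqref{Slater34}. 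For \eqref{(2,2)-10} the same step produces $\sum_{j\geq 0}(-q;q^2)_j q^{j^2}/(q^2;q^2)_j$, i.e.\ \eqref{Slater36}. For \eqref{(2,2)-11} the $i$-sum collapses to $1/(-q^{j+1};q)_\infty$, and after extracting $1/(-q;q)_\infty$ the remaining $j$-sum is $\sum_{j\geq 0}(-q;q)_j q^{j(j-1)/2}/(q;q)_j$, which is exactly \eqref{Slater13}.

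For \eqref{thm16-(2,2)} the parameter $a$ is attached to $i$, so I would instead sum over $j$ first. Rewriting the exponent as $j(j-1)+(2i+2)j$ and applying \eqref{Euler-2} with $q\mapsto q^2$ yields
\[
\sum_{j\geq 0}\frac{q^{j(j-1)}(q^{2i+2})^{j}}{(q^2;q^2)_j}=(-q^{2i+2};q^2)_\infty=\frac{(-q^2;q^2)_\infty}{(-q^2;q^2)_i}.
\]
The remaining $i$-sum is $(-q^2;q^2)_\infty\sum_{i\geq 0}a^{i}/\bigl((q^2;q^2)_i(-q^2;q^2)_i\bigr)$, and the key observation is the telescoping identity $(q^2;q^2)_i(-q^2;q^2)_i=(q^4;q^4)_i$. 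A second application of \eqref{Euler-1} then gives $(-q^2;q^2)_\infty/(a;q^4)_\infty$, and the product relation $(-q^2;q^2)_\infty=1/(q^2;q^4)_\infty$ completes the identification with $1/(a,q^2;q^4)_\infty$.

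The only real technical nuisance is the product-side bookkeeping for \eqref{(2,2)-9} and \eqref{(2,2)-10}: after the collapse one is left comparing a modulus-8 expression (coming from \eqref{Slater34} or \eqref{Slater36}) against a modulus-16 expression on the right. This is handled by repeatedly applying $(q^{k};q^{8})_\infty=(q^{k},q^{k+8};q^{16})_\infty$ together with $(-q;q^2)_\infty^{-1}=(q;q^2)_\infty/(q^2;q^4)_\infty$ and the standard factorisations $(q^2;q^4)_\infty=(q^2,q^6;q^8)_\infty$, $(q;q^2)_\infty=(q,q^3,q^5,q^7;q^8)_\infty$; the common factors then cancel cleanly. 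For \eqref{(2,2)-11} the analogous simplification uses $(q^4;q^4)_\infty=(q^2;q^2)_\infty(-q^2;q^2)_\infty$ to show that the first piece of \eqref{Slater13}, divided by $(-q;q)_\infty$, matches $1/(q^2;q^4)_\infty$, while the second piece matches $(q^2;q^4)_\infty/(q;q^2)_\infty$. These are routine but required to reach the printed form.
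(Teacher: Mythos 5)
Your proposal is correct and follows exactly the paper's (sketched) proof: sum over $i$ via \eqref{Euler-1} and invoke \eqref{Slater34}, \eqref{Slater36}, \eqref{Slater13} for the first three identities, then sum over $j$ via \eqref{Euler-2} for \eqref{thm16-(2,2)}. You have in fact supplied the routine product-side verifications that the paper omits, and they check out.
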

\begin{proof}
Summing over $i$ using \eqref{Euler-1} first, the first three identities can be proved by using \eqref{Slater34}, \eqref{Slater36} and \eqref{Slater13}, respectively. Summing over $j$  using \eqref{Euler-2} first, we can prove \eqref{thm16-(2,2)}. We omit the details.
\end{proof}

\begin{theorem}
We have
\begin{align}\label{eq-u-id}
    \sum_{i,j\geq 0} \frac{q^{i^2-2ij+j^2+j-i}a^{i+j}}{(q^4;q^4)_i(q^4;q^4)_j}=\frac{1}{(a;q^2)_\infty}.
\end{align}
\end{theorem}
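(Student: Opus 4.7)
The plan is to reduce \eqref{eq-u-id} to a finite $q$-binomial identity by grouping terms according to $n=i+j$. The starting observation is that the exponent factorises as
\[
    i^2-2ij+j^2+j-i=(i-j)^2-(i-j)=(i-j)(i-j-1),
\]
so the summand depends on $(i,j)$ only through $i+j$ and $i-j$. Since the right-hand side expands as $1/(a;q^2)_\infty=\sum_{n\ge 0}a^n/(q^2;q^2)_n$ by \eqref{Euler-1}, matching coefficients of $a^n$ reduces the claim to showing
\[
    \sum_{i+j=n}\frac{q^{(i-j)(i-j-1)}}{(q^4;q^4)_i(q^4;q^4)_j}=\frac{1}{(q^2;q^2)_n}\qquad(n\ge 0).
\]
Multiplying by $(q^4;q^4)_n$ and using $(q^4;q^4)_n=(q^2;q^2)_n(-q^2;q^2)_n$ turns this into the finite identity
\[
    S_n(q):=\sum_{i=0}^n{n\brack i}_{q^4}q^{(2i-n)(2i-n-1)}=(-q^2;q^2)_n.
\]

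To dispose of the quadratic exponent in $i$, I would invoke the Gaussian inversion symmetry ${n\brack i}_{Q}=Q^{i(n-i)}{n\brack i}_{Q^{-1}}$. Applying it with $Q=q^4$, and using the elementary identity $(2i-n)(2i-n-1)+4i(n-i)=n(n+1)-2i$, the sum $S_n(q)$ rewrites as $q^{n(n+1)}\sum_{i=0}^n{n\brack i}_{q^{-4}}q^{-2i}$. Setting $r=q^{-2}$ and observing that $q^{n(n+1)}(-r;r)_n=(-q^2;q^2)_n$, the identity $S_n(q)=(-q^2;q^2)_n$ is reduced to
\[
    \sum_{i=0}^n{n\brack i}_{r^2}r^i=(-r;r)_n.
\]

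The last identity is easily established by comparing generating functions in an auxiliary variable $z$. By \eqref{Euler-1}, $\sum_{n\ge 0}z^n/(r;r)_n=1/(z;r)_\infty$. On the other hand, since $(r^2;r^2)_n=(r;r)_n(-r;r)_n$, we have
\[
    \sum_{n\ge 0}\frac{z^n}{(r^2;r^2)_n}\sum_{i=0}^n{n\brack i}_{r^2}r^i
    =\sum_{i,j\ge 0}\frac{(zr)^iz^j}{(r^2;r^2)_i(r^2;r^2)_j}
    =\frac{1}{(z,zr;r^2)_\infty}=\frac{1}{(z;r)_\infty},
\]
using $(z,zr;r^2)_\infty=(z;r)_\infty$ in the final step. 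Matching coefficients of $z^n$ gives the claim and completes the proof.

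The main obstacle is spotting the inversion-symmetry step, which converts the quadratic exponent in $i$ into a linear one; once that is in place, the remaining steps are routine $q$-series bookkeeping. Alternatively, $S_n(q)=(-q^2;q^2)_n$ could be proved by induction on $n$ using a $q$-Pascal recurrence for ${n\brack i}_{q^4}$, or via the constant term method along the lines used elsewhere in the paper, but the route above is the most economical.
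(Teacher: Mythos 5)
Your proof is correct. The reduction — expanding the right side by \eqref{Euler-1}, extracting the coefficient of $a^n$, and clearing denominators to arrive at the finite identity $\sum_{i=0}^n {n\brack i}_{q^4} q^{(2i-n)(2i-n-1)}=(-q^2;q^2)_n$ — is exactly the paper's \eqref{coeff-equivalent}. Where you diverge is in how that finite identity is established. The paper's own proof is by induction: it shows that the left side satisfies the same first-order recurrence $L_n(q)=(1+q^{2n})L_{n-1}(q)$ as $(-q^2;q^2)_n$, via a $q$-Pascal manipulation of the Gaussian coefficients. Your route — applying the inversion symmetry ${n\brack i}_{Q}=Q^{i(n-i)}{n\brack i}_{Q^{-1}}$ to linearize the exponent and reducing to the Rogers--Szeg\H{o} evaluation $\sum_{i=0}^n{n\brack i}_{r^2}r^i=(-r;r)_n$ — is essentially the alternative proof recorded in Remark \ref{rem-Warnaar} (due to Warnaar), with the difference that you prove the Rogers--Szeg\H{o} evaluation self-containedly by a two-variable Euler generating-function computation rather than quoting \cite[Eq.\ (8.13)]{TAMS}. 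Both approaches are clean; the induction is more elementary and avoids the inversion trick, while yours explains \emph{why} the identity holds (it is a disguised instance of $H_n(r;r^2)=(-r;r)_n$) and requires no guessing of a recurrence. One small point worth making explicit: the generating-function argument is carried out for $|r|<1$, whereas you apply the result at $r=q^{-2}$ with $|q|<1$; this is harmless because for each fixed $n$ the identity is one of polynomials in $r$, but the remark deserves a sentence.
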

\begin{proof}
By \eqref{Euler-1} we have
\begin{align}
    \frac{1}{(a;q^2)_\infty} =\sum_{n=0}^\infty \frac{a^n}{(q^2;q^2)_n}. \label{u-expan}
\end{align}
Comparing the coefficients of $a^n$ on both sides of \eqref{eq-u-id}, it suffices to prove that
\begin{align}\label{coeff-equal}
    \sum_{i+j=n} \frac{q^{i^2-2ij+j^2+j-i}}{(q^4;q^4)_i(q^4;q^4)_j}=\frac{1}{(q^2;q^2)_n}.
\end{align}
This is equivalent to
\begin{align}\label{coeff-equivalent}
   q^{n^2+n} \sum_{i=0}^n {n \brack i}_{q^4} q^{4i^2-(4n+2)i}=(-q^2;q^2)_n.
\end{align}

 We denote the left and right sides of \eqref{coeff-equivalent} as $L_n(q)$ and $R_n(q)$, respectively. Obviously, we have for $n\geq 1$ that
 \begin{align}
     R_n(q)=(1+q^{2n})R_{n-1}(q). \label{R-rec}
 \end{align}
 For $n\geq 1$ we have
\begin{align*}
     &L_n(q)-q^{2n}L_{n-1}(q)= q^{n^2+n} \sum_{i=0}^n {n\brack i}_{q^4} q^{4i^2-(4n+2)i} -q^{n^2+n} \sum_{i=0}^{n-1} {n-1 \brack i}_{q^4} q^{4i^2-(4n-2)i} \nonumber \\
     &=q^{n^2-n}+q^{n^2+n} \sum_{i=0}^{n-1} \left({ n\brack i}_{q^4}-{n-1 \brack i}_{q^4} q^{4i} \right) q^{4i^2-(4n+2)i} \nonumber \\
     &=q^{n^2-n}+q^{n^2+n} \sum_{i=0}^{n-1} {n-1\brack i-1}_{q^4} q^{4i^2-(4n+2)i} \nonumber \\
    &=q^{n^2-n}+q^{n^2+n} \sum_{i=1}^{n} {n-1 \brack n-i-1} q^{4(n-i)^2-(4n+2)(n-i)} \nonumber \\
     &=q^{n^2-n}\left(1+\sum_{i=1}^{n-1} {n-1 \brack i}_{q^4} q^{4i^2-(4n-2)i} \right) \nonumber \\
     &=L_{n-1}(q).
 \end{align*}
 This proves that
 \begin{align}
     L_n(q)=(1+q^{2n})L_{n-1}(q). \label{L-rec}
 \end{align}
 Note that $L_0(q)=R_0(q)=1$. From \eqref{L-rec} and \eqref{R-rec} we know that for any $n\geq 0$, we always have $L_n(q)=R_n(q)$. This proves \eqref{coeff-equal} and hence the desired identity.
\end{proof}
\begin{rem}\label{rem-Warnaar}
Warnaar \cite{Warnaar} informed us that \eqref{coeff-equivalent} can be proved in the following different way using  the Rogers-Szeg\H{o} polynomial
\begin{align}\label{RS-defn}
    H_n(t;q):=\sum_{j=0}^n t^j{n\brack j}.
\end{align}
From \cite[Eq.\ (8.13)]{TAMS} we know that 
\begin{align}
\sum\limits_{n=0}^\infty \frac{x^n H_n(t^2;q^4)}{(q^2;q^2)_n} = \frac{(t^2x^2q^2;q^4)_{\infty}}{(x,t^2x;q^2)_{\infty}}.
\end{align}
If we set $t=q$, we get
\begin{align}
\sum\limits_{n=0}^\infty \frac{x^n H_n(q^2;q^4)}{(q^2;q^2)_n} = \frac{(x^2q^4;q^4)_{\infty}}{(x,q^2x;q^2)_{\infty}} = \frac{(-xq^2;q^2)_{\infty}}{(x;q^2)_{\infty}}=\sum_{n=0}^\infty \frac{x^n(-q^2;q^2)_n}{(q^2;q^2)_n}.
\end{align}
Comparing the coefficients of $x^n$ on both sides, we deduce that
\begin{align}\label{H-eval}
    H_n(q^2;q^4)=(-q^2;q^2)_n.
\end{align}
After replacing $q$ by $1/q$, we see that \eqref{coeff-equivalent} is equivalent to
\begin{align}\label{coeff-equivalent-1}
 \sum_{i=0}^n {n \brack i}_{q^{-4}} q^{-4i^2+(4n+2)i}=q^{n^2+n}(-q^{-2};q^{-2})_n=(-q^2;q^2)_n. 
\end{align}
We have 
\begin{align}
    \mathrm{LHS}=\sum_{i=0}^n {n \brack i}_{q^{4}}\frac{q^{-2i(2n-i+1)}}{q^{-2i(1+i)}} q^{-4i^2+(4n+2)i}=\sum_{i=0}^n {n \brack i}_{q^4} q^{2i}.
\end{align}
Now by \eqref{H-eval} we prove \eqref{coeff-equivalent-1}.
\end{rem}

\begin{theorem}\label{thm-14}
We have
\begin{align}
\sum_{i,j\geq 0}\frac{q^{i^2+j^2+2ij+4i}}{(q^8;q^8)_i(q^8;q^8)_j}&=\frac{(q^2;q^2)_\infty}{(q,q^4;q^{5})_\infty(q^4;q^4)_\infty}, \label{(8,8)-2}\\
\sum_{i,j\geq 0}\frac{q^{i^2+j^2+2ij+2i+6j}}{(q^8;q^8)_i(q^8;q^8)_j}&=\frac{(q^2;q^{2})_\infty}{(q^2,q^3;q^{5})_\infty(q^4;q^4)_\infty}. \label{(8,8)-3}
\end{align}
\end{theorem}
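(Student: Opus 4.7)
The central observation is the perfect-square identity $i^2+j^2+2ij=(i+j)^2$, which appears in both exponents. My plan is to change variables to $n=i+j$, so that each double sum collapses into an outer sum over $n$ with an inner finite Gaussian-binomial sum over $i\in\{0,\dots,n\}$ that I can evaluate in closed form.

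For \eqref{(8,8)-2}, after setting $n=i+j$ the exponent becomes $n^2+4i$, and the sum reduces to
\begin{align*}
\sum_{n=0}^\infty \frac{q^{n^2}}{(q^8;q^8)_n}\sum_{i=0}^n {n\brack i}_{q^8} q^{4i}.
\end{align*}
The inner sum is the Rogers--Szeg\H{o} polynomial $H_n(q^4;q^8)$, which by the evaluation \eqref{H-eval} (with $q$ replaced by $q^2$) equals $(-q^4;q^4)_n$. Combined with the factorization $(q^8;q^8)_n=(q^4;q^4)_n(-q^4;q^4)_n$, this collapses the double sum to $\sum_{n\geq 0} q^{n^2}/(q^4;q^4)_n$. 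Slater's identity \eqref{Slater20} evaluates this as $1/[(q,q^4;q^5)_\infty (-q^2;q^2)_\infty]$, and the elementary factorization $(-q^2;q^2)_\infty = (q^4;q^4)_\infty/(q^2;q^2)_\infty$ puts it in the claimed form.

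For \eqref{(8,8)-3}, I write $2i+6j = 6n-4i$ and reverse the inner summation via $i\mapsto n-i$, which turns the inner sum into $q^{-4n}/(q^4;q^4)_n$ by exactly the computation just performed for \eqref{(8,8)-2}. The double sum therefore collapses to $\sum_{n\geq 0} q^{n^2+2n}/(q^4;q^4)_n$, and applying Slater's identity \eqref{Slater16} together with the same factorization of $(-q^2;q^2)_\infty$ yields the stated product side.

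I do not anticipate any real obstacle: once one notices the perfect-square structure in the exponent and invokes the Rogers--Szeg\H{o} evaluation recorded in Remark \ref{rem-Warnaar}, both identities reduce in a single step to known Slater entries. The only routine checks are verifying the Rogers--Szeg\H{o} evaluation after the rescaling $q\mapsto q^2$, and matching the two resulting Slater products against the right-hand sides through the conversion $(-q^2;q^2)_\infty = (q^4;q^4)_\infty/(q^2;q^2)_\infty$.
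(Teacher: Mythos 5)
Your proof is correct, but it follows a genuinely different route from the paper's. The paper proves Theorem \ref{thm-14} by the constant term method: it writes the double sum as $\CT$ of $\frac{1}{(uz;q^8)_\infty(vz;q^8)_\infty}\sum_k z^{-k}q^{k^2-k}$ and then, for the specializations $(u,v)=(q^5,q)$ and $(q^3,q^7)$, merges the two Euler products into a single one via $(zq;q^8)_\infty(zq^5;q^8)_\infty=(zq;q^4)_\infty$, which immediately yields $\sum_n q^{n^2}/(q^4;q^4)_n$ and $\sum_n q^{n^2+2n}/(q^4;q^4)_n$ before invoking \eqref{Slater20} and \eqref{Slater16}. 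You instead exploit the perfect square $(i+j)^2$ directly, sum along the diagonal $n=i+j$, and evaluate the inner sum as the Rogers--Szeg\H{o} polynomial $H_n(q^4;q^8)=(-q^4;q^4)_n$ via \eqref{H-eval} with $q\mapsto q^2$; the cancellation $(q^8;q^8)_n=(q^4;q^4)_n(-q^4;q^4)_n$ then produces exactly the same two single sums, so both arguments funnel into the same Slater entries. Your approach is more elementary in that it avoids the $\CT$ formalism and the Jacobi triple product entirely, at the cost of needing the Rogers--Szeg\H{o} evaluation (which the paper records only in Remark \ref{rem-Warnaar}, in a different context); the paper's approach buys uniformity with the many other theorems proved by the same $F(u,v)$ template. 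All of your individual steps check out, including the reversal $i\mapsto n-i$ and the bookkeeping $q^{6n}\cdot q^{-4n}=q^{2n}$ in the second identity, and the final conversion $(-q^2;q^2)_\infty=(q^4;q^4)_\infty/(q^2;q^2)_\infty$.
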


\begin{proof}
Let
\begin{align}
    F(u,v)=F(u,v;q):=\sum_{i,j\geq 0} \frac{u^iv^jq^{i^2+j^2+2ij-i-j}}{(q^8;q^8)_i(q^8;q^8)_j}=\sum_{i,j\geq 0} \frac{u^iv^jq^{(i+j)(i+j-1)}}{(q^8;q^8)_i(q^8;q^8)_j}.\nonumber
\end{align}
We have
\begin{align}
F(u,v)&=\CT \left[\sum_{i\geq 0} \frac{u^iz^i}{(q^8;q^8)_i} \sum_{j\geq 0} \frac{v^jz^j}{(q^8;q^8)_j} \sum_{k=-\infty}^\infty z^{-k}q^{k^2-k} \right] \nonumber \\
&=\CT\left[\frac{1}{(uz;q^8)_\infty (vz;q^8)_\infty} \sum_{k=-\infty}^\infty z^{-k}q^{k^2-k}\right].\nonumber
\end{align}
Let $(u,v)=(q^5,q)$. We have
\begin{align}
    &F(q^5,q)=\CT\left[\frac{1}{(zq;q^8)_\infty (zq^5;q^8)_\infty} \sum_{k=-\infty}^\infty z^{-k}q^{k^2-k}\right] \nonumber \\
    &=\CT\left[\frac{1}{(zq;q^4)_\infty}  \sum_{k=-\infty}^\infty z^{-k}q^{k^2-k}\right] = \CT\left[\sum_{n\geq 0} \frac{z^nq^n}{(q^4;q^4)_n}  \sum_{k=-\infty}^\infty z^{-k}q^{k^2-k}\right] \nonumber\\
    &=\sum_{n\geq 0} \frac{q^{n^2}}{(q^4;q^4)_n}.\nonumber
\end{align}
Now using \eqref{Slater20} we obtain \eqref{(8,8)-2}.

Similarly, setting $(u,v)=(q^3,q^7)$ and using \eqref{Slater16} we obtain \eqref{(8,8)-3}.
\end{proof}

\subsection{Identities of index $(1,2)$}
\begin{theorem}\label{thm-3}
	We have
	\begin{align}
		\sum_{i,j\geq 0}\frac{(-1)^jq^{3i^2/2+j^2+2ij-i/2}}{(q;q)_i(q^2;q^2)_j}&=\frac{1}{(q^2,q^8;q^{10})_\infty},\label{(1,2)-7}  \\
		\sum_{i,j\geq 0}\frac{(-1)^jq^{3i^2/2+j^2+2ij+i/2+2j}}{(q;q)_i(q^2;q^2)_j}&=\frac{1}{(q^2,q^8;q^{10})_\infty},\label{(1,2)-8}  \\
		\sum_{i,j\geq 0}\frac{(-1)^jq^{3i^2/2+j^2+2ij+3i/2+2j}}{(q;q)_i(q^2;q^2)_j}&=\frac{1}{(q^4,q^6;q^{10})_\infty},\label{(1,2)-9}  \\
		\sum_{i,j\geq 0}\frac{(-1)^jq^{3i^2/2+j^2+2ij+5i/2+4j}}{(q;q)_i(q^2;q^2)_j}&=\frac{1}{(q^4,q^6;q^{10})_\infty}.\label{(1,2)-10}
	\end{align}
\end{theorem}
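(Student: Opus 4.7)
The plan is to collapse each of the four double sums to a single sum by first performing the inner $j$-summation via Euler's identity \eqref{Euler-2} with $q$ replaced by $q^2$. In all four identities the $j$-part of the summand takes the shape $\sum_{j\geq 0}(-1)^jq^{j^2+2ij+cj}/(q^2;q^2)_j$, with linear coefficient $c=0,2,2,4$ for \eqref{(1,2)-7}, \eqref{(1,2)-8}, \eqref{(1,2)-9}, \eqref{(1,2)-10} respectively. Choosing $z=-q^{2i+c+1}$ identifies this inner sum with $(q^{2i+c+1};q^2)_\infty = (q;q^2)_\infty/(q;q^2)_{i+\lceil c/2\rceil}$, so pulling out $(q;q^2)_\infty$ reduces each double sum to $(q;q^2)_\infty$ times a single sum in $i$ of shape $\sum_i q^{i(3i+a)/2}/\bigl((q;q)_i(q;q^2)_{i+b}\bigr)$.

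For \eqref{(1,2)-7} this single sum is $\sum_i q^{i(3i-1)/2}/\bigl((q;q)_i(q;q^2)_i\bigr)$, matching Slater's identity \eqref{Slater46}. For \eqref{(1,2)-8} it is $\sum_i q^{i(3i+1)/2}/\bigl((q;q)_i(q;q^2)_{i+1}\bigr)$, matching the Rogers identity on p.~330, line~1 of \cite{Rogers1917}. For \eqref{(1,2)-9} it is $\sum_i q^{i(3i+3)/2}/\bigl((q;q)_i(q;q^2)_{i+1}\bigr)$, matching Slater's identity \eqref{Slater44}. In each case the final simplification uses $(q;q^2)_\infty/(q;q)_\infty = 1/(q^2;q^2)_\infty$ together with $(q^2;q^2)_\infty = (q^2,q^4,q^6,q^8,q^{10};q^{10})_\infty$, which cancels the mod-10 product side down to the asserted $1/(q^2,q^8;q^{10})_\infty$ or $1/(q^4,q^6;q^{10})_\infty$.

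For \eqref{(1,2)-10} the same reduction gives $(q;q^2)_\infty \sum_i q^{i(3i+5)/2}/\bigl((q;q)_i(q;q^2)_{i+2}\bigr)$, and completing the proof demands the companion mod-10 identity
\[
\sum_{n\geq 0}\frac{q^{n(3n+5)/2}}{(q;q)_n(q;q^2)_{n+2}}=\frac{(q^2,q^8,q^{10};q^{10})_\infty}{(q;q)_\infty}.
\]
This is the principal obstacle, since it is not among the three mod-10 single-sum identities collected in Section~\ref{sec-pre}. I expect to establish it either by locating the relevant entry in \cite{Rogers1917} (the same page carries several further identities beyond the one quoted in the preliminaries), or by a short Bailey-chain/telescoping derivation from \eqref{Slater44}, in the spirit of the $g_m$-recursion used by Warnaar in Remark \ref{rem-Warnaar-pq}. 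With this companion in hand, \eqref{(1,2)-10} follows by exactly the same simplification used in the first three cases.
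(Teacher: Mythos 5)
Your reduction of each double sum to a single sum by summing over $j$ with \eqref{Euler-2} is correct, and for the first three identities it succeeds exactly as you describe: the resulting single sums match \eqref{Slater46}, \eqref{(Rogers [37, p. 330 (2), line 1])} and \eqref{Slater44}, and the final simplification via $(q;q^2)_\infty/(q;q)_\infty=1/(q^2;q^2)_\infty$ is routine. This is in fact the alternative route the authors themselves sketch immediately after their proof.

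For \eqref{(1,2)-10}, however, there is a genuine gap. The single-sum identity your reduction requires, namely
\begin{align*}
\sum_{n\geq 0}\frac{q^{n(3n+5)/2}}{(q;q)_n(q;q^2)_{n+2}}=\frac{(q^2,q^8,q^{10};q^{10})_\infty}{(q;q)_\infty},
\end{align*}
is not in \cite{Rogers1917} or Slater's list: the paper explicitly states it ``appears to be new'' and records it as a corollary \emph{deduced from} \eqref{(1,2)-10}, so searching the literature for it will fail, and using it to prove \eqref{(1,2)-10} would be circular unless you supply an independent proof. Your fallback (a Bailey-chain or telescoping derivation from \eqref{Slater44}) is only a proposal; note that the two sums have the same product side, so you would need to show directly that $\sum_n q^{n(3n+3)/2}/\bigl((q;q)_n(q;q^2)_{n+1}\bigr)=\sum_n q^{n(3n+5)/2}/\bigl((q;q)_n(q;q^2)_{n+2}\bigr)$, which is not an obvious telescoping. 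The paper sidesteps this entirely by the constant term method: writing the sum side as $\CT\bigl[(uz;q)_\infty (vz;q^2)_\infty^{-1}\sum_k(-1)^kq^{k^2-k}z^{-k}\bigr]$ and specializing $(u,v)=(q^4,q^5)$ reduces \eqref{(1,2)-10} directly to the second Rogers--Ramanujan identity \eqref{Rama-2} (and the same device with $(u,v)=(q,q)$, $(q^2,q^3)$, $(q^3,q^3)$ handles the other three uniformly). To complete your argument you must either adopt that route for the fourth identity or actually carry out the proof of the displayed single-sum identity.
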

\begin{proof}
We define
	\begin{align}
	F(u,v)=F(u,v;q):=\sum_{i,j\geq 0} \frac{u^iv^j(-1)^jq^{3i^2/2+j^2+2ij-3i/2-j}}{(q;q)_i(q^2;q^2)_j}.\nonumber
	\end{align}
	By \eqref{Euler-1}, \eqref{Euler-2} and \eqref{Jacobi} we have
	\begin{align}
		F(u,v)&=\CT \left[ \sum_{i\geq 0}\frac{(-1)^iu^i z^i q^{i^2/2-i/2}}{(q;q)_i} \sum_{j\geq 0}\frac{v^j z^j}{(q^2;q^2)_j}  \sum_{k=-\infty}^\infty (-1)^kq^{k^2-k}z^{-k} \right]  \nonumber \\
		&=\CT \left[ \frac{(uz;q)_\infty}{(vz;q^2)_\infty}\sum_{k=-\infty}^\infty (-1)^kq^{k^2-k}z^{-k} \right]. \nonumber
	\end{align}
Setting $(u,v)=(q,q)$, we have
	\begin{align}
		&F(q,q)=\CT \left[ (zq^2;q^2)_\infty\sum_{k=-\infty}^\infty (-1)^kq^{k^2-k}z^{-k} \right] \nonumber \\
		&=\CT \left[ \sum_{n=0}^\infty \frac{(-1)^n z^n q^{n^2+n}}{(q^2;q^2)_n} \sum_{k=-\infty}^\infty (-1)^kq^{k^2-k}z^{-k} \right] =\sum_{n=0}^\infty \frac{q^{2n^2}}{(q^2;q^2)_n}.\nonumber
	\end{align}
 Substituting \eqref{Rama-1} into the above identity, we get \eqref{(1,2)-7}.

Similarly,	setting $(u,v)=(q^2,q^3)$ and using \eqref{Rama-1} we obtain \eqref{(1,2)-8}. Setting $(u,v)=(q^3,q^3)$ and using \eqref{Rama-2} we obtain \eqref{(1,2)-9}. Setting $(u,v)=(q^4,q^5)$ and using \eqref{Rama-2} we obtain \eqref{(1,2)-10}.
\end{proof}

Summing over $j$ using \eqref{Euler-2} first, the first three identities can also be proved directly by using \eqref{Slater46}, \eqref{(Rogers [37, p. 330 (2), line 1])} and \eqref{Slater44}, respectively. We omit the details.
For the last identity, after summing over $j$, we find  that it is equivalent to the follow identity, which appears to be new and we state as a corollary.

\begin{corollary}
We have
\begin{align}
    \sum_{i\geq 0}\frac{q^{(3i^2+5i)/2}}{(q;q)_i(q;q^2)_{i+2}}&=\frac{(q^2,q^8,q^{10};q^{10})_\infty}{(q;q)_\infty}.
\end{align}
\end{corollary}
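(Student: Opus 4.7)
The plan is to derive the corollary directly from identity \eqref{(1,2)-10} by performing the inner $j$-summation with Euler's second identity \eqref{Euler-2} and then tidying up the resulting $q$-Pochhammer factors. Since the claimed right-hand side matches that of Slater's identity \eqref{Slater44}, the corollary can be viewed as a reformulation of \eqref{(1,2)-10}, and no fundamentally new input is needed.

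First I would rewrite the quadratic exponent in $j$ as $j^2+2ij+4j=(j^2-j)+j(2i+5)$, so that the $j$-sum in \eqref{(1,2)-10} becomes
\begin{align*}
\sum_{j\geq 0}\frac{(-1)^j q^{j^2-j}q^{j(2i+5)}}{(q^2;q^2)_j}
=\sum_{j\geq 0}\frac{q^{j^2-j}(-q^{2i+5})^j}{(q^2;q^2)_j}
=(q^{2i+5};q^2)_\infty,
\end{align*}
by the $q\mapsto q^2$ version of \eqref{Euler-2}. Splitting off the factor independent of $i$ via $(q^{2i+5};q^2)_\infty=(q^5;q^2)_\infty/(q^5;q^2)_i$ then converts \eqref{(1,2)-10} into
\begin{align*}
(q^5;q^2)_\infty \sum_{i\geq 0}\frac{q^{(3i^2+5i)/2}}{(q;q)_i(q^5;q^2)_i}=\frac{1}{(q^4,q^6;q^{10})_\infty}.
\end{align*}

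Next I would convert $(q^5;q^2)_i$ to $(q;q^2)_{i+2}$ using $(q;q^2)_{i+2}=(1-q)(1-q^3)(q^5;q^2)_i$, and absorb the two prefactors via $(q^5;q^2)_\infty(1-q)(1-q^3)=(q;q^2)_\infty$. This yields
\begin{align*}
\sum_{i\geq 0}\frac{q^{(3i^2+5i)/2}}{(q;q)_i(q;q^2)_{i+2}}=\frac{1}{(q;q^2)_\infty(q^4,q^6;q^{10})_\infty}.
\end{align*}

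The final step is a purely product-level identification of this with $(q^2,q^8,q^{10};q^{10})_\infty/(q;q)_\infty$. Using $(q;q)_\infty=(q;q^2)_\infty(q^2;q^2)_\infty$ this reduces to
\begin{align*}
\frac{1}{(q^4,q^6;q^{10})_\infty}=\frac{(q^2,q^8,q^{10};q^{10})_\infty}{(q^2;q^2)_\infty},
\end{align*}
which is immediate from the $5$-dissection $(q^2;q^2)_\infty=(q^2,q^4,q^6,q^8,q^{10};q^{10})_\infty$. There is no real obstacle here; the only thing to be careful about is the index shift $(q^5;q^2)_i\leftrightarrow(q;q^2)_{i+2}$ and ensuring that the $(-1)^j$ in \eqref{(1,2)-10} is correctly absorbed when applying Euler's identity.
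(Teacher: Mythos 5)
Your proposal is correct and follows exactly the route the paper intends: the corollary is obtained from \eqref{(1,2)-10} by summing over $j$ with \eqref{Euler-2} (giving $(q^{2i+5};q^2)_\infty$), rewriting $(q^5;q^2)_i$ as $(q;q^2)_{i+2}/((1-q)(1-q^3))$, and matching the products via the $5$-dissection of $(q^2;q^2)_\infty$. The paper omits these details, but your computation supplies them accurately.
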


The following theorem provides a new identity which can be regarded as a companion to \eqref{Cao-Wang-3.8-1} and \eqref{Cao-Wang-3.8-2}.
\begin{theorem}\label{thm-4-new}
We have
\begin{align}
\sum_{i,j\geq 0}\frac{(-1)^iq^{i^2+2j^2+2ij-i-2j}a^{i+j}}{(q;q)_i(q^2;q^2)_j}&=(aq;q^2)_\infty.\label{(1,2)-new-1}
\end{align}
\end{theorem}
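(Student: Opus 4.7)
The strategy is to expand the right-hand side via Euler's identity and then match coefficients of $a^n$ on both sides, mirroring the proof of \eqref{eq-u-id}. Applying \eqref{Euler-2} with $q\mapsto q^2$ and $z\mapsto -aq$ gives
$$(aq;q^2)_\infty = \sum_{n=0}^\infty \frac{(-1)^n q^{n^2} a^n}{(q^2;q^2)_n}.$$
On the left side, the substitution $n=i+j$ (so $i=n-j$) together with the identities $i^2+2ij+2j^2=n^2+j^2$ and $-i-2j=-n-j$ regroups the double sum as
$$\mathrm{LHS} = \sum_{n=0}^\infty (-1)^n a^n q^{n^2-n}\,S_n, \qquad S_n := \sum_{j=0}^n \frac{(-1)^j q^{j^2-j}}{(q;q)_{n-j}(q^2;q^2)_j}.$$
Comparing coefficients of $a^n$ reduces the theorem to the finite identity $S_n = q^n/(q^2;q^2)_n$ for every $n\geq 0$.

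To establish this reduced identity, I would pass to the generating function in an auxiliary variable $x$. Swapping the order of summation factorises
$$\sum_{n=0}^\infty S_n\, x^n = \Bigl(\sum_{j=0}^\infty \frac{(-1)^j q^{j^2-j}\, x^j}{(q^2;q^2)_j}\Bigr)\Bigl(\sum_{m=0}^\infty \frac{x^m}{(q;q)_m}\Bigr) = \frac{(x;q^2)_\infty}{(x;q)_\infty} = \frac{1}{(xq;q^2)_\infty},$$
where the first factor is evaluated by \eqref{Euler-2} (with $q\mapsto q^2$, $z\mapsto -x$), the second by \eqref{Euler-1}, and the final equality uses $(x;q)_\infty = (x;q^2)_\infty(xq;q^2)_\infty$. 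A further application of \eqref{Euler-1} with $q\mapsto q^2$ and $z\mapsto xq$ yields $1/(xq;q^2)_\infty = \sum_{n=0}^\infty q^n x^n/(q^2;q^2)_n$, and comparing coefficients of $x^n$ delivers $S_n = q^n/(q^2;q^2)_n$, as required.

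The whole argument is essentially mechanical once the reduction $n=i+j$ is noticed, and no serious obstacle is expected. The only step requiring some care is the sign and exponent bookkeeping when passing from the $(i,j)$-indexed sum to the $(n,j)$-indexed sum in Step~1, which produces the prefactor $(-1)^n q^{n^2-n}$; a sanity check at $n=0,1$ confirms the formula. A purely constant-term proof in the style of Theorem~\ref{thm-3} is also available by introducing $F(u,v)=\sum_{i,j}(-1)^i u^i v^j q^{i^2+2j^2+2ij-i-2j}/((q;q)_i(q^2;q^2)_j)$, specialising $u=v=a$, and evaluating the resulting constant term via the Jacobi triple product, but the route above is shorter.
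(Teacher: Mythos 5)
Your proposal is correct, and the computations check out (I verified the exponent bookkeeping $i^2+2ij+2j^2=n^2+j^2$, $-i-2j=-n-j$, the sign $(-1)^i=(-1)^n(-1)^j$, and the cases $n=0,1$ of the reduced identity). However, it takes a genuinely different route from the paper. The paper's proof specializes Wei's two-parameter identity \eqref{eq-Wei} at $x=-a$, $y=a^{1/2}q^{-1/2}$, which collapses the double sum into a single ${}_1\phi_1$-type sum whose Pochhammer quotient telescopes, leaving two applications of \eqref{Euler-2} and the factorization $(a^{1/2}q^{1/2};q)_\infty(-a^{1/2}q^{1/2};q)_\infty=(aq;q^2)_\infty$. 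Your argument is self-contained: the diagonal substitution $n=i+j$ reduces the theorem to the finite identity $S_n=q^n/(q^2;q^2)_n$, which you prove by factoring the generating function $\sum_n S_nx^n=(x;q^2)_\infty/(x;q)_\infty=1/(xq;q^2)_\infty$ using only \eqref{Euler-1} and \eqref{Euler-2}. This is arguably cleaner than the paper's route, since it avoids both the external reference to Wei's theorem and the square roots of $a$; it also parallels the paper's own treatment of \eqref{eq-u-id}, except that there the finite identity is handled by a recurrence whereas your generating-function factorization dispatches it in one line. What the paper's approach buys in exchange is that Wei's identity is a flexible master formula from which several companions in this family can be read off by other specializations.
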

\begin{proof}
Recall the following identity \cite[Theorem 1.1]{Wei}:
\begin{align}\label{eq-Wei}
\sum_{i,j\geq 0} \frac{q^{i^2+2ij+2j^2-i-j}}{(q;q)_i(q^2;q^2)_j}x^iy^{2j}=(y;q)_\infty \sum_{k=0}^\infty \frac{(-x/y;q)_k}{(q;q)_k(y;q)_k}q^{\binom{k}{2}}y^k.
\end{align}
Setting $x=-a$ and $y=a^{1/2}q^{-1/2}$, we have
\begin{align}
\mathrm{LHS}&= (a^{1/2}q^{-1/2};q)_\infty\sum_{n=0}^\infty \frac{(a^{1/2}q^{1/2};q)_nq^{{(n^2-n)}/2}(a^{1/2}q^{-1/2})^n}{(q;q)_n(a^{1/2}q^{-1/2};q)_n}\nonumber \\
&= (a^{1/2}q^{-1/2};q)_\infty\sum_{n=0}^\infty \frac{(1-a^{1/2}q^{n-1/2})q^{{(n^2-n)}/2}(a^{1/2}q^{-1/2})^n}{(q;q)_n(1-a^{1/2}q^{-1/2})}\nonumber \\
&= (a^{1/2}q^{1/2};q)_\infty \Big(\sum_{n=0}^\infty \frac{q^{{(n^2-n)}/2}(a^{1/2}q^{-1/2})^n}{(q;q)_n}-a^{1/2}q^{-1/2}\sum_{n=0}^\infty \frac{q^{{(n^2-n)}/2}(a^{1/2}q^{1/2})^n}{(q;q)_n}\Big)\nonumber \\
&= (a^{1/2}q^{1/2};q)_\infty \Big((-a^{1/2}q^{-1/2};q)_\infty-a^{1/2}q^{-1/2}(-a^{1/2}q^{1/2};q)_\infty\Big)\nonumber \\
&=(a^{1/2}q^{1/2};q)_\infty(-a^{1/2}q^{1/2};q)_\infty=(aq;q^2)_\infty. \nonumber
\end{align}
This proves \eqref{(1,2)-new-1}.
\end{proof}

\begin{theorem}\label{thm-5}
We have
\begin{align}
 \sum_{i,j\geq 0}\frac{(-1)^iq^{i^2+j^2+2ij}}{(q^2;q^2)_i(q^4;q^4)_j}&=\frac{(q^3,q^{11},q^{14},q^{17},q^{25};q^{28})_\infty}{(q^4,q^{12},q^{16},q^{24};q^{28})_\infty},\label{(2,4)-12}  \\
\sum_{i,j\geq 0}\frac{(-1)^iq^{i^2+j^2+2ij+2j}}{(q^2;q^2)_i(q^4;q^4)_j}&=\frac{(q,q^{13},q^{14},q^{15},q^{27};q^{28})_\infty}{(q^4,q^8,q^{20},q^{24};q^{28})_\infty},\label{(2,4)-13}  \\
\sum_{i,j\geq 0}\frac{(-1)^iq^{i^2+j^2+2ij+2i+2j}}{(q^2;q^2)_i(q^4;q^4)_j}&=\frac{(q^5,q^9,q^{14},q^{19},q^{23};q^{28})_\infty}{(q^8,q^{12},q^{16},q^{20};q^{28})_\infty}.\label{(2,4)-14}
\end{align}
\end{theorem}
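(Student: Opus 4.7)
The plan is to prove all three identities by the same two-step reduction: first perform the inner sum over $i$ in closed form using Euler's identity \eqref{Euler-2} (with base $q^2$), which collapses it to a theta-like factor $(q^{2j+c};q^2)_\infty$; then recognize the remaining single sum over $j$ as one of Slater's identities \eqref{Slater117}, \eqref{Slater118} or \eqref{Slater119}, each of which has exactly the denominator $(q;q^2)_j(q^4;q^4)_j$ that will be produced.

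Concretely, for any integer $c\geq 0$, applying \eqref{Euler-2} with $q$ replaced by $q^2$ and $z=-q^{2j+c}$ gives
\begin{align*}
\sum_{i\geq 0}\frac{(-1)^{i}q^{i^2-i}q^{i(2j+c)}}{(q^2;q^2)_{i}}=(q^{2j+c};q^2)_\infty=\frac{(q^c;q^2)_\infty}{(q^c;q^2)_j}.
\end{align*}
For \eqref{(2,4)-12}, we write $i^2+2ij=(i^2-i)+i(2j+1)$ (so $c=1$); for \eqref{(2,4)-13} the exponent of $i$ is also $(i^2-i)+i(2j+1)$ (so again $c=1$); for \eqref{(2,4)-14} we write $i^2+2i+2ij=(i^2-i)+i(2j+3)$ (so $c=3$, and $(q^3;q^2)_\infty/(q^3;q^2)_j=(q;q^2)_\infty/(q;q^2)_{j+1}$). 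Thus in each case the double sum collapses to
\begin{align*}
(q;q^2)_\infty\sum_{j\geq 0}\frac{q^{j^2+\beta j}}{(q;q^2)_{j+\delta}(q^4;q^4)_{j}},
\end{align*}
with $(\beta,\delta)=(0,0),(2,0),(2,1)$ respectively for the three identities.

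These three remaining $j$-sums are precisely the left-hand sides of Slater's identities \eqref{Slater117}, \eqref{Slater118} and \eqref{Slater119}. Substituting their product forms and simplifying the prefactor via $(q;q^2)_\infty(q^2;q^2)_\infty=(q;q)_\infty$ leaves, in the first case, the extra factor $(q^{14};q^{14})_\infty/(q^{28};q^{28})_\infty=(q^{14};q^{28})_\infty$ which merges into the numerator on base $q^{28}$; in the second and third cases the factor $(q^4;q^4)_\infty$ in the denominator is expanded on base $q^{28}$ as $(q^4,q^8,q^{12},q^{16},q^{20},q^{24},q^{28};q^{28})_\infty$ and cancelled against the corresponding factors supplied by $(q^a,q^{14-a},q^{14};q^{14})_\infty(q^b,q^{28-b};q^{28})_\infty$.

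I expect no real obstacle: the only nontrivial step is bookkeeping the conversion of products on bases $q^{14}$ and $q^4$ into products on base $q^{28}$, so that the cancellations with the $q^{28}$-denominator produce exactly the target right-hand sides. Since the computation is straightforward but slightly tedious, in the write-up I would present the derivation in detail for \eqref{(2,4)-12} (showing the cancellation $(q;q^2)_\infty(q^2;q^2)_\infty/(q;q)_\infty=1$ and the merge of $(q^{14};q^{28})_\infty$ into the numerator on base $q^{28}$), and then indicate that \eqref{(2,4)-13} and \eqref{(2,4)-14} follow identically by the same Euler-sum step and invocation of \eqref{Slater118} and \eqref{Slater119} respectively.
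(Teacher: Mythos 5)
Your proposal is correct and is exactly the paper's own (omitted) argument: sum over $i$ via Euler's identity \eqref{Euler-2} to produce the factor $(q;q^2)_\infty/(q;q^2)_{j+\delta}$, then invoke (S.\ 117)--(S.\ 119) and rewrite the products on base $q^{28}$. The bookkeeping you describe checks out for all three identities.
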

\begin{proof}
Summing over $i$ using \eqref{Euler-2} first, the identities can be proved by using \eqref{Slater117}, \eqref{Slater118} and  \eqref{Slater119}, respectively.  We omit the details.
\end{proof}

\begin{theorem}\label{thm-3.9}
We have
\begin{align}
\sum_{i,j\geq 0}\frac{q^{i^2+2j^2+2ij}}{(q^2;q^2)_i(q^4;q^4)_j}&=\frac{(-q;q^2)_\infty(q^3,q^4,q^7;q^7)_\infty}{(q^2;q^{2})_\infty},\label{(2,4)-1}  \\
\sum_{i,j\geq 0}\frac{q^{i^2+2j^2+2ij+2j}}{(q^2;q^2)_i(q^4;q^4)_j}&=\frac{(-q;q^2)_\infty(q^2,q^5,q^7;q^7)_\infty}{(q^2;q^{2})_\infty},\label{(2,4)-2}  \\
\sum_{i,j\geq 0}\frac{q^{i^2+2j^2+2ij+2i+2j}}{(q^2;q^2)_i(q^4;q^4)_j}&=\frac{(-q;q^2)_\infty(q,q^6,q^7;q^7)_\infty}{(q^2;q^{2})_\infty}.\label{(2,4)-3}
\end{align}
\end{theorem}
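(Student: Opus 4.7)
\medskip
\noindent\textbf{Proof proposal for Theorem \ref{thm-3.9}.}

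The plan is a direct summation approach: sum over $i$ first using Euler's identity \eqref{Euler-2}, pull out a common factor of $(-q;q^2)_\infty$, and then recognize the remaining single sums in $j$ as instances of Slater's identities \eqref{Slater31}--\eqref{Slater33}. The RHS of the three identities, apart from the common factor $(-q;q^2)_\infty$, matches exactly the right-hand sides of (S.\ 33), (S.\ 32) and (S.\ 31) respectively, which strongly suggests this route.

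First, for each identity write the summand as $\frac{q^{i^2-i}(q^{2j+c+1})^i}{(q^2;q^2)_i}$ times the $j$-dependent factor, where $c=0,0,2$ for \eqref{(2,4)-1}, \eqref{(2,4)-2}, \eqref{(2,4)-3} respectively. Applying \eqref{Euler-2} with $q$ replaced by $q^2$ and $z=q^{2j+c+1}$, the inner sum over $i$ evaluates to $(-q^{2j+c+1};q^2)_\infty$. For $c=0$ this equals $(-q;q^2)_\infty/(-q;q^2)_j$, and for $c=2$ it equals $(-q;q^2)_\infty/(-q;q^2)_{j+1}$. In both cases the factor $(-q;q^2)_\infty$ pulls out in front.

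Next, convert the denominators in $j$ into the form appearing in Slater's list using the elementary identities
\begin{align*}
(-q;q)_{2n}(q^2;q^2)_n &= (-q;q^2)_n(q^4;q^4)_n, \\
(-q;q)_{2n+1}(q^2;q^2)_n &= (-q;q^2)_{n+1}(q^4;q^4)_n,
\end{align*}
which follow from the split $(-q;q)_{2n}=(-q;q^2)_n(-q^2;q^2)_n$ together with $(q^2;q^2)_n(-q^2;q^2)_n=(q^4;q^4)_n$. After this manipulation, the remaining single sums are exactly
\begin{align*}
\sum_{j\geq 0}\frac{q^{2j^2}}{(-q;q^2)_j(q^4;q^4)_j},\quad
\sum_{j\geq 0}\frac{q^{2j(j+1)}}{(-q;q^2)_j(q^4;q^4)_j},\quad
\sum_{j\geq 0}\frac{q^{2j(j+1)}}{(-q;q^2)_{j+1}(q^4;q^4)_j},
\end{align*}
which by \eqref{Slater33}, \eqref{Slater32} and \eqref{Slater31} equal $(q^3,q^4,q^7;q^7)_\infty/(q^2;q^2)_\infty$, $(q^2,q^5,q^7;q^7)_\infty/(q^2;q^2)_\infty$ and $(q,q^6,q^7;q^7)_\infty/(q^2;q^2)_\infty$ respectively. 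This yields the three identities.

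The proof is essentially routine once the correct bookkeeping is in place, so I do not expect a genuine obstacle; the only step requiring care is the rewriting of Slater's Pochhammer symbols $(-q;q)_{2n}$ and $(-q;q)_{2n+1}$ into the $(-q;q^2)\,(q^4;q^4)$ form produced by our double sum, which is a one-line computation. This is why the authors note that they omit the details.
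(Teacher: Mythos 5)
Your proposal is correct and is exactly the paper's proof: the authors also sum over $i$ via \eqref{Euler-2} and then invoke \eqref{Slater33}, \eqref{Slater32} and \eqref{Slater31} respectively, omitting the details. Your bookkeeping (the factorization $(-q;q)_{2n}(q^2;q^2)_n=(-q;q^2)_n(q^4;q^4)_n$ and its shifted analogue) correctly supplies the omitted steps.
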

\begin{proof}
Summing over $i$ using \eqref{Euler-2} first, the identities can be proved by using \eqref{Slater33}, \eqref{Slater32} and \eqref{Slater31}, respectively.  We omit the details.
\end{proof}

\begin{theorem}\label{thm-3.10}
We have
\begin{align}
\sum_{i,j\geq 0}\frac{(-1)^jq^{i^2+2j^2+4ij+i}}{(q^2;q^2)_i(q^4;q^4)_j}&=\frac{1}{(q^4,q^6,q^8,q^{20},q^{22},q^{24};q^{28})_\infty},\label{(2,4)-15}  \\
\sum_{i,j\geq 0}\frac{(-1)^jq^{i^2+2j^2+4ij+i+4j}}{(q^2;q^2)_i(q^4;q^4)_j}&=\frac{1}{(q^2,q^8,q^{12},q^{16},q^{20},q^{26};q^{28})_\infty},\label{(2,4)-16}  \\
\sum_{i,j\geq 0}\frac{(-1)^jq^{i^2+2j^2+4ij+3i+4j}}{(q^2;q^2)_i(q^4;q^4)_j}&=\frac{1}{(q^4,q^{10},q^{12},q^{16},q^{18},q^{24};q^{28})_\infty}.\label{(2,4)-17}
\end{align}
\end{theorem}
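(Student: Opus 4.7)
The plan is to sum the inner index $i$ first using Euler's identity \eqref{Euler-2} (with $q\mapsto q^2$). In each of the three identities the exponent of $q$ that depends on $i$ has the form $i^2+(4j+c)i$ for an odd constant $c$ ($c=1$ for identities~1 and~2, $c=3$ for identity~3). Writing $i^2+(4j+c)i=(i^2-i)+(4j+c+1)i$ and applying $\sum_{i\geq 0} q^{i^2-i}z^i/(q^2;q^2)_i=(-z;q^2)_\infty$ with $z=q^{4j+c+1}$ contributes the factor $(-q^{c+1};q^2)_\infty/(-q^{c+1};q^2)_{2j}$. After pulling out the $j$-independent $(-q^{c+1};q^2)_\infty$, the resulting denominator is simplified using the Pochhammer identities
\[
(q^4;q^4)_j(-q^2;q^2)_{2j}=(-q^2;q^4)_j(q^8;q^8)_j,\qquad (q^4;q^4)_j(-q^4;q^2)_{2j}=(-q^6;q^4)_j(q^8;q^8)_j,
\]
so that each of the three double sums equals $(-q^2;q^2)_\infty$ times a single sum in $j$ (absorbing the factor $1+q^2$ needed in the third case into $(-q^4;q^2)_\infty$).

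The key observation is that each such single sum is precisely the image under the successive substitutions $q\mapsto q^2$ then $q\mapsto \zeta_4 q$ of the second form of the corresponding Slater identity among \eqref{Slater117}, \eqref{Slater118}, \eqref{Slater119}. Indeed, the first substitution doubles the exponents, so that the denominators become $(q^2;q^4)_n$ and $(q^8;q^8)_n$ as in our reduction; the second, using $\zeta_4^2=-1$ and $\zeta_4^{2n^2}=(-1)^n$, converts $(-q^2;q^4)_n$ into $(q^2;q^4)_n$ and introduces the sign $(-1)^n$, which is exactly what is needed. For identity~3 a small rearrangement handles the fact that the relevant Slater identity involves $(-q^2;q^4)_{n+1}$ rather than $(-q^2;q^4)_n$.

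The main obstacle will be simplifying the resulting product side, which naturally arises at modulus~$56$ after these substitutions, back to the stated modulus-$28$ form. This reduction is a theta-function computation that combines pairs via the elementary rule $(-q^a;q^{56})_\infty(-q^{a+28};q^{56})_\infty=(-q^a;q^{28})_\infty$ (and the analogous rule for $(q^a;q^{56})_\infty$), together with a careful application of the Jacobi triple product \eqref{Jacobi} to rewrite the remaining theta combinations. Because each of the three identities produces slightly different theta factors on the product side, this final verification is carried out individually in each case.
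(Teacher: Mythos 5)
Your proposal is correct and follows exactly the route the paper takes: the paper's (one-sentence) proof is precisely "sum over $i$ using \eqref{Euler-2} first and then invoke \eqref{Slater117}, \eqref{Slater118}, \eqref{Slater119}," with the details omitted. Your reduction — including the substitution (equivalently $q\mapsto -q^2$ in the second forms of S.\ 117--119), the Pochhammer factorizations, the $(1+q^2)$ adjustment for the third identity, and the final modulus-$56$-to-$28$ product simplification — supplies exactly those omitted details correctly.
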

\begin{proof}
Summing over $i$ using \eqref{Euler-2} first, the identities can be proved by using \eqref{Slater117}, \eqref{Slater118} and \eqref{Slater119}, respectively.  We omit the details.
\end{proof}

\begin{theorem}
We have
\begin{align}
\sum_{i,j\geq 0}\frac{(-1)^iq^{i^2+3j^2+2ij+2i+2j}}{(q^2;q^2)_i(q^4;q^4)_j}&=\frac{(q^3,q^7,q^{10},q^{13},q^{17};q^{20})_\infty}{(q^8,q^{12};q^{20})_\infty},\label{(2,4)-8}  \\
\sum_{i,j\geq 0}\frac{(-1)^iq^{i^2+3j^2+2ij}}{(q^2;q^2)_i(q^4;q^4)_j}&=\frac{(q,q^9,q^{10},q^{11},q^{19};q^{20})_\infty}{(q^4,q^{16};q^{20})_\infty},\label{(2,4)-7}   \\
\sum_{i,j\geq 0}\frac{(-1)^iq^{i^2+3j^2+2ij-2j}}{(q^2;q^2)_i(q^4;q^4)_j}&=\frac{(q^3,q^7,q^{10},q^{13},q^{17};q^{20})_\infty}{(q^8,q^{12};q^{20})_\infty},\label{(2,4)-19} \\
\sum_{i,j\geq 0}\frac{(-1)^iq^{i^2+3j^2+2ij-2i-4j}}{(q^2;q^2)_i(q^4;q^4)_j}&=\frac{(q,q^9,q^{10},q^{11},q^{19};q^{20})_\infty}{(q^4,q^{16};q^{20})_\infty}.\label{(2,4)-18}
\end{align}
\end{theorem}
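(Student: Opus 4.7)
The plan is to prove each of the four identities by summing over $i$ first using Euler's identity \eqref{Euler-2} with base $q^2$. Writing the $i$-dependent exponent as $i(i-1)+(2j+\mu)i$ with $\mu=3,1,1,-1$ for \eqref{(2,4)-8}, \eqref{(2,4)-7}, \eqref{(2,4)-19}, \eqref{(2,4)-18} respectively, we obtain
\[
\sum_{i\geq 0}\frac{(-1)^iq^{i(i-1)+(2j+\mu)i}}{(q^2;q^2)_i}=\sum_{i\geq 0}\frac{q^{i(i-1)}(-q^{2j+\mu})^i}{(q^2;q^2)_i}=(q^{2j+\mu};q^2)_\infty.
\]
Pulling the $j$-independent factor out via $(q^{2j+\mu};q^2)_\infty=(q^\mu;q^2)_\infty/(q^\mu;q^2)_j$, each identity reduces to a Rogers--Selberg type single sum at modulus $20$:
\[
(q^\mu;q^2)_\infty\sum_{j\geq 0}\frac{q^{3j^2+\beta j}}{(q^\mu;q^2)_j(q^4;q^4)_j},\qquad \beta\in\{2,0,-2,-4\}.
\]

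The next step is to recognize each single sum as an infinite product. The four right-hand sides pair up: \eqref{(2,4)-8} and \eqref{(2,4)-19} share one theta quotient, while \eqref{(2,4)-7} and \eqref{(2,4)-18} share the other, so at most two distinct mod-$20$ single-sum evaluations are needed. For instance, the reduction of \eqref{(2,4)-7} amounts to the identity
\[
\sum_{j\geq 0}\frac{q^{3j^2}}{(q;q^2)_j(q^4;q^4)_j}=\frac{(q^{10};q^{20})_\infty}{(q^3,q^4,q^5,q^7,q^{13},q^{15},q^{16},q^{17};q^{20})_\infty},
\]
whose right-hand side, multiplied by $(q;q^2)_\infty$, gives exactly the right-hand side of \eqref{(2,4)-7}. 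Analogous mod-$20$ evaluations will dispose of \eqref{(2,4)-8} and \eqref{(2,4)-19}, and these in turn can be deduced from the Bailey mod-$10$ identities \eqref{Slater44}, \eqref{Slater46} via modulus doubling.

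The most delicate case is \eqref{(2,4)-18}, where $\mu=-1$ makes $(q^{-1};q^2)_\infty=(1-q^{-1})(q;q^2)_\infty$ carry a problematic $(1-q^{-1})$ factor. The fix is to write $(q^{2j-1};q^2)_\infty=(1-q^{2j-1})(q^{2j+1};q^2)_\infty$, which splits the resulting $j$-sum into two pieces that, up to a shift and a power of $q$, reproduce the single sums already handled in the proofs of \eqref{(2,4)-7} and \eqref{(2,4)-19}; \eqref{(2,4)-18} then drops out as a linear combination of these two. The main obstacle is the supply of mod-$20$ Slater identities: they are not among those listed in Section~\ref{sec-pre}, so one must either import them from Slater's original paper or produce independent proofs. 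A clean fallback is to apply the constant-term method directly to the double sum, in the spirit of Theorem~\ref{thm-3}: express the sum as $\mathrm{CT}$ of a product of Euler factors in an auxiliary variable $z$ against a Jacobi theta series, and then evaluate by the Jacobi triple product identity \eqref{Jacobi}, sidestepping the need for any mod-$20$ preliminary.
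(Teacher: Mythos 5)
Your opening reduction is sound and is in fact the alternative route the paper itself sketches right after this theorem: summing over $i$ with \eqref{Euler-2} gives $(q^{2j+\mu};q^2)_\infty$ and reduces each identity to $(q^{\mu};q^2)_\infty\sum_j q^{3j^2+\beta j}/\bigl((q^{\mu};q^2)_j(q^4;q^4)_j\bigr)$. But your proposed source for these evaluations is wrong: replacing $q$ by $q^2$ in \eqref{Slater44} or \eqref{Slater46} produces denominators $(q^2;q^2)_n(q^2;q^4)_n$ and different product sides, not the sums above. The correct inputs are \eqref{Ramanujan [10,Entry 11.2.7]}, \eqref{Slater19} and \eqref{Slater15} with $q$ replaced by $-q$, and these \emph{are} listed in Section \ref{sec-pre}, contrary to your worry. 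Note also that since \eqref{(2,4)-8} leads to the Pochhammer $(q^3;q^2)_j$ while \eqref{(2,4)-19} leads to $(q;q^2)_j$, the equality of their product sides does not let you reuse a single evaluation: three distinct single-sum identities are needed for the first three formulas.

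The genuine gap is in \eqref{(2,4)-18}. Writing $(q^{2j-1};q^2)_\infty=(1-q^{2j-1})(q^{2j+1};q^2)_\infty$ splits the sum into $\Sigma_{-4}-q^{-1}\Sigma_{-2}$, where $\Sigma_{\beta}=\sum_j q^{3j^2+\beta j}/\bigl((q;q^2)_j(q^4;q^4)_j\bigr)$. The piece $\Sigma_{-2}$ is the \eqref{(2,4)-19} sum, but $\Sigma_{-4}$ is \emph{not}, up to any shift and power of $q$, one of the sums you have already handled: a shift $j\to j\pm 1$ perturbs the denominator by $j$-dependent factors such as $(1-q^{4j})$, and in fact $\Sigma_{-4}$ equals a sum of two distinct theta quotients rather than a single product, so it cannot be disposed of by one more Slater-type identity. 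This is precisely why the paper, after proving \eqref{(2,4)-18} by other means, records the outcome of this very reduction as a \emph{new} corollary rather than using it as input. Your fallback is essentially what the paper does: with $F(u,v)=\sum_{i,j}(-1)^iu^iv^jq^{i^2+3j^2+2ij-i-3j}/\bigl((q^2;q^2)_i(q^4;q^4)_j\bigr)$ one gets $\CT\bigl[(vz;q^4)_\infty(uz;q^2)_\infty^{-1}\sum_k(-1)^kq^{k^2-k}z^{-k}\bigr]$, and the specializations $(u,v)=(q^3,q^5),(q,q^3),(q,q),(q^{-1},q^{-1})$ collapse the product to $1/(qz;q^4)_\infty$ or $1/(q^3z;q^4)_\infty$, yielding $\sum_n(-1)^nq^{n^2}/(q^4;q^4)_n$ or $\sum_n(-1)^nq^{n^2+2n}/(q^4;q^4)_n$. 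Be aware, though, that this does not terminate at the Jacobi triple product as you suggest: one still needs the modulus-5 identities \eqref{Slater16} and \eqref{Slater20} under $q\mapsto-q$ to finish.
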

\begin{proof}
We define
\begin{align}
F(u,v)=F(u,v;q):=\sum_{i,j\geq 0} \frac{(-1)^iu^iv^jq^{i^2+3j^2+2ij-i-3j}}{(q^2;q^2)_i(q^4;q^4)_j}.\nonumber
\end{align}
By \eqref{Euler-1}, \eqref{Euler-2} and \eqref{Jacobi} we have
\begin{align}
F(u,v)&=\CT \left[ \sum_{i\geq 0}\frac{u^i z^i}{(q^2;q^2)_i} \sum_{j\geq 0}\frac{(-1)^jv^j z^jq^{2j^2-2j}}{(q^4;q^4)_j}  \sum_{k=-\infty}^\infty (-1)^kq^{k^2-k}z^{-k} \right]  \nonumber \\
&=\CT \left[ \frac{(vz;q^4)_\infty}{(uz;q^2)_\infty}\sum_{k=-\infty}^\infty (-1)^kq^{k^2-k}z^{-k} \right]. \nonumber
\end{align}
Setting $(u,v)=(q^{3},q^{5})$, we have
\begin{align}
&F(q^{3},q^{5})=\CT \left[ \frac{1}{(zq^{3};q^4)_\infty}\sum_{k=-\infty}^\infty (-1)^kq^{k^2-k}z^{-k} \right] \nonumber \\
&=\CT \left[ \sum_{n=0}^\infty \frac{ z^n q^{3n}}{(q^4;q^4)_n} \sum_{k=-\infty}^\infty (-1)^kq^{k^2-k}z^{-k} \right] =\sum_{n=0}^\infty \frac{(-1)^nq^{n^2+2n}}{(q^4;q^4)_n}.\nonumber
\end{align}
Now by \eqref{Slater16} we obtain \eqref{(2,4)-8}.

Setting $(u,v)=(q,q^3)$ and using \eqref{Slater20} we obtain \eqref{(2,4)-7}. Setting $(u,v)=(q,q)$ and using \eqref{Slater16} we obtain \eqref{(2,4)-19}. Setting $(u,v)=(q^{-1},q^{-1})$ and using \eqref{Slater20} we obtain  \eqref{(2,4)-18}.
\end{proof}

Summing over $i$ using \eqref{Euler-2} first, the first three identities can also be proved directly by using \eqref{Ramanujan [10,Entry 11.2.7]}, \eqref{Slater19} and \eqref{Slater15}, respectively. We omit the details.
As for the last identity, similar process reduces it to a new identity which we state as a corollary.
\begin{corollary}
We have
\begin{align}
    \sum_{j\geq 0}\frac{(-1)^jq^{3j^2-4j}}{(q^4;q^4)_j(-q;q^2)_{j-1}}&=\frac{(q^2,q^3,q^5;q^5)_\infty}{(q^2;q^2)_\infty}.
\end{align}
\end{corollary}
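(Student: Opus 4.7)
The plan is to derive the corollary from identity \eqref{(2,4)-18} of Theorem 3.12 by first evaluating the inner $i$-sum in closed form and then applying the substitution $q\to -q$ to the resulting single-sum identity.

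I would begin by summing over $i$. Rewriting $i^2+2ij-2i=i(i-1)+(2j-1)i$ and applying \eqref{Euler-2} with $q$ replaced by $q^2$ and $z=-q^{2j-1}$ gives $\sum_{i\geq 0}(-1)^i q^{i^2+2ij-2i}/(q^2;q^2)_i=(q^{2j-1};q^2)_\infty$. Substituting back, \eqref{(2,4)-18} collapses to the single-sum identity
$$\sum_{j\geq 0}\frac{q^{3j^2-4j}(q^{2j-1};q^2)_\infty}{(q^4;q^4)_j}=\frac{(q,q^9,q^{10},q^{11},q^{19};q^{20})_\infty}{(q^4,q^{16};q^{20})_\infty},\qquad (\star)$$
which I will denote by $(\star)$.

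Next, I would substitute $q\to -q$ in $(\star)$. On the sum side, $(-q)^{3j^2-4j}=(-1)^j q^{3j^2-4j}$ (since $n^2\equiv n\pmod 2$) and $((-q)^{2j-1};(-q)^2)_\infty=(-q^{2j-1};q^2)_\infty$. Using the factorization $(-q^{2j-1};q^2)_\infty=(-q;q^2)_\infty/(-q;q^2)_{j-1}$, valid for all $j\geq 0$ with the convention $(-q;q^2)_{-1}=q/(1+q)$, the LHS of $(\star)$ transforms into $(-q;q^2)_\infty\sum_{j\geq 0}(-1)^j q^{3j^2-4j}/((q^4;q^4)_j(-q;q^2)_{j-1})$, which is precisely $(-q;q^2)_\infty$ times the LHS of the corollary.

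The remaining step is to simplify the RHS of $(\star)$ under $q\to -q$. Since odd-indexed factors flip sign while even-indexed ones are invariant, it becomes $(-q,-q^9,q^{10},-q^{11},-q^{19};q^{20})_\infty/(q^4,q^{16};q^{20})_\infty$. Pairing $(-q,-q^{11};q^{20})_\infty=(-q;q^{10})_\infty$ and $(-q^9,-q^{19};q^{20})_\infty=(-q^9;q^{10})_\infty$, then invoking $(-q;q^2)_\infty=(-q,-q^3,-q^5,-q^7,-q^9;q^{10})_\infty$ together with the pair identities $(-q^{2k+1};q^{10})_\infty(q^{2k+1};q^{10})_\infty=(q^{4k+2};q^{20})_\infty$ for $k=1,2,3$, a direct reduction yields the transformed RHS equal to $(-q;q^2)_\infty(q^3,q^5,q^7;q^{10})_\infty/(q^4,q^6;q^{10})_\infty$. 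The mod-$5$ factorizations $(q^2,q^3,q^5;q^5)_\infty=(q^2,q^3,q^5,q^7,q^8,q^{10};q^{10})_\infty$ and $(q^2;q^2)_\infty=(q^2,q^4,q^6,q^8,q^{10};q^{10})_\infty$ then identify this product with $(-q;q^2)_\infty(q^2,q^3,q^5;q^5)_\infty/(q^2;q^2)_\infty$. Dividing both sides by $(-q;q^2)_\infty$ yields the corollary.

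The main obstacle will be the product-side bookkeeping in the last step; although each identity between $q$-Pochhammer products is elementary, careful manipulation of factors modulo $10$ and $20$ is required to reach the compact modulus-$5$ form on the right. As a consistency check, the $j=0$ term of the corollary's LHS contributes $(1+q)/q=q^{-1}+1$, and the $j=1$ term $-q^{-1}/(1-q^4)$ cancels the $q^{-1}$ part, so the sum is in fact a genuine power series whose constant term equals $1$, matching the RHS.
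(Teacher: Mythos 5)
Your proposal is correct and follows essentially the same route as the paper: the corollary is exactly what \eqref{(2,4)-18} reduces to after summing over $i$ with \eqref{Euler-2}, replacing $q$ by $-q$, and rearranging the infinite products (the paper omits these details with the phrase ``similar process reduces it to a new identity''). Your product-side bookkeeping modulo $10$ and $20$ and the convention $(-q;q^2)_{-1}=q/(1+q)$ are both handled correctly.
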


\begin{theorem}\label{thm-10}
	We have
	\begin{align}
		\sum_{i,j\geq 0}\frac{q^{i^2+2j^2-2ij-i}a^j}{(q^2;q^2)_i(q^4;q^4)_j}&={(-1,-a;q^2)_\infty},\label{-(2,4)-4}  \\
		\sum_{i,j\geq 0}\frac{q^{i^2+2j^2-2ij}}{(q^2;q^2)_i(q^4;q^4)_j}&=\frac{{(q^2;q^2)_\infty^3}{(q^3;q^3)_\infty^2}}{{(q;q)_\infty^2}{(q^4;q^4)_\infty^2}{(q^6;q^6)_\infty}},\label{-(2,4)-5}  \\
		\sum_{i,j\geq 0}\frac{q^{i^2+2j^2-2ij+2j}}{(q^2;q^2)_i(q^4;q^4)_j}&=\frac{{(q^2;q^2)_\infty^2}{(q^6;q^6)_\infty^2}}{{(q;q)_\infty}{(q^3;q^3)_\infty}{(q^4;q^4)_\infty^2}},\label{-(2,4)-6}  \\
		\sum_{i,j\geq 0}\frac{(-1)^jq^{i^2+2j^2-2ij}}{(q^2;q^2)_i(q^4;q^4)_j}&=(q^2;q^4)_\infty^2.\label{-(2,4)-9}
	\end{align}
\end{theorem}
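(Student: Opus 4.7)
The plan is to prove all four identities by performing the sum over $i$ first. In each case the $i$-dependence of the exponent of $q$ takes one of two forms: either $i^2-2ij$ (in \eqref{-(2,4)-5}, \eqref{-(2,4)-6}, \eqref{-(2,4)-9}) or $i^2-i-2ij$ (in \eqref{-(2,4)-4}). Writing $i^2-2ij=i(i-1)+i(1-2j)$ and $i^2-i-2ij=i(i-1)-2ij$, Euler's identity \eqref{Euler-2} with base $q^2$ evaluates the inner sums as
\begin{align*}
\sum_{i\geq 0}\frac{q^{i^2-2ij}}{(q^2;q^2)_i}=(-q^{1-2j};q^2)_\infty, \qquad \sum_{i\geq 0}\frac{q^{i^2-i-2ij}}{(q^2;q^2)_i}=(-q^{-2j};q^2)_\infty.
\end{align*}

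The second ingredient is an elementary shift identity, obtained by iterating $(-z;q^2)_\infty=(1+z)(-zq^2;q^2)_\infty$ a total of $j$ times and factoring out the negative powers of $q$:
\begin{align*}
(-q^{1-2j};q^2)_\infty=q^{-j^2}(-q;q^2)_j(-q;q^2)_\infty, \quad (-q^{-2j};q^2)_\infty=q^{-j(j+1)}(-q^2;q^2)_j(-1;q^2)_\infty.
\end{align*}
Substituting these back into the double sum, splitting $(q^4;q^4)_j=(q^2;q^2)_j(-q^2;q^2)_j$, and simplifying the powers of $q$ reduces each double sum to a single series in $j$.

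For \eqref{-(2,4)-4} the resulting single sum is $\sum_{j\geq 0} q^{j^2-j}a^j/(q^2;q^2)_j=(-a;q^2)_\infty$ by a second application of \eqref{Euler-2}; combining this with the front factor $(-1;q^2)_\infty$ yields $(-1,-a;q^2)_\infty$ immediately. For the remaining three identities we are left with an expression of the form $(-q;q^2)_\infty\cdot S(q)$, where $S(q)$ has the shape $\sum_{j\geq 0}(\pm 1)^j(-q;q^2)_jq^{j^2+\lambda j}/(q^4;q^4)_j$; these $S(q)$ are precisely Slater's identity \eqref{Slater25} (for \eqref{-(2,4)-5}), the Ramanujan-type identity \eqref{Ramanujan [10,Entry 5.3.7]} (for \eqref{-(2,4)-6}), and Slater's \eqref{Slater4} (for \eqref{-(2,4)-9}).

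It then remains to verify that the product obtained after these substitutions agrees with the stated right-hand side. This is routine using the base-change identities $(-q;q^2)_\infty=(q^2;q^2)_\infty^2/\bigl((q;q)_\infty(q^4;q^4)_\infty\bigr)$, $(q^3;q^3)_\infty=(q^3;q^6)_\infty(q^6;q^6)_\infty$, and their $q^{12}$-base analogues. The main obstacle in the proof is therefore not conceptual but bookkeeping: arranging that the mixed-base products coming from the known single-sum identities can be cleanly reconciled with the compact right-hand sides, which is most delicate for \eqref{-(2,4)-6} where factors modulo $12$ must be matched against factors modulo $3$, $4$ and $6$.
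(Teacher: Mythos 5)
Your proposal is correct and follows essentially the same route as the paper: the authors likewise sum over $i$ first via Euler's identity \eqref{Euler-2} and then invoke \eqref{Euler-1}/\eqref{Euler-2}, \eqref{Slater25}, \eqref{Ramanujan [10,Entry 5.3.7]} and \eqref{Slater4} for the four identities, omitting the details. Your intermediate reductions (the shifted-product identities and the resulting single sums in $j$) check out, so this is a valid filling-in of the argument the paper sketches.
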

\begin{proof}
Summing over $i$ using \eqref{Euler-2} first, the identities can be proved by using \eqref{Euler-1}, \eqref{Euler-2}, \eqref{Slater25}, \eqref{Ramanujan [10,Entry 5.3.7]} and \eqref{Slater4}, respectively.  We omit the details.
\end{proof}

\begin{theorem}\label{thm-11}
We have
\begin{align}
\sum_{i,j\geq 0}\frac{q^{3i^2+4j^2+4ij-2i}}{(q^4;q^4)_i(q^8;q^8)_j}&=\frac{1}{(q,q^4;q^{5})_\infty (-q^2;q^2)_\infty},\label{(4,8)-1}  \\
\sum_{i,j\geq 0}\frac{q^{3i^2+4j^2+4ij-4i+4j+1}}{(q^4;q^4)_i(q^8;q^8)_j}&=\frac{1}{(q,q^4;q^5)_\infty (-q^2;q^2)_\infty},\label{(4,8)-2}  \\
\sum_{i,j\geq 0}\frac{q^{3i^2+4j^2+4ij+4j}}{(q^4;q^4)_i(q^8;q^8)_j}&=\frac{1}{(q^2,q^3;q^5)_\infty(-q^2;q^2)_\infty}. \label{(4,8)-3}
\end{align}
\end{theorem}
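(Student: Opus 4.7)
The plan is to prove all three identities by the constant-term method, using the auxiliary variable $N=2j+i$, under which the quadratic part $3i^2+4j^2+4ij$ becomes $N^2+2i^2$. After rewriting each exponent in the form $(N+c)^2+(\text{linear in }N)+(\text{polynomial in }i)$ for a suitable $c\in\{0,1\}$, I will represent the $N$-dependence as
$$q^{(N+c)^2}=\CT_{z}\Bigl[z^{N}\sum_{k\in\mathbb{Z}}q^{(k+c)^2}z^{-k}\Bigr]$$
and then carry out the $i$- and $j$-sums by Euler's identities \eqref{Euler-2} (with $q\mapsto q^4$) and \eqref{Euler-1} (with $q\mapsto q^8$, $z\mapsto z^2$). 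The uniform simplification throughout will be the factorisation
$$(z^2;q^8)_\infty=(z;q^4)_\infty(-z;q^4)_\infty,$$
which cancels the $(-z;q^4)_\infty$ produced by the $i$-sum against the denominator coming from the $j$-sum, leaving a clean $1/(z;q^4)_\infty$.

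For \eqref{(4,8)-1} the exponent is $N^2+(2i^2-2i)$, so using the theta series $\sum_k q^{k^2}z^{-k}$ the two Eulerian factors collapse to $1/(z;q^4)_\infty=\sum_{n\geq 0}z^n/(q^4;q^4)_n$, whose constant-term pairing with the theta series gives $\sum_n q^{n^2}/(q^4;q^4)_n$; Slater's identity \eqref{Slater20} then produces the claimed product. For \eqref{(4,8)-3} the exponent rewrites as $(N^2+2N)+(2i^2-2i)$, so replacing the theta series by $\sum_k q^{k^2+2k}z^{-k}$ yields $\sum_n q^{n^2+2n}/(q^4;q^4)_n$ after the same cancellation, and \eqref{Slater16} finishes the job.

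The hard case is \eqref{(4,8)-2}. Here the exponent becomes $(N+1)^2+(2i^2-6i)$; because $2i^2-6i=(2i^2-2i)-4i$, the $i$-sum now produces $(-zq^{-4};q^4)_\infty=(1+zq^{-4})(-z;q^4)_\infty$ rather than $(-z;q^4)_\infty$, and after the usual cancellation (together with the shift $\sum_k q^{(k+1)^2}z^{-k}=z\sum_m q^{m^2}z^{-m}$) one is left with
$$q\,\CT_z\Bigl[\frac{z+z^2q^{-4}}{(z;q^4)_\infty}\sum_{m\in\mathbb{Z}}q^{m^2}z^{-m}\Bigr]=q\sum_{m\geq 0}\frac{q^{m^2+2m}}{(q^4;q^4)_m}+\sum_{m\geq 0}\frac{q^{m^2+4m}}{(q^4;q^4)_m}.$$
This is not itself a single Slater identity, which is the main obstacle. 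The decisive step will be the telescoping observation $(1-q^{4n})/(q^4;q^4)_n=1/(q^4;q^4)_{n-1}$ for $n\geq 1$: writing $q^{n^2}=q^{n^2}(1-q^{4n})+q^{n^2+4n}$ inside Slater's sum $\sum_n q^{n^2}/(q^4;q^4)_n$ and relabelling $n\mapsto m+1$ in the first piece produces precisely the right-hand side above. Thus the two sums collapse back to $\sum_n q^{n^2}/(q^4;q^4)_n$, a final application of \eqref{Slater20} delivers the product, and this also explains, a posteriori, why \eqref{(4,8)-1} and \eqref{(4,8)-2} share a right-hand side.
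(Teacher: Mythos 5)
Your argument is correct, and it takes a genuinely different route from the paper's. The paper sums over $j$ first, which turns each left-hand side into a single sum of the shape $\sum_i u^i q^{3i^2}(-vq^{4i+4};q^8)_\infty/(q^4;q^4)_i$; it then splits that sum according to the parity of $i$ and matches the even and odd pieces against two separate known single-sum identities in each case (Entry 11.2.7 of the Lost Notebook together with (S.\ 46) for \eqref{(4,8)-1}, Rogers' mod-$10$ identity together with (S.\ 15) for \eqref{(4,8)-2}, and (S.\ 44) together with (S.\ 19) for \eqref{(4,8)-3}), so six auxiliary identities are consumed in total. Your substitution $N=2j+i$ combined with the factorization $(z^2;q^8)_\infty=(z;q^4)_\infty(-z;q^4)_\infty$ collapses all three left-hand sides onto the two sums $\sum_n q^{n^2}/(q^4;q^4)_n$ and $\sum_n q^{n(n+2)}/(q^4;q^4)_n$, i.e.\ onto \eqref{Slater20} and \eqref{Slater16} alone, and the telescoping step $q^{n^2}=q^{n^2}(1-q^{4n})+q^{n^2+4n}$ that handles \eqref{(4,8)-2} is exactly right; it also makes transparent why \eqref{(4,8)-1} and \eqref{(4,8)-2} share a product side, which in the paper's proof appears as a coincidence of two different pairs of single-sum evaluations. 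One typographical slip: in your display for \eqref{(4,8)-2} the prefactor $q$ in front of $\CT_z$ should not be there, since the factor $z$ arising from $\sum_k q^{(k+1)^2}z^{-k}=z\sum_m q^{m^2}z^{-m}$ already accounts for the shift; the right-hand side of that display, which is what your telescoping argument actually uses, is the correct evaluation of the left-hand side of \eqref{(4,8)-2}.
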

\begin{proof}
We define
\begin{align}
F(u,v)=F(u,v;q):=\sum_{i,j\geq 0} \frac{u^iv^jq^{3i^2+4j^2+4ij}}{(q^4;q^4)_i(q^8;q^8)_j}.\nonumber
\end{align}
We have
\begin{align*}
	F(u,v)&=\sum_{i\geq 0}\frac{ u^iq^{3i^2}}{(q^4;q^4)_i}\sum_{j\geq 0}\frac{v^jq^{4j^2+4ij}}{(q^8;q^8)_j}=\sum_{i\geq 0}\frac{u^i q^{3i^2}}{(q^4;q^4)_i}(-vq^{4i+4};q^8)_\infty.  \nonumber
\end{align*}
Setting $(u,v)=(q^{-2},1)$, we have
\begin{align}
F(q^{-2},1)=\sum_{i\geq 0}\frac{q^{3i^2-2i}}{(q^4;q^4)_i}(-q^{4i+4};q^8)_\infty=F_1(q)+F_2(q), \label{5yue28-0}
\end{align}
where
\begin{align}
    F_1(q)&=\sum_{\begin{smallmatrix}i\geq 0 \\ i ~~\text{odd} \end{smallmatrix}}\frac{q^{3i^2-2i}}{(q^4;q^4)_i}(-q^{4i+4};q^8)_\infty=(-q^8;q^8)_\infty\sum_{i\geq 0} \frac{q^{12i^2+8i+1}}{(q^4;q^4)_{2i+1}(-q^8;q^8)_i},\label{5yue28-1}\\
    F_2(q)&=\sum_{\begin{smallmatrix}i\geq 0 \\ i ~~\text{even} \end{smallmatrix}}\frac{q^{3i^2-2i}}{(q^4;q^4)_i}(-q^{4i+4};q^8)_\infty=(-q^4;q^8)_\infty\sum_{i\geq 0} \frac{q^{12i^2-4i}}{(q^4;q^4)_{2i}(-q^4;q^8)_i}.\label{5yue28-2}
\end{align}
Replacing $q$ by $-q^4$ in \eqref{Ramanujan [10,Entry 11.2.7]}, we can get the product representation of $F_1(q)$.  Replacing $q$ by $q^8$ in \eqref{Slater46}, we get the product representation of $F_2(q)$. Substituting these representations into \eqref{5yue28-0}, we prove \eqref{(4,8)-1}.

Similarly, setting $(u,v)=(q^{-4},q^4)$, we use \eqref{(Rogers [37, p. 330 (2), line 1])} and \eqref{Slater15} to  prove \eqref{(4,8)-2}.  Setting $(u,v)=(1,q^4)$, we use \eqref{Slater44} and \eqref{Slater19} to prove \eqref{(4,8)-3}.
\end{proof}

\begin{theorem}
We have
\begin{align}
\sum_{i,j\geq 0}\frac{(-1)^iq^{i^2+4j^2-4ij+3i+2j}}{(q^4;q^4)_i(q^8;q^8)_j}&={(q^4;q^8)_\infty},\label{(4,8)-7}  \\
\sum_{i,j\geq 0}\frac{(-1)^jq^{i^2+4j^2-4ij+4i}}{(q^4;q^4)_i(q^8;q^8)_j}&={(q^4,q^8,q^{12};q^{16})_\infty},\label{(4,8)-8}  \\
\sum_{i,j\geq 0}\frac{(-1)^jq^{i^2+4j^2-4ij+6i-4j}}{(q^4;q^4)_i(q^8;q^8)_j}&={-q^3}{(q^4,q^8,q^{12};q^{16})_\infty}.\label{(4,8)-9}
\end{align}
\end{theorem}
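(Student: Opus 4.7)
The plan is to exploit the degeneracy $i^2+4j^2-4ij=(i-2j)^2$ common to all three identities. This splits the theorem into two cleanly tractable cases (identities \eqref{(4,8)-8} and \eqref{(4,8)-9}, handled by direct summation plus a parity collapse) and one obstacle case (identity \eqref{(4,8)-7}, requiring the constant term method).

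For \eqref{(4,8)-8} and \eqref{(4,8)-9}, I would sum over $j$ first: factor $q^{4j(j-1)}$ out of the $j$-dependent exponent and apply Euler's identity \eqref{Euler-2} (with $q\to q^8$, in the $(-1)^j$-twisted form $\sum_n(-x)^nq^{4n(n-1)}/(q^8;q^8)_n=(x;q^8)_\infty$) to collapse each inner sum to $(q^{4-4i};q^8)_\infty$ and $(q^{-4i};q^8)_\infty$ respectively. Each product contains a factor $1-q^0=0$ when $i$ has the wrong parity (odd for \eqref{(4,8)-8}, even for \eqref{(4,8)-9}), killing half the outer sum. Unpacking the negative-power factors in the survivors via $1-q^{-a}=-q^{-a}(1-q^a)$ yields the compact identity $(q^{4-8m};q^8)_\infty=(-1)^mq^{-4m^2}(q^4;q^8)_m(q^4;q^8)_\infty$ (and an analogous expression in the odd case). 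Combined with $(q^4;q^4)_{2m}=(q^4;q^8)_m(q^8;q^8)_m$, the outer sum then reduces to $\sum_m(-q^8)^m/(q^8;q^8)_m=1/(-q^8;q^8)_\infty=(q^8;q^{16})_\infty$ (by \eqref{Euler-1} together with $(a;q)_\infty(-a;q)_\infty=(a^2;q^2)_\infty$), giving $(q^4;q^8)_\infty(q^8;q^{16})_\infty=(q^4,q^8,q^{12};q^{16})_\infty$ for \eqref{(4,8)-8} and $-q^3(q^4,q^8,q^{12};q^{16})_\infty$ for \eqref{(4,8)-9} (the $-q^3$ arises from the linear exponent shift $8m+3$ in the odd case).

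The main obstacle is identity \eqref{(4,8)-7}: the analogous $j$-sum now gives $(-q^{6-4i};q^8)_\infty$, which never vanishes, so both parities of $i$ contribute nontrivially and the parity-collapse strategy fails. I would instead apply the constant term method. Write $q^{(i-2j)^2}=\CT_z[z^{2j-i}\sum_k q^{k^2}z^k]$ using the Jacobi triple product $\sum_k q^{k^2}z^k=(q^2,-qz,-q/z;q^2)_\infty$, then apply Euler \eqref{Euler-1} to both the $i$- and $j$-sums. The elementary factorizations $(-qz;q^2)_\infty=(-qz;q^4)_\infty(-q^3z;q^4)_\infty$, $(-q/z;q^2)_\infty=(-q/z;q^4)_\infty(-q^3/z;q^4)_\infty$, and $(q^2z^2;q^8)_\infty=(qz;q^4)_\infty(-qz;q^4)_\infty$ produce massive cancellation, reducing the expression to
\begin{align*}
S_1=\CT_z\left[\frac{(q^2;q^2)_\infty(-q^3z;q^4)_\infty(-q/z;q^4)_\infty}{(qz;q^4)_\infty}\right].
\end{align*}
A second application of Jacobi's triple product (with base $q^4$, $x=-q^3z$) converts the numerator theta factor into $(q^4;q^4)_\infty^{-1}\sum_k q^{2k^2+k}z^k$, while $1/(qz;q^4)_\infty=\sum_m q^m z^m/(q^4;q^4)_m$ by Euler; the constant term then picks out the $k=-m$ contribution, yielding $S_1=(q^2;q^4)_\infty\sum_m q^{2m^2}/(q^4;q^4)_m$. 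The residual single sum evaluates to $(-q^2;q^4)_\infty$ by Euler's \eqref{Euler-2} applied with $q\to q^4$ and $z=q^2$, and the difference-of-squares $(q^2;q^4)_\infty(-q^2;q^4)_\infty=(q^4;q^8)_\infty$ completes the proof.
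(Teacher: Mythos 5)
Your proposal is correct, and the interesting divergence from the paper is in \eqref{(4,8)-7}. For \eqref{(4,8)-8} and \eqref{(4,8)-9} your argument is essentially the paper's in different clothing: the paper first writes a $2$-dissection $S_0(q^2)+qS_1(q^2)$ by the parity of $i$ and then sums over $j$ to produce $(q^{1-2i};q^2)_\infty$ resp.\ $(q^{-2i};q^2)_\infty$ (after $q\to q^{1/2}$), one of which vanishes identically; you sum over $j$ first in the original variable and observe the same vanishing, then finish with the same Euler collapse $\sum_m(-q^8)^m/(q^8;q^8)_m$ — the mechanism (inner theta kills one parity class, survivor reduces to a geometric Euler sum) is identical, and your closed form $(q^{4-8m};q^8)_\infty=(-1)^mq^{-4m^2}(q^4;q^8)_m(q^4;q^8)_\infty$ is the explicit version of the paper's factorization. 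For \eqref{(4,8)-7}, however, both you and the paper use the constant term method but land on genuinely different residual sums: the paper massages the integrand into $(q^2;q^2)_\infty\sum_n q^{2n^2}(-q^2;q^4)_n/(q^4;q^4)_n^2$ and must then invoke Ramanujan's Entry 4.2.6 (identity \eqref{Ramanujan[10,Entry 4.2.6]} with $q\to-q^2$), a nontrivial known single-sum evaluation; your cancellation pattern, which consumes $(q^2z^2;q^8)_\infty=(qz;q^4)_\infty(-qz;q^4)_\infty$ against the Jacobi factors, leaves only $(q^2;q^4)_\infty\sum_m q^{2m^2}/(q^4;q^4)_m$, and the residual sum is just Euler's \eqref{Euler-2}. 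So your route is self-contained — nothing beyond Euler and Jacobi — where the paper's depends on an external entry from the Lost Notebook; the trade-off is that the paper's uniform substitution scheme in its two-parameter family $F(u,v)$ generates many identities at once, while your cancellation is tailored to this particular exponent.
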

\begin{proof}
Let
\begin{align}
    F(u,v)=F(u,v;q):=\sum_{i,j\geq 0} \frac{(-1)^iu^iv^jq^{i^2+4j^2-4ij-i+2j}}{(q^4;q^4)_i(q^8;q^8)_j}=\sum_{i,j\geq 0} \frac{(-1)^iu^iv^jq^{(i-2j)(i-2j-1)}}{(q^4;q^4)_i(q^8;q^8)_j}.
\end{align}
We have
\begin{align}
&F(u,v)=\CT \left[\sum_{i\geq 0} \frac{u^iz^i}{(q^4;q^4)_i} \sum_{j\geq 0} \frac{v^jz^{-2j}}{(q^8;q^8)_j} \sum_{k=-\infty}^\infty (-1)^kz^{-k}q^{k^2-k} \right] \nonumber \\
&=\mathrm{CT}\left[\frac{(z^{-1},q^2z,q^2;q^2)_\infty}{(uz;q^4)_\infty (vz^{-2};q^8)_\infty} \right].
\end{align}
Let $(u,v)=(q^4,1)$. We have
\begin{align*}    &F(q^4,1)=\CT\left[\frac{(z^{-1},q^2z,q^2;q^2)_\infty}{(q^4z;q^4)_\infty (z^{-2};q^8)_\infty} \right] \nonumber \\
    &=(q^2; q^2)_\infty \CT\left[\frac{(z^{-1};q^2)_\infty (q^2z;q^2)_\infty}{(z^{-1};q^4)_\infty (-z^{-1};q^4)_\infty (q^4z;q^4)_\infty} \right]\nonumber \\
    &=(q^2;q^2)_\infty \CT\left[\frac{(z^{-1}q^2;q^4)_\infty (q^2z;q^4)_\infty}{(-z^{-1};q^4)_\infty} \right] \nonumber \\
    &=(q^2;q^2)_\infty \CT \left[\sum_{n\geq 0} \frac{(-1)^nq^{2n^2}z^n}{(q^4;q^4)_n} \sum_{k=0}^\infty \frac{(-q^2;q^4)_k}{(q^4;q^4)_k}(-z^{-1})^k \right]  \quad (\text{by \eqref{Euler-2} and \eqref{eq-qbinomial}})\nonumber \\
    &=(q^2;q^2)_\infty \sum_{n=0}^\infty \frac{q^{2n^2}(-q^2;q^4)_n}{(q^4;q^4)_n^2}.
\end{align*}
Now using \eqref{Ramanujan[10,Entry 4.2.6]} we obtain \eqref{(4,8)-7}.

Next, we write the 2-dissection formula:
\begin{align}\label{24-8-split}
    \sum_{i,j\geq 0} \frac{(-1)^jq^{i^2+4j^2-4ij+4i}}{(q^4;q^4)_i(q^8;q^8)_j}=S_0(q^2)+qS_1(q^2),
\end{align}
where
\begin{align}
    S_0(q^2)&=\sum_{i,j\geq 0} \frac{(-1)^jq^{4i^2+4j^2-8ij+8i}}{(q^4;q^4)_{2i}(q^8;q^8)_j},\\
    S_1(q^2)&=\sum_{i,j\geq 0} \frac{(-1)^jq^{4i^2+12i+4-8ij+4j^2-4j}}{(q^4;q^4)_{2i+1}(q^8;q^8)_j}.
\end{align}
We have
\begin{align}
&S_0(q^{\frac{1}{2}})=\sum_{i\geq 0} \frac{q^{i^2+2i}}{(q;q)_{2i}} \sum_{j\geq 0} \frac{(-1)^jq^{j^2-j}\cdot q^{(1-2i)j}}{(q^2;q^2)_j} =\sum_{i\geq 0} \frac{q^{i^2+2i}}{(q;q)_{2i}}(q^{1-2i};q^2)_\infty \nonumber \\
&=(q;q^2)_\infty \sum_{i\geq 0} \frac{(-1)^i q^{2i}(q;q^2)_i}{(q;q)_{2i}} =(q;q^2)_\infty \sum_{i\geq 0} \frac{(-1)^i q^{2i}}{(q^2;q^2)_i} =\frac{(q;q)_\infty}{(q^4;q^4)_\infty}.   \label{S0-result}
\end{align}
Similarly,
\begin{align}
&S_1(q^{\frac{1}{2}})=q\sum_{i\geq 0} \frac{(-1)^jq^{i^2+3i+j^2-2ij-j}}{(q;q)_{2i+1}(q^2;q^2)_j} =q\sum_{i\geq 0} \frac{q^{i^2+3i}}{(q;q)_{2i+1}} \sum_{j\geq 0} \frac{(-1)^jq^{j^2-j}\cdot q^{-2ij}}{(q^2;q^2)_j} \nonumber \\
&=q\sum_{i\geq 0} \frac{q^{i^2+3i}}{(q;q)_{2i+1}} (q^{-2i};q^2)_\infty =0. \label{S1-result}
\end{align}
Substituting \eqref{S0-result} and \eqref{S1-result} into \eqref{24-8-split}, we prove \eqref{(4,8)-8}.

In the same way, we write the 2-dissection formula
\begin{align}\label{24-9-split}
    \sum_{i,j\geq 0}
    \frac{(-1)^j q^{i^2+4j^2-4ij+6i-4j}}{(q^4;q^4)_{i}(q^8;q^8)_j}=T_0(q^2)+qT_1(q^2),
\end{align}
where
\begin{align}
    T_0(q^2)&=\sum_{i,j\geq 0} \frac{(-1)^jq^{4i^2+4j^2-8ij+12i-4j}}{(q^4;q^4)_{2i}(q^8;q^8)_j}, \\
    T_1(q^2)&=\sum_{i,j\geq 0} \frac{(-1)^jq^{4i^2+16i+4j^2-8ij-8j+6}}{(q^4;q^4)_{2i+1}(q^8;q^8)_j}.
\end{align}
We have
\begin{align}
    T_0(q^{\frac{1}{2}})&=\sum_{i\geq 0} \frac{q^{i^2+3i}}{(q;q)_{2i}} \sum_{j\geq 0} \frac{(-1)^jq^{j^2-j-2ij}}{(q^2;q^2)_j}=\sum_{i\geq 0}\frac{q^{i^2+3i}}{(q;q)_{2i}} (q^{-2i};q^2)_\infty=0. \label{T0-result}
\end{align}
Similarly,
\begin{align}
  &  T_1(q^{\frac{1}{2}})=q^{\frac{1}{2}}\sum_{i,j\geq 0} \frac{(-1)^jq^{i^2+4i+j^2-2ij-2j+1}}{(q;q)_{2i+1}(q^2;q^2)_j} =q^{\frac{1}{2}}\sum_{i\geq 0} \frac{q^{i^2+4i+1}}{(q;q)_{2i+1}} \sum_{j\geq 0} \frac{(-1)^jq^{j^2-j}q^{-(2i+1)j}}{(q^2;q^2)_j} \nonumber \\
    &=q^{\frac{1}{2}}\sum_{i\geq 0} \frac{q^{i^2+4i+1}}{(q;q)_{2i+1}} (q^{-2i-1};q^2)_\infty=-q^{\frac{1}{2}}(q;q^2)_\infty \sum_{i\geq 0} \frac{(-1)^{i}q^{2i}}{(q^2;q^2)_i} =-q^{\frac{1}{2}}\frac{(q;q)_\infty}{(q^4;q^4)_\infty}. \label{T1-result}
\end{align}
Substituting \eqref{T0-result} and \eqref{T1-result} into \eqref{24-9-split}, we obtain \eqref{(4,8)-9}.
\end{proof}
\begin{rem}
For the left side of \eqref{(4,8)-7}, if we sum over $j$ using \eqref{Euler-2} first and then separate the sums according to the parity of $i$, we obtain (after replacing $q$ by $q^{1/2}$)
\begin{align}
(-q^3;q^4)_\infty\sum_{n\geq0}\frac{q^{4n}(-q;q^4)_n}{(q^2;q^2)_{2n}}-(-q;q^4)_\infty\sum_{n\geq0}\frac{q^{4n+2}(-q^3;q^4)_n}{(q^2;q^2)_{2n+1}}&=(q^2;q^4)_\infty. \nonumber
\end{align}
If one can prove the above identity directly, then we have a new proof for \eqref{(4,8)-7}.
\end{rem}

\subsection{Identities of index $(1,4)$}

\begin{theorem}\label{thm-(1,4)}
	We have
	\begin{align}
		\sum_{i,j\geq 0}\frac{q^{3i^2/2+4j^2+4ij-i/2+2j}}{(q;q)_i(q^4;q^4)_j}&=\frac{1}{(q,q^5,q^6;q^8)_\infty}.\label{(1,4)-1}
\end{align}
\end{theorem}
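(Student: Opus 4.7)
The plan is to use the constant term method, following the same template as Theorems~\ref{thm-14} and \ref{thm-11}, but with an extra algebraic simplification driven by the fact that the product side coincides with the right-hand side of Göllnitz's identity \eqref{Gollnitz-(2.22)}. The first key observation is that the exponent factors nicely: a direct calculation gives
\begin{equation*}
\tfrac{3}{2}i^2+4j^2+4ij-\tfrac{1}{2}i+2j=\binom{i}{2}+(i+2j)^2+2j.
\end{equation*}
This rewriting separates a pure quadratic square $(i+2j)^2$, which is the ingredient we will linearize using Jacobi's triple product \eqref{Jacobi}. Specifically, from \eqref{Jacobi} one deduces $\sum_{k\in\mathbb{Z}}q^{k^2}z^k=(q^2;q^2)_\infty(-qz,-q/z;q^2)_\infty$, so that $q^{(i+2j)^2}=(q^2;q^2)_\infty\,\CT\bigl[z^{-i-2j}(-qz,-q/z;q^2)_\infty\bigr]$.

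Substituting this into the sum and interchanging summation with $\CT$, the $i$-sum collapses via \eqref{Euler-2} to $(-z^{-1};q)_\infty$ and the $j$-sum via \eqref{Euler-1} to $1/(q^2z^{-2};q^4)_\infty$. Thus
\begin{equation*}
\text{LHS}=(q^2;q^2)_\infty\,\CT\left[(-qz,-q/z;q^2)_\infty\,\frac{(-z^{-1};q)_\infty}{(q^2z^{-2};q^4)_\infty}\right].
\end{equation*}
Now split $(-z^{-1};q)_\infty=(-z^{-1},-qz^{-1};q^2)_\infty$ and use the elementary factorization $(q^2z^{-2};q^4)_\infty=(qz^{-1},-qz^{-1};q^2)_\infty$; the factor $(-qz^{-1};q^2)_\infty$ cancels, leaving
\begin{equation*}
\text{LHS}=(q^2;q^2)_\infty\,\CT\left[(-qz,-q/z;q^2)_\infty\,\frac{(-z^{-1};q^2)_\infty}{(qz^{-1};q^2)_\infty}\right].
\end{equation*}

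The final step is to recognize both pieces. Applying the $q$-binomial theorem \eqref{eq-qbinomial} with base $q^2$, variable $qz^{-1}$ and parameter $a=-q^{-1}$ gives
\begin{equation*}
\frac{(-z^{-1};q^2)_\infty}{(qz^{-1};q^2)_\infty}=\sum_{n\geq 0}\frac{(-q^{-1};q^2)_n q^{n}z^{-n}}{(q^2;q^2)_n},
\end{equation*}
while a second application of Jacobi's triple product turns $(q^2;q^2)_\infty(-qz,-q/z;q^2)_\infty$ back into $\sum_{k\in\mathbb{Z}}q^{k^2}z^k$. Extracting the constant term matches $k=n$, and the left-hand side becomes precisely $\sum_{n\geq 0}\frac{q^{n^2+n}(-q^{-1};q^2)_n}{(q^2;q^2)_n}$, which equals $1/(q,q^5,q^6;q^8)_\infty$ by Göllnitz's identity \eqref{Gollnitz-(2.22)}.

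The only step that requires genuine insight is the first one: noticing the decomposition $\binom{i}{2}+(i+2j)^2+2j$ so that a single Jacobi linearization in the variable $i+2j$ decouples the cross term $4ij$. After that, every subsequent manipulation is a mechanical use of Euler, $q$-binomial, and Jacobi, together with the factorization $(q^2z^{-2};q^4)_\infty=(qz^{-1};q^2)_\infty(-qz^{-1};q^2)_\infty$ that causes the right cancellation to produce Göllnitz's summand.
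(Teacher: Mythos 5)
Your proof is correct and takes essentially the same approach as the paper's: both linearize the square $(i+2j)^2$ via the Jacobi triple product, simplify the resulting infinite products so that the $q$-binomial theorem with parameter $-q^{-1}$ applies, and extract the constant term to arrive at the sum side of G\"ollnitz's identity \eqref{Gollnitz-(2.22)}. The only differences are cosmetic: the paper uses the theta series $\sum_k(-1)^kq^{k^2-k}z^{-k}$ and attaches the Euler products to $z$ rather than $z^{-1}$, but the cancellation $(q^2z^{-2};q^4)_\infty=(qz^{-1},-qz^{-1};q^2)_\infty$ you exploit is the mirror image of the paper's step $(zq;q)_\infty/(q^4z^2;q^4)_\infty=(zq;q^2)_\infty/(-zq^2;q^2)_\infty$.
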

This identity gives a companion to the identities (21)--(24) in \cite{Kursungoz}.
\begin{proof}
We define
\begin{align}
F(u,v)=F(u,v;q):=\sum_{i,j\geq 0} \frac{u^iv^jq^{3i^2/2+4j^2+4ij-3i/2-2j}}{(q;q)_i(q^4;q^4)_j}.\nonumber
\end{align}
By \eqref{Euler-1}, \eqref{Euler-2} and \eqref{Jacobi} we have
\begin{align}
F(u,v)&=\CT \left[ \sum_{i\geq 0}\frac{(-1)^iu^i z^i q^{i^2/2-i/2}}{(q;q)_i} \sum_{j\geq 0}\frac{v^j z^{2j}}{(q^4;q^4)_j}  \sum_{k=-\infty}^\infty (-1)^kq^{k^2-k}z^{-k} \right]  \nonumber \\
&=\CT \left[ \frac{(uz;q)_\infty}{(vz^2;q^4)_\infty}\sum_{k=-\infty}^\infty (-1)^kq^{k^2-k}z^{-k} \right]. \nonumber
\end{align}
Setting $(u,v)=(q,q^4)$, we have
\begin{align}
&F(q,q^4)=\CT \left[ \frac{(zq;q^2)_\infty}{{(-zq^2;q^2)_\infty}}\sum_{k=-\infty}^\infty (-1)^kq^{k^2-k}z^{-k} \right] \nonumber \\
&=\CT \left[ \sum_{n=0}^\infty \frac{(-1/q;q^2)_n}{{(q^2;q^2)_n}}(-zq^2)^n \sum_{k=-\infty}^\infty (-1)^kq^{k^2-k}z^{-k} \right] =\sum_{n=0}^\infty \frac{(-1/q;q^2)_n}{{(q^2;q^2)_n}}q^{n^2+n} .\nonumber
\end{align}
By \eqref{Gollnitz-(2.22)} we get \eqref{(1,4)-1}.
\end{proof}

\begin{theorem}\label{thm-15}
We have
\begin{align}
\sum_{i,j\geq 0}\frac{q^{i^2+6j^2-4ij-i+2j}}{(q^2;q^2)_i(q^8;q^8)_j}&=\frac{2}{(q^2;q^4)_\infty(q^2,q^8,q^{14};q^{16})_\infty}, \label{(2,8)-1}\\
\sum_{i,j\geq 0}\frac{q^{i^2+6j^2-4ij-i+4j}}{(q^2;q^2)_i(q^8;q^8)_j}&=\frac{2}{(q^2;q^4)_\infty(q^4,q^6,q^{14};q^{16})_\infty}, \label{(2,8)-2}\\
\sum_{i,j\geq 0}\frac{q^{i^2+6j^2-4ij-i+6j}}{(q^2;q^2)_i(q^8;q^8)_j}&=\frac{2}{(q^2;q^4)_\infty(q^6,q^8,q^{10};q^{16})_\infty}. \label{(2,8)-3}
\end{align}
\end{theorem}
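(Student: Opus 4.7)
The plan is to adapt the master-sum strategy used in the proof of Theorem~\ref{thm-11}. Define
\begin{align*}
F(u,v):=\sum_{i,j\geq 0}\frac{u^iv^jq^{i^2+6j^2-4ij}}{(q^2;q^2)_i(q^8;q^8)_j},
\end{align*}
so that \eqref{(2,8)-1}, \eqref{(2,8)-2} and \eqref{(2,8)-3} are the specializations $F(q^{-1},q^2)$, $F(q^{-1},q^4)$, $F(q^{-1},q^6)$, respectively. Writing $i^2-4ij=i(i-1)+(1-4j)i$ and applying \eqref{Euler-2} with $q\mapsto q^2$ and $z=uq^{1-4j}$, the inner sum in $i$ collapses to $(-uq^{1-4j};q^2)_\infty$, and hence
\begin{align*}
F(u,v)=\sum_{j\geq 0}\frac{v^jq^{6j^2}}{(q^8;q^8)_j}(-uq^{1-4j};q^2)_\infty.
\end{align*}

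The second step is to split off the $j$-stable tail of this Pochhammer from its non-positive-exponent part. Pulling $uq^{-(2m-1)}$ out of each of the first $2j$ factors and using $\sum_{m=1}^{2j}(2m-1)=4j^2$, one gets the clean factorization
\begin{align*}
(-uq^{1-4j};q^2)_\infty=u^{2j}q^{-4j^2}(-u^{-1}q;q^2)_{2j}(-uq;q^2)_\infty.
\end{align*}
Substituting and specializing $u=q^{-1}$, the prefactor becomes $(-uq;q^2)_\infty=(-1;q^2)_\infty=2(-q^2;q^2)_\infty$, the finite Pochhammer becomes $(-q^2;q^2)_{2j}$, and $q^{6j^2}q^{-4j^2}=q^{2j^2}$. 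The elementary simplification $(-q^2;q^2)_{2j}/(q^8;q^8)_j=(-q^2;q^4)_j/(q^4;q^4)_j$ (verified by the routine factorization $(-q^2;q^2)_{2j}=(-q^2;q^4)_j(-q^4;q^4)_j$ and $(q^8;q^8)_j=(q^4;q^4)_j(-q^4;q^4)_j$) then produces the compact form
\begin{align*}
F(q^{-1},v)=2(-q^2;q^2)_\infty\sum_{j\geq 0}\frac{(vq^{-2})^jq^{2j^2}(-q^2;q^4)_j}{(q^4;q^4)_j}.
\end{align*}

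Finally, for $v=q^2,q^4,q^6$ the remaining single sum is, respectively, the $q\mapsto q^2$ version of \eqref{Slater36}, \eqref{Gollnitz-(2.24)} and \eqref{Slater34}, evaluating to $1/(q^2,q^8,q^{14};q^{16})_\infty$, $1/(q^4,q^6,q^{14};q^{16})_\infty$ and $1/(q^6,q^8,q^{10};q^{16})_\infty$. Combining these with the elementary identity $(-q^2;q^2)_\infty=1/(q^2;q^4)_\infty$ yields exactly the three claimed right-hand sides. The only non-mechanical step is the Pochhammer factorization; it is elementary once one carefully reindexes the negative-exponent factors, and no tools beyond those collected in Section~\ref{sec-pre} are needed.
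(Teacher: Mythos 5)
Your proof is correct and follows exactly the route the paper indicates: sum over $i$ with \eqref{Euler-2}, normalize the resulting Pochhammer symbol, and invoke \eqref{Slater36}, \eqref{Gollnitz-(2.24)} and \eqref{Slater34} with $q\mapsto q^2$ (the paper omits these details, but your factorization and the final reductions all check out, including the $u^{2j}=q^{-2j}$ factor absorbed into $(vq^{-2})^j$).
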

\begin{proof}
Summing over $i$ using \eqref{Euler-2} first, the identities can be proved using \eqref{Slater36}, \eqref{Gollnitz-(2.24)} and \eqref{Slater34}, respectively. We omit the details.
\end{proof}

\section{Identities involving triple sums}\label{sec-triple}
In this section, we present some new Rogers--Ramanujan type identities involving triple sums. They are of indices $(1,1,1)$, $(1,1,2)$, $(1,1,3)$, $(1,2,2)$ and $(1,2,4)$ and serve as new companions to some identities in the literature.

\subsection{Identities of index $(1,1,1)$}
We find some new companions to \cite[Theorem 4.1]{Cao-Wang} (see \eqref{(2,2,2)-1} below).
\begin{theorem}\label{thm-7}
	We have
	\begin{align}
 \sum_{i,j,k\geq 0} \frac{q^{i^2+2j^2+k^2+2ij+2ik+2jk}a^{i+j}b^{k-i}}{(q^2;q^2)_i(q^2;q^2)_j(q^2;q^2)_k} &={(-aq/b,-bq;q^2)_\infty}, \quad \text{(\cite{Lovejoy})} \label{(2,2,2)-1}\\
\sum_{i,j,k\geq 0} \frac{{(-1)^k}q^{i^2+2j^2+k^2+2ij+2ik+2jk}}{(q^2;q^2)_i(q^2;q^2)_j(q^2;q^2)_k}&=\frac{(q^6,q^6,q^{12};q^{12})_\infty}{(q^2;q^2)_\infty}, \quad \text{(\cite[Eq.\ (5.25)]{Sills2003})} \label{(2,2,2)-2} \\
\sum_{i,j,k\geq 0} \frac{q^{i^2+2j^2+k^2+2ij+2ik+2jk+i+2j+2k}}{(q^2;q^2)_i(q^2;q^2)_j(q^2;q^2)_k}&=\frac{(q,q^5,q^6;q^6)_\infty}{(q;q)_\infty}, \quad \text{(\cite[Eq.\ (5.120)]{Sills2003})} \label{(2,2,2)-3} \\
\sum_{i,j,k\geq 0} \frac{q^{i^2+2j^2+k^2+2ij+2ik+2jk+i}}{(q^2;q^2)_i(q^2;q^2)_j(q^2;q^2)_k}&=\frac{(q^3,q^3,q^6;q^6)_\infty}{(q;q)_\infty}, \label{(2,2,2)-4} \\
\sum_{i,j,k\geq 0} \frac{{(-1)^{i}}q^{i^2+2j^2+k^2+2ij+2ik+2jk+i}a^{i+j}b^k}{(q^2;q^2)_i(q^2;q^2)_j(q^2;q^2)_k}&={(-bq;q^2)_\infty}, \label{(2,2,2)-5}\\
\sum_{i,j,k\geq 0} \frac{q^{i^2+2j^2+k^2+2ij+2ik+2jk-i-k}}{(q^2;q^2)_i(q^2;q^2)_j(q^2;q^2)_k}&=\frac{3}{{(q^2;q^4)^2_\infty}}, \label{(2,2,2)-6} \\
\sum_{i,j,k\geq 0} \frac{(-1)^iq^{i^2+2j^2+k^2+2ij+2ik+2jk+i-2j-k}}{(q^2;q^2)_i(q^2;q^2)_j(q^2;q^2)_k}&=\frac{3}{{(q^2;q^4)_\infty}}, \label{(2,2,2)-8} \\
\sum_{i,j,k\geq 0} \frac{(-1)^iq^{i^2+2j^2+k^2+2ij+2ik+2jk+2i+2j+k}}{(q^2;q^2)_i(q^2;q^2)_j(q^2;q^2)_k}&=\frac{(q^3,q^9,q^{12};q^{12})_\infty}{{(q^2;q^2)_\infty}}, \label{(2,2,2)-10} \\
\sum_{i,j,k\geq 0} \frac{(-1)^iq^{i^2+2j^2+k^2+2ij+2ik+2jk+2i+2j+2k}}{(q^2;q^2)_i(q^2;q^2)_j(q^2;q^2)_k}&=\frac{1}{{(q^4,q^6,q^{8};q^{12})_\infty}}. \label{(2,2,2)-11}
\end{align}
\end{theorem}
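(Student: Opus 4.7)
The plan is to apply the constant term method (as in Section~\ref{sec-double}) uniformly, noting that identities \eqref{(2,2,2)-1}--\eqref{(2,2,2)-3} are already in the literature (Lovejoy and Sills), so only the seven remaining identities need new proofs. The structural observation underlying all cases is the factorization
\[
i^2+2j^2+k^2+2ij+2ik+2jk=(i+j+k)^2+j^2.
\]
Since $i^2$ and $k^2$ enter symmetrically with coefficient one, the $k$-sum admits the closed form
\[
\sum_{k\ge 0}\frac{q^{k^2+(2i+2j+\gamma)k}z^k}{(q^2;q^2)_k}=(-zq^{2i+2j+\gamma+1};q^2)_\infty
\]
by \eqref{Euler-2} at base $q^2$, so summing over $k$ first is the natural first step and reduces every triple sum to a double sum in $(i,j)$ weighted by an infinite product that depends on $i+j$ through $(-zq;q^2)_\infty/(-zq;q^2)_{i+j}$.

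For the scalar identities \eqref{(2,2,2)-4}, \eqref{(2,2,2)-6}, \eqref{(2,2,2)-8}, \eqref{(2,2,2)-10}, and \eqref{(2,2,2)-11}, I would then attack the resulting $(i,j)$-sum with the constant term machinery used in the proof of Theorem~\ref{thm-3}: encode the residual quadratic $i^2+2j^2+2ij+\text{linear}$ via Jacobi triple product \eqref{Jacobi}, decouple the $2ij$ cross term using an auxiliary variable, and collapse the $i$- and $j$-sums via \eqref{Euler-1}. This presents each identity as the constant term of a rational product of infinite products, and taking the residue in the auxiliary variable reduces matters to a single sum which should match one of the identities collected in Section~\ref{sec-pre}; the level-$6$ and level-$12$ products appearing on the right-hand sides make \eqref{Slater25}, \eqref{Wang-(1.22)}, and \eqref{Ramanujan [10,Entry 5.3.7]} the natural targets, while the sign $(-1)^i$ in \eqref{(2,2,2)-8}--\eqref{(2,2,2)-11} is absorbed by flipping one of the Euler factors.

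For the parameter identity \eqref{(2,2,2)-5}, coefficient extraction in $a$ is more direct. After summing over $k$ and using $(-bq^{2i+2j+1};q^2)_\infty=(-bq;q^2)_\infty/(-bq;q^2)_{i+j}$, the claim reduces to showing
\[
\sum_{i+j=n}\frac{(-1)^{i}q^{i^2+2j^2+2ij+i}}{(q^2;q^2)_i(q^2;q^2)_j}=\delta_{n,0}
\qquad\text{for every }n\ge 0.
\]
Using $i^2+2j^2+2ij=n^2+j^2$ and setting $i=n-j$, the left-hand side becomes a multiple of $\sum_{j=0}^{n}(-1)^{j}q^{j(j-1)}{n\brack j}_{q^2}$, which by the finite Jacobi identity \eqref{finite-Jacobi} at base $q^2$ (with $z=-1$) equals $(1;q^2)_n$; this vanishes for $n\ge 1$, yielding the required $\delta_{n,0}$. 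The principal obstacle is not conceptual but bookkeeping: each of the seven new identities has its own linear shift, and several carry the sign $(-1)^i$, so one must be careful to match the resulting single-variable residue against the correct identity from Section~\ref{sec-pre} and to track sign and parity choices through the successive Euler and Jacobi collapses.
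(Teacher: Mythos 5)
Your first step agrees with the paper's: the paper likewise exploits $i^2+2j^2+k^2+2ij+2ik+2jk=(i+j+k)^2+j^2$ and begins by summing over a coefficient-one variable with \eqref{Euler-2} (it sums over $i$ where you sum over $k$; the quadratic form is symmetric in $i\leftrightarrow k$), arriving at a double sum weighted by $1/(-uq;q^2)_{j+k}$. Your treatment of \eqref{(2,2,2)-5} is complete and correct: grouping by $n=i+j$, using $i^2+2j^2+2ij=n^2+j^2$, and invoking \eqref{finite-Jacobi} at base $q^2$ with $z=-1$ to produce $(1;q^2)_n$, which vanishes for $n\ge 1$, is a clean and more elementary alternative to the paper's route through the $q$-Gauss summation \eqref{eq-Gauss}.

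The gap is in your plan for the five scalar identities. After the $k$-summation the summand carries the factor $1/(-zq;q^2)_{i+j}$, and this defeats the constant-term scheme you describe: decoupling the $2ij$ cross term by an auxiliary variable and collapsing the $i$- and $j$-sums separately via \eqref{Euler-1} requires the remaining summand to factor as a function of $i$ times a function of $j$, which it does not because of the $(i+j)$-coupled Pochhammer in the denominator. The device the paper actually uses --- and which you yourself apply to \eqref{(2,2,2)-5} but then abandon --- is to group by $n=i+j$ and evaluate the inner $j$-sum by \eqref{finite-Jacobi}, giving the uniform reduction
\[
F(u,v,w)=(-uq;q^2)_\infty\sum_{n\ge0}\frac{w^nq^{n^2}\,(-vq/w;q^2)_n}{(-uq;q^2)_n\,(q^2;q^2)_n},
\]
after which each case is a specialization matched against a known single sum. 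Your proposed targets are also off for several cases: \eqref{(2,2,2)-4} does come from \eqref{Slater25}, but \eqref{(2,2,2)-10} and \eqref{(2,2,2)-11} require \eqref{Slater28} and \eqref{Slater50}, while \eqref{(2,2,2)-6} and \eqref{(2,2,2)-8}, whose right sides are $3/(q^2;q^4)_\infty^2$ and $3/(q^2;q^4)_\infty$, are not of level $6$ or $12$ at all and are handled by a direct two-term evaluation and by the transformation \eqref{(Laughlin-(6.1.3))}, respectively. Until you either carry the $n=i+j$ grouping through all cases or explain how the constant-term method absorbs the coupled Pochhammer, the scalar identities remain unproved.
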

\begin{proof}
We define
\begin{align}\label{111-F-defn}
F(u,v,w)=F(u,v,w;q^2):=\sum_{i,j,k\geq 0} \frac{u^iv^jw^kq^{i^2+2j^2+k^2+2ij+2ik+2jk}}{(q^2;q^2)_i(q^2;q^2)_j(q^2;q^2)_k}.
\end{align}
Arguing similarly as in \cite[p.\ 76]{Sills2003}, we have
\begin{align}
&F(u,v,w;q^2)=\sum_{j,k\geq 0}\frac{v^jw^k q^{2j^2+k^2+2jk}}{(q^2;q^2)_j(q^2;q^2)_k}\sum_{i\geq 0}\frac{u^i q^{i^2+2ij+2ik}}{(q^2;q^2)_i} \nonumber \\
&=\sum_{j,k\geq 0}\frac{v^jw^k q^{2j^2+k^2+2jk}}{(q^2;q^2)_j(q^2;q^2)_k}{(-uq^{2j+2k+1};q^2)_\infty}\nonumber \\
&={(-uq;q^2)_\infty}\sum_{j,k\geq 0}\frac{v^jw^k q^{2j^2+k^2+2jk}}{(q^2;q^2)_j(q^2;q^2)_k(-uq;q^2)_{j+k}}\nonumber \\
&={(-uq;q^2)_\infty}\sum_{j=0}^\infty\sum_{n=j}^\infty\frac{w^n{(v/w)}^j q^{n^2+j^2}}{(q^2;q^2)_j(q^2;q^2)_{n-j}(-uq;q^2)_n}\nonumber \\
&={(-uq;q^2)_\infty}\sum_{n=0}^\infty\frac{w^nq^{n^2}}{(-uq;q^2)_n}\sum_{j=0}^n\frac{{(v/w)}^j q^{j^2}}{(q^2;q^2)_j(q^2;q^2)_{n-j}}\nonumber \\
&={(-uq;q^2)_\infty}\sum_{n=0}^\infty\frac{w^nq^{n^2}}{(-uq;q^2)_n(q^2;q^2)_n}\sum_{j=0}^n (v/w)^j q^{j^2} {n\brack j}_{q^2}\nonumber \\
&={(-uq;q^2)_\infty}\sum_{n=0}^\infty\frac{w^nq^{n^2}}{(-uq;q^2)_n(q^2;q^2)_n}(-vq/w;q^2)_n.\label{111-F-start}
\end{align}	
 Here for the last second equality we used \eqref{finite-Jacobi}.
It follows from \eqref{111-F-start} and \eqref{Euler-2} that
\begin{align}
    F(a/b,a,b)=(-aq/b;q^2)_\infty(-bq;q^2)_\infty,
\end{align}
and hence \eqref{(2,2,2)-1} holds.

Setting $(u,v,w)=(1,1,-1)$, by \eqref{111-F-start} we have
\begin{align*}
F(1,1,-1)&={(-q;q^2)_\infty}\sum_{n=0}^\infty\frac{(-1)^nq^{n^2}(q;q^2)_n}{(-q;q^2)_n(q^2;q^2)_n}.
\end{align*}
Using \eqref{Slater29} with $q$ replaced by $-q$, we obtain \eqref{(2,2,2)-2}.

Setting $(u,v,w)=(q,q^2,q^2)$, by \eqref{111-F-start} we have
\begin{align*}
F(q,q^2,q^2)&={(-q^2;q^2)_\infty}\sum_{n=0}^\infty\frac{q^{n^2+2n}(-q;q^2)_n}{(-q^2;q^2)_n(q^2;q^2)_n}.
\end{align*}
Using \eqref{Wang-(1.22)} with $q$ replaced by $-q$, we obtain
 \eqref{(2,2,2)-3}.

 Setting $(u,v,w)=(q,1,1)$, by \eqref{111-F-start} we have
\begin{align*}
F(q,1,1)&={(-q^2;q^2)_\infty}\sum_{n=0}^\infty\frac{q^{n^2}(-q;q^2)_n}{(-q^2;q^2)_n(q^2;q^2)_n}.
\end{align*}
Using \eqref{Slater25} we obtain  \eqref{(2,2,2)-4}.

We have
\begin{align}
&F(-aq,a,b)=(aq^2;q^2)_\infty \sum_{n=0}^\infty \frac{b^nq^{n^2}(-aq/b;q^2)_n}{(q^2;q^2)_n(aq^2;q^2)_n} \nonumber \\
&=(aq^2;q^2)_\infty \lim\limits_{c\rightarrow 0} {}_2\phi_{1}\bigg(\genfrac{}{}{0pt}{} {-aq/b,-1/c}{aq^2};q^2,bcq  \bigg) \nonumber \\
&=(aq^2;q^2)_\infty \lim\limits_{c\rightarrow 0} \frac{(-acq^2,-bq;q^2)_\infty}{(aq^2,bcq;q^2)_\infty}=(-bq;q^2)_\infty.
\end{align}
Here for the third equality we used \eqref{eq-Gauss}. This proves \eqref{(2,2,2)-5}.

Setting $(u,v,w)=(q^{-1},1,q^{-1})$,  by \eqref{111-F-start} we have
\begin{align*}
&F(q^{-1},1,q^{-1})={(-1;q^2)_\infty}\sum_{n=0}^\infty\frac{q^{n^2-n}(-q^2;q^2)_n}{(-1;q^2)_n(q^2;q^2)_n}={(-q^2;q^2)_\infty}\sum_{n=0}^\infty\frac{q^{n^2-n}(1+q^{2n})}{(q^2;q^2)_n} \nonumber \\
&=(-q^2;q^2)_\infty((-1;q^2)_\infty+(-q^2;q^2)_\infty)=3(-q^2;q^2)_\infty^2.
\end{align*}
This proves \eqref{(2,2,2)-6}.

Setting $(u,v,w)=(-q,q^{-2},q^{-1})$,  by \eqref{111-F-start} we have
\begin{align*}
&F(-q,q^{-2},q^{-1})={(q^2;q^2)_\infty}\sum_{n=0}^\infty\frac{q^{n^2-n}(-1;q^2)_n}{(q^2;q^2)_n(q^2;q^2)_n} \nonumber \\
&={(-1;q^2)_\infty}\sum_{n=0}^\infty\frac{(-1)^nq^{n^2+n}(q^{-2};q^2)_n}{(-1;q^2)_n(q^2;q^2)_n}  \quad \text{(by \eqref{(Laughlin-(6.1.3))})}          \nonumber \\
&={(-1;q^2)_\infty}\Big(1+\frac{(1-q^{-2})(-q^2)}{2(1-q^2)}\Big)=3(-q^2;q^2)_\infty^2. \nonumber
\end{align*}
This proves \eqref{(2,2,2)-8}.

Setting $(u,v,w)=(-q^2,q^2,q)$,  by \eqref{111-F-start} we have
\begin{align*}
F(-q^2,q^2,q)&={(q^3;q^2)_\infty}\sum_{n=0}^\infty\frac{q^{n^2+n}(-q^2;q^2)_n}{(q^3;q^2)_n(q^2;q^2)_n}={(q;q^2)_\infty}\sum_{n=0}^\infty\frac{q^{n^2+n}(-q^2;q^2)_n}{(q;q)_{2n+1}}.       \nonumber
\end{align*}
Using \eqref{Slater28} we prove \eqref{(2,2,2)-10}.

Setting $(u,v,w)=(-q^2,q^{2},q^{2})$,  by \eqref{111-F-start} we have
\begin{align*}
F(-q^2,q^{2},q^{2})&={(q^3;q^2)_\infty}\sum_{n=0}^\infty\frac{q^{n^2+2n}(-q;q^2)_n}{(q^3;q^2)_n(q^2;q^2)_n}={(q;q^2)_\infty}\sum_{n=0}^\infty\frac{q^{n^2+2n}(-q;q^2)_n}{(q;q)_{2n+1}}.   \nonumber
\end{align*}
Using \eqref{Slater50} we prove \eqref{(2,2,2)-11}.
\end{proof}
\begin{rem}
The identity \eqref{(2,2,2)-1} appeared earlier in the work of Lovejoy \cite{Lovejoy}, and then was rediscovered by Cao and Wang \cite[Theorem 4.1]{Cao-Wang}. Our proof is different from the proofs given there.
\end{rem}

\subsection{Identities of index $(1,1,3)$} We find the following new companion to \cite[Theorem 4.4]{Cao-Wang}.
\begin{theorem}\label{thm-(1,1,3)}
We have
\begin{align}
\sum_{i,j,k\geq 0}\frac{(-1)^kq^{i^2+j^2+9k^2/2+ij+3ik+3jk-i-3k/2}}{(q;q)_i(q;q)_j(q^3;q^3)_k}&=\frac{2}{(q,q^{2};q^3)_\infty}. \label{(1,1,3)-6}
\end{align}
\end{theorem}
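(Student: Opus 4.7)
The plan is to use the constant term method, which has been the paper's workhorse for index $(1,1,k)$ identities. A direct calculation shows that the exponent rewrites as
\begin{equation*}
i^2+j^2+\tfrac{9}{2}k^2+ij+3ik+3jk-i-\tfrac{3}{2}k \;=\; \binom{i+j+3k}{2}+\binom{i}{2}+\binom{j+1}{2}.
\end{equation*}
Indeed, multiplying out $\binom{i+j+3k}{2}+\binom{i}{2}+\binom{j+1}{2}$ and collecting terms yields $i^2+j^2+\tfrac{9k^2}{2}+ij+3ik+3jk-i-\tfrac{3k}{2}$ as required. This form is convenient because the only coupling among $i,j,k$ sits in the single term $\binom{i+j+3k}{2}$.

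Next, introduce a constant term variable $z$ using the Jacobi triple product \eqref{Jacobi}: $q^{\binom{n}{2}}=\mathrm{CT}_z\bigl[z^{-n}(q,-z,-q/z;q)_\infty\bigr]$ with $n=i+j+3k$. The three sums then separate and are summed individually by Euler's identities \eqref{Euler-1} and \eqref{Euler-2}:
\[
\sum_i \frac{q^{\binom{i}{2}}z^{-i}}{(q;q)_i}=(-z^{-1};q)_\infty,\quad \sum_j \frac{q^{\binom{j+1}{2}}z^{-j}}{(q;q)_j}=(-q/z;q)_\infty,\quad \sum_k \frac{(-z^{-3})^k}{(q^3;q^3)_k}=\frac{1}{(-z^{-3};q^3)_\infty}.
\]
This yields
\begin{equation*}
\mathrm{LHS}=\mathrm{CT}_z\left[\frac{(q,-z,-q/z;q)_\infty\,(-z^{-1};q)_\infty\,(-q/z;q)_\infty}{(-z^{-3};q^3)_\infty}\right].
\end{equation*}

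To simplify, use the cube-root factorization $1+w^3=(1+w)(1+\omega w)(1+\omega^2 w)$ with $\omega=e^{2\pi i/3}$, which gives $(-z^{-3};q^3)_\infty=(-z^{-1};q)_\infty(-\omega z^{-1};q)_\infty(-\omega^2 z^{-1};q)_\infty$. The factor $(-z^{-1};q)_\infty$ cancels, reducing the integrand to
\begin{equation*}
\mathrm{LHS}=\mathrm{CT}_z\left[\frac{(q,-z,-q/z;q)_\infty\,(-q/z;q)_\infty}{(-\omega z^{-1};q)_\infty(-\omega^2 z^{-1};q)_\infty}\right].
\end{equation*}

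The main obstacle is the evaluation of this last constant term. My approach would be to expand the two denominator factors by Euler's \eqref{Euler-1} and multiply by the theta expansion $\theta(z)=\sum_m q^{\binom{m}{2}}z^m$, then carry out the CT by sorting the resulting triple sum according to the residues of the summation indices modulo $3$, using $1+\omega+\omega^2=0$ and $\omega^3=1$. Equivalently, one can recast the CT as a single weighted sum $\sum_{n\ge 0}q^{\binom{n}{2}}h_n$, where $h(z)=(-z;q)_\infty(-qz;q)_\infty/(-z^3;q^3)_\infty$; this reformulation is easy to derive because $h(z)$ is a power series in $z$, so shifting the bilateral theta index costs nothing. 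The factor $2$ on the RHS should surface from the identity $(-1;q^3)_\infty=2(-q^3;q^3)_\infty$, precisely as the factor $3$ does in the proof of \eqref{(2,2,2)-6} via $(-1;q^2)_\infty=2(-q^2;q^2)_\infty$ combined with a symmetric self-pairing. Verifying that the resulting single sum indeed matches $2(q^3;q^3)_\infty/(q;q)_\infty=2/(q,q^2;q^3)_\infty$ is the technical crux; a partial numerical check confirms the first several coefficients $2,2,4,4,8,\ldots$ on both sides, suggesting that this final reduction does succeed.
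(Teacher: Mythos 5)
Your setup is sound, and it is in fact the paper's own constant-term representation in disguise: the exponent decomposition $\binom{i+j+3k}{2}+\binom{i}{2}+\binom{j+1}{2}$ is correct, the three single sums are evaluated correctly by \eqref{Euler-1} and \eqref{Euler-2}, and after the cube-root factorization and the cancellation of $(-z^{-1};q)_\infty$ your integrand becomes exactly the integrand in the paper's proof under the substitution $z\mapsto -1/z$, which leaves the constant term unchanged. Up to that point nothing is wrong.

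The problem is that the evaluation of this constant term --- the only substantive step of the proof --- is not carried out. Your first suggestion (expand both denominator factors by \eqref{Euler-1}, multiply by the theta series, and ``sort the resulting triple sum by residues modulo $3$'') produces a genuine triple sum, and you give no indication of how the sorting closes it; as stated it is a hope, not an argument. Your ``equivalent'' reformulation $\sum_{n\ge 0}q^{\binom{n}{2}}h_n$ with $h(w)=(-qw;q)_\infty/\bigl((-\zeta_3 w;q)_\infty(-\zeta_3^2 w;q)_\infty\bigr)$ is the right object, but you stop before computing anything with it. The missing idea is to combine the numerator with one denominator factor by the $q$-binomial theorem \eqref{eq-qbinomial}, namely $(-qw;q)_\infty/(-\zeta_3 w;q)_\infty=\sum_{i\ge 0}(q\zeta_3^2;q)_i(-\zeta_3 w)^i/(q;q)_i$, expand the remaining factor by \eqref{Euler-1}, extract the constant term to get a double sum, sum the inner sum by \eqref{Euler-2} to produce $(\zeta_3^2q^{i};q)_\infty=(1-\zeta_3^2q^{i})(\zeta_3^2q;q)_\infty$, and then split the residual single sum into two Euler sums; the factor $2$ then emerges as $1-\zeta_3-\zeta_3^2=2$. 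In particular, your proposed mechanism for the $2$, namely $(-1;q^3)_\infty=2(-q^3;q^3)_\infty$ in analogy with the factor $3$ in \eqref{(2,2,2)-6}, is not what happens here, which is further evidence that the final reduction has not actually been thought through. A numerical check of the first few coefficients does not substitute for this computation, so as written the proposal does not prove \eqref{(1,1,3)-6}.
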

\begin{proof}
We define
\begin{align}
F(u,v,w)=F(u,v,w;q):=\sum_{i,j,k\geq 0} \frac{u^iv^jw^k(-1)^kq^{i^2+j^2+9k^2/2+ij+3ik+3jk-i-j-3k/2}}{(q;q)_i(q;q)_j(q^3;q^3)_k}.
\end{align}
By \eqref{Euler-1}, \eqref{Euler-2} and \eqref{Jacobi} we have
\begin{align}
&F(u,v,w)=\CT \sum_{i\geq 0}\frac{(-1)^iu^i z^i q^{(i^2-i)/2}}{(q;q)_i} \sum_{j\geq 0}\frac{(-1)^jv^j z^j q^{(j^2-j)/2}}{(q;q)_j} \sum_{k\geq 0}\frac{w^k z^{3k}}{(q^3;q^3)_k} \nonumber \\
&\quad \quad \quad \quad \quad \quad \sum_{n=-\infty}^\infty (-1)^nq^{(n^2-n)/2}z^{-n}   \nonumber \\
&=\CT \frac{(uz,vz,z^{-1},zq,q;q)_\infty}{(wz^3;q^3)_\infty}.
\end{align}
Setting $(u,v,w)=(1,q,1)$, we have
\begin{align}
&F(1,q,1)=\CT \frac{(zq,zq,z^{-1},q;q)_\infty}{(\zeta_3z,\zeta^2_3 z;q)_\infty}  \nonumber \\
&= \CT \left[\frac{(zq;q)_\infty}{(\zeta_3z;q)_\infty}\cdot\frac{1}{(\zeta^2_3 z;q)_\infty}\cdot(zq,z^{-1},q;q)_\infty\right] \nonumber \\
&=\CT \left[ \sum_{i\geq 0}\frac{ (\zeta^2_3q;q)_i(\zeta_3z)^i}{(q;q)_i} \sum_{j\geq 0}\frac{(\zeta^2_3z)^j }{(q;q)_j}  \sum_{n=-\infty}^\infty (-1)^nq^{(n^2-n)/2}z^{-n}\right] \nonumber \\
&=\sum_{i,j\geq 0}\frac{ (\zeta^2_3q;q)_i\zeta^i_3\zeta^{2j}_3(-1)^{i+j}q^{(i^2+2ij+j^2-i-j)/2}}{(q;q)_i(q;q)_j}  \nonumber \\
&=\sum_{i\geq 0}\frac{ (-1)^i\zeta^i_3 q^{(i^2-i)/2}(\zeta^2_3q;q)_i}{(q;q)_i} \sum_{j\geq 0}\frac{(-1)^j\zeta^{2j}_3 q^{(j^2+2ij-j)/2}}{(q;q)_j}  \nonumber \\
&=\sum_{i\geq 0}\frac{ (-1)^i\zeta^i_3 q^{(i^2-i)/2}(\zeta^2_3q;q)_i}{(q;q)_i}\cdot(\zeta^2_3q^{i};q)_\infty \nonumber \\
&=(\zeta^2_3q;q)_\infty\sum_{i\geq 0}\frac{ (-1)^i\zeta^i_3 q^{(i^2-i)/2}(1-\zeta^2_3q^{i})}{(q;q)_i} \nonumber \\
&=(\zeta^2_3q;q)_\infty \Big(\sum_{i\geq 0}\frac{ (-1)^i\zeta^i_3 q^{(i^2-i)/2}}{(q;q)_i}-\sum_{i\geq 0}\frac{ (-1)^i\zeta^i_3 \zeta^2_3q^{i}q^{(i^2-i)/2}}{(q;q)_i}\Big) \nonumber \\
&=(\zeta^2_3q;q)_\infty \big((\zeta_3;q)_\infty-\zeta^2_3(\zeta_3q;q)_\infty \big) \nonumber \\
&=(\zeta^2_3q;q)_\infty\cdot(1-\zeta_3-\zeta^2_3)\cdot(\zeta_3q;q)_\infty \nonumber \\
&=\frac{2(q,\zeta_3q,\zeta^2_3q;q)_\infty}{(q;q)_\infty}=\frac{2(q^3;q^3)_\infty}{(q;q)_\infty}=\frac{2}{(q,q^2;q^3)_\infty}.  \nonumber
\end{align}
 This proves \eqref{(1,1,3)-6}.
\end{proof}

\subsection{Identities of index $(1,2,2)$}
We find some new companions to two identities in \cite{Sills2003} (see \eqref{(1,2,2)-1} and \eqref{(1,2,2)-2} below).
\begin{theorem}\label{thm-6}
We have
\begin{align}
&\sum_{i,j,k\geq 0}\frac{q^{i^2/2+3j^2+2k^2+2ij+2ik+4jk+3i/2+3j+2k}}{(q;q)_i(q^2;q^2)_j(q^2;q^2)_k}=\frac{(q,q^7,q^8;q^8)_\infty}{(q;q)_\infty}, \quad \text{(\cite[Eq.\ (5.35)]{Sills2003})} \label{(1,2,2)-1} \\
&\sum_{i,j,k\geq 0}\frac{q^{i^2/2+3j^2+2k^2+2ij+2ik+4jk+i/2+j}}{(q;q)_i(q^2;q^2)_j(q^2;q^2)_k}=\frac{(q^3,q^5,q^8;q^8)_\infty}{(q;q)_\infty}, \quad \text{(\cite[Eq.\ (5.37)]{Sills2003})} \label{(1,2,2)-2} \\
&\sum_{i,j,k\geq 0}\frac{(-1)^{i+j}q^{i^2/2+3j^2+2k^2+2ij+2ik+4jk+3i/2+2j+k}}{(q;q)_i(q^2;q^2)_j(q^2;q^2)_k}={(q^2;q^2)_\infty},\label{(1,2,2)-4}\\
&\sum_{i,j,k\geq 0}\frac{(-1)^jq^{i^2/2+3j^2+2k^2+2ij+2ik+4jk+3i/2+2j+2k}}{(q;q)_i(q^2;q^2)_j(q^2;q^2)_k}=\frac{(q,q^5,q^6;q^6)_\infty}{(q;q)_\infty},\label{(1,2,2)-5} \\
&\sum_{i,j,k\geq 0}\frac{(-1)^jq^{i^2+6j^2+4k^2+4ij+4ik+8jk+i-4j-4k}}{(q^2;q^2)_i(q^4;q^4)_j(q^4;q^4)_k}= \frac{2}{(q^4,q^6,q^8;q^{12})_\infty}, \label{(1,2,2)-new-6} \\
&\sum_{i,j,k\geq 0}\frac{(-1)^jq^{i^2+6j^2+4k^2+4ij+4ik+8jk+i}}{(q^2;q^2)_i(q^4;q^4)_j(q^4;q^4)_k}= \frac{(q^6,q^6,q^{12};q^{12})_\infty}{(q^2;q^{2})_\infty}, \label{(1,2,2)-new-8} \\
&\sum_{i,j,k\geq 0}\frac{q^{i^2/2+3j^2+2k^2+2ij+2ik+4jk+i/2}a^{j+k}}{(q;q)_i(q^2;q^2)_j(q^2;q^2)_k}=(-q;q)_\infty (-aq^2;q^4)_\infty, \label{(1,2,2)-new-1} \\
&\sum_{i,j,k\geq 0}\frac{q^{i^2/2+3j^2+2k^2+2ij+2ik+4jk+3i/2+2j}a^{j+k}}{(q;q)_i(q^2;q^2)_j(q^2;q^2)_k}= (-q^2;q)_\infty (-aq^2;q^4)_\infty, \label{(1,2,2)-new-2} \\
&\sum_{i,j,k\geq 0}\frac{q^{i^2+6j^2+4k^2+4ij+4ik+8jk-i-2j}}{(q^2;q^2)_i(q^4;q^4)_j(q^4;q^4)_k}= \frac{2(q^6,q^{10},q^{16};q^{16})_\infty}{(q^2;q^2)_\infty}, \label{(1,2,2)-new-3} \\
&\sum_{i,j,k\geq 0}\frac{(-1)^jq^{i^2+6j^2+4k^2+4ij+4ik+8jk+2k}a^ib^{j+k}}{(q^2;q^2)_i(q^4;q^4)_j(q^4;q^4)_k}=(-aq;q^2)_\infty, \label{(1,2,2)-new-5} \\
 &\sum_{i,j,k\geq 0}\frac{(-1)^{i+j}q^{i^2+6j^2+4k^2+4ij+4ik+8jk-2i-6j-5k}a^{i+2j+k}}{(q^2;q^2)_i(q^4;q^4)_j(q^4;q^4)_k}= {(aq;q^4)_\infty}, \label{(1,2,2)-new-12} \\
&\sum_{i,j,k\geq 0}\frac{(-1)^{i+j}q^{i^2+6j^2+4k^2+4ij+4ik+8jk-i-2j-4k}}{(q^2;q^2)_i(q^4;q^4)_j(q^4;q^4)_k}= (q^6,q^{10},q^{16};q^{16})_\infty(q^4,q^{28};q^{32})_\infty, \label{(1,2,2)-new-15} \\
&\sum_{i,j,k\geq 0}\frac{(-1)^{i+j}q^{i^2+6j^2+4k^2+4ij+4ik+8jk-2j-k}a^{i+2j+k}}{(q^2;q^2)_i(q^4;q^4)_j(q^4;q^4)_k}= {(aq;q^4)_\infty}, \label{(1,2,2)-new-18}\\
&\sum_{i,j,k\geq 0}\frac{(-1)^{i+j}q^{i^2+6j^2+4k^2+4ij+4ik+8jk-i-4j-2k}a^{i+2j+2k}}{(q^2;q^2)_i(q^4;q^4)_j(q^4;q^4)_k}= {(a;q^2)_\infty}, \label{(1,2,2)-new-6.30}\\
&\sum_{i,j,k\geq 0}\frac{(-1)^{i+j}q^{i^2+6j^2+4k^2+4ij+4ik+8jk-i-6j}a^i}{(q^2;q^2)_i(q^4;q^4)_j(q^4;q^4)_k}= {(a;q^2)_\infty-(aq^4;q^2)_\infty}, \label{(1,2,2)-new-11} \\
 &\sum_{i,j,k\geq 0}\frac{(-1)^{i+j}q^{i^2+6j^2+4k^2+4ij+4ik+8jk+i-4j-4k}}{(q^2;q^2)_i(q^4;q^4)_j(q^4;q^4)_k}= {2(q^2;q^4)_\infty}. \label{(1,2,2)-new-19}
\end{align}
\end{theorem}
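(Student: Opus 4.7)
The key observation is that the quadratic form factors as $i^2+6j^2+4k^2+4ij+4ik+8jk=(i+2j+2k)^2+2j^2$, which suggests the constant term method: I would introduce a dummy variable $z$ and write $q^{(i+2j+2k)^2}=\mathrm{CT}[z^{i+2j+2k}\sum_{n\in\mathbb{Z}}q^{n^2}z^{-n}]$, so that the triple sum decouples across the three indices $i,j,k$ and can be summed separately.

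Concretely, I plan to show
\begin{align*}
\mathrm{LHS}=\mathrm{CT}\Bigl[A(z)B(z)C(z)D(z)\Bigr],
\end{align*}
where $A(z)=\sum_i\frac{(-qz)^i}{(q^2;q^2)_i}=\frac{1}{(-qz;q^2)_\infty}$ via \eqref{Euler-1} with $q\to q^2$; $B(z)=\sum_j\frac{(-1)^j q^{2j^2-4j}z^{2j}}{(q^4;q^4)_j}=(q^{-2}z^2;q^4)_\infty$ via \eqref{Euler-2} with $q\to q^4$ (after rewriting $2j^2-4j=4\binom{j}{2}-2j$); $C(z)=\sum_k\frac{q^{-4k}z^{2k}}{(q^4;q^4)_k}=\frac{1}{(q^{-4}z^2;q^4)_\infty}$ via \eqref{Euler-1}; and $D(z)=\sum_n q^{n^2}z^{-n}=(q^2,-qz,-qz^{-1};q^2)_\infty$ via \eqref{Jacobi}. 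The factor $(-qz;q^2)_\infty$ appearing in $D(z)$ cancels against $A(z)$, leaving
\begin{align*}
\mathrm{LHS}=(q^2;q^2)_\infty\cdot\mathrm{CT}\Bigl[\frac{(q^{-2}z^2;q^4)_\infty}{(q^{-4}z^2;q^4)_\infty}(-qz^{-1};q^2)_\infty\Bigr].
\end{align*}

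Next, I would expand the ratio of products by the $q$-binomial theorem \eqref{eq-qbinomial} (with base $q^4$, parameter $a=q^2$, variable $q^{-4}z^2$) and expand $(-qz^{-1};q^2)_\infty$ by \eqref{Euler-2}, so that the constant term picks out the diagonal $m=2n$. Using $(q^2;q^2)_{2n}=(q^2;q^4)_n(q^4;q^4)_n$ to telescope the $(q^2;q^4)_n$ factors, the right-hand side collapses to
\begin{align*}
\mathrm{LHS}=(q^2;q^2)_\infty\sum_{n\geq 0}\frac{q^{4n^2-4n}}{(q^4;q^4)_n^{\,2}}.
\end{align*}

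Finally, I would invoke \eqref{5yue14ri} with $q$ replaced by $q^4$ to obtain $\sum_{n\geq 0}\frac{q^{4(n^2-n)}}{(q^4;q^4)_n^{\,2}}=\frac{2}{(q^4;q^4)_\infty}$, which yields $\mathrm{LHS}=\frac{2(q^2;q^2)_\infty}{(q^4;q^4)_\infty}=2(q^2;q^4)_\infty$, as required. The main obstacle is choosing the right constant-term decomposition so that both (i) the Jacobi factor $(-qz;q^2)_\infty$ cancels against the $i$-sum and (ii) the residual ratio of base-$q^4$ products admits a clean $q$-binomial expansion whose diagonal against $(-qz^{-1};q^2)_\infty$ lands on the known identity \eqref{5yue14ri}; the appearance of the factor $2$ on the right side is exactly what \eqref{5yue14ri} provides.
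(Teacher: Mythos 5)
Your computation is correct as far as it goes, and for the identity it treats it is essentially the paper's own argument: the paper proves \eqref{(1,2,2)-new-19} by writing the sum as $E(q^2,1,q^{-1})$ with $E(u,v,w)=\CT\bigl[(vz,-vz,z^{-1},zq^2,q^2;q^2)_\infty/(uz,wz,-wz;q^2)_\infty\bigr]$, cancelling the theta factor against the $i$-sum, expanding the residual ratio $(z^2;q^4)_\infty/(q^2z^2;q^4)_\infty$ by the $q$-binomial theorem, extracting the diagonal against $(z^{-1};q^2)_\infty$, and landing on $\sum_n q^{4n^2-4n}/(q^4;q^4)_n^2=2/(q^4;q^4)_\infty$, i.e. \eqref{5yue14ri}. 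Your version differs only by a harmless shift $z\mapsto qz$ in the dummy variable. I verified the exponent bookkeeping ($2j^2-4j=4\binom{j}{2}-2j$, the diagonal $m=2n$, and the telescoping via $(q^2;q^2)_{2n}=(q^2;q^4)_n(q^4;q^4)_n$); it all checks out.

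The genuine gap is coverage: the theorem asserts seventeen identities and you have proved exactly one, namely \eqref{(1,2,2)-new-19}. Nothing in the proposal addresses \eqref{(1,2,2)-1}--\eqref{(1,2,2)-new-6.30} or \eqref{(1,2,2)-new-11}. These do not all follow from the single template you set up. In the paper, the identities with exponent $i^2/2+3j^2+2k^2+\cdots$ (e.g. \eqref{(1,2,2)-1}, \eqref{(1,2,2)-2}, \eqref{(1,2,2)-4}, \eqref{(1,2,2)-5}, \eqref{(1,2,2)-new-1}, \eqref{(1,2,2)-new-2}, and also \eqref{(1,2,2)-new-6}, \eqref{(1,2,2)-new-8} after $q\to q^2$) are handled by a different reduction: one sums over $i$ first with \eqref{Euler-2}, absorbs the resulting infinite product, and uses the Gaussian-binomial identity \eqref{finite-Jacobi} to collapse the $(j,k)$-sum to a single sum of the form $\sum_n w^nq^{2n^2}(-vq/w;q^2)_n/\bigl((-uq^{-1/2};q^2)_{n+1}(-uq^{1/2};q^2)_n(q^2;q^2)_n\bigr)$, which is then matched against Slater's (S.\ 5), (S.\ 27), (S.\ 38), (S.\ 39) and \eqref{eq-BMS}. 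The remaining identities \eqref{(1,2,2)-new-3}, \eqref{(1,2,2)-new-5}, \eqref{(1,2,2)-new-12}, \eqref{(1,2,2)-new-15}, \eqref{(1,2,2)-new-18}, \eqref{(1,2,2)-new-6.30}, \eqref{(1,2,2)-new-11} do use your constant-term template, but each requires a different specialization and terminates at a different known result --- (S.\ 39), a direct theta evaluation, Theorem \ref{thm-4-new}, (S.\ 38), \eqref{Cao-Wang-3.8-1}, \eqref{Cao-Wang-3.8-2}, and an elementary two-term Euler computation, respectively. So while your method is the right one for the last identity (and several of its siblings), a complete proof of the theorem needs both decompositions and a case-by-case matching with roughly a dozen distinct auxiliary identities, none of which is indicated in the proposal.
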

\begin{proof}
We define
\begin{align}
F(u,v,w)=F(u,v,w;q):=\sum_{i,j,k\geq 0} \frac{u^iv^jw^kq^{i^2/2+3j^2+2k^2+2ij+2ik+4jk}}{(q;q)_i(q^2;q^2)_j(q^2;q^2)_k}.
\end{align}
We have
\begin{align*}
&F(u,v,w)=\sum_{j,k\geq 0}\frac{v^jw^k q^{3j^2+2k^2+4jk}}{(q^2;q^2)_j(q^2;q^2)_k}\sum_{i\geq 0}\frac{u^i q^{i^2/2+2ij+2ik}}{(q;q)_i}  \nonumber \\
&=\sum_{j,k\geq 0}\frac{v^jw^k q^{3j^2+2k^2+4jk}}{(q^2;q^2)_j(q^2;q^2)_k}{(-uq^{2j+2k+1/2};q)_\infty}\nonumber \\
&={(-uq^{-1/2};q)_\infty}\sum_{j,k\geq 0}\frac{v^jw^k q^{3j^2+2k^2+4jk}}{(q^2;q^2)_j(q^2;q^2)_k(-uq^{-1/2};q)_{2j+2k+1}}\nonumber \\
&={(-uq^{-1/2};q)_\infty}\sum_{j,k\geq 0}\frac{v^jw^k q^{3j^2+2k^2+4jk}}{(q^2;q^2)_j(q^2;q^2)_k(-uq^{-1/2};q^2)_{j+k+1}(-uq^{1/2};q^2)_{j+k}}\nonumber \\
&={(-uq^{-1/2};q)_\infty}\sum_{n=0}^\infty\sum_{j=0}^n \frac{w^n{(v/w)}^j q^{2n^2+j^2}}{(q^2;q^2)_j(q^2;q^2)_{n-j}(-uq^{-1/2};q^2)_{n+1}(-uq^{1/2};q^2)_n}\nonumber \\
&={(-uq^{-1/2};q)_\infty}\sum_{n=0}^\infty\frac{w^nq^{2n^2}}{(-uq^{-1/2};q^2)_{n+1}(-uq^{1/2};q^2)_n}\sum_{j=0}^n\frac{{(v/w)}^j q^{j^2}}{(q^2;q^2)_j(q^2;q^2)_{n-j}}\nonumber \\
&={(-uq^{-1/2};q)_\infty}\sum_{n=0}^\infty\frac{w^nq^{2n^2}}{(-uq^{-1/2};q^2)_{n+1}(-uq^{1/2};q^2)_n(q^2;q^2)_n}\sum_{j=0}^n (v/w)^j q^{j^2}{n \brack j}_{q^2}\nonumber \\
&={(-uq^{-1/2};q)_\infty}\sum_{n=0}^\infty\frac{w^nq^{2n^2}}{(-uq^{-1/2};q^2)_{n+1}(-uq^{1/2};q^2)_n(q^2;q^2)_n}(-vq/w;q^2)_n.
\end{align*}

Setting $(u,v,w)=(q^{3/2},q^3,q^2)$ and using \eqref{Slater38} we obtain \eqref{(1,2,2)-1}.

Setting $(u,v,w)=(q^{1/2},q,1)$ and using \eqref{Slater39} we obtain \eqref{(1,2,2)-2}.

Setting $(u,v,w)=(-q^{3/2},-q^2,q)$ and using \eqref{Slater5} we obtain \eqref{(1,2,2)-4}.

Setting $(u,v,w)=(q^{3/2},-q^2,q^2)$ and using \eqref{Slater27} we obtain \eqref{(1,2,2)-5}.	

Setting $(u,v,w)=(q^{1/2},-q^{-2},q^{-2})$ and using \eqref{Slater27} we obtain \eqref{(1,2,2)-new-6}.

Setting $(u,v,w)=(q^{1/2},-1,1)$ and using \eqref{eq-BMS} we obtain \eqref{(1,2,2)-new-8}.

We have by \eqref{Euler-2} that
\begin{align}
 F(q^{1/2},a,a)&=(-q;q)_\infty \sum_{n=0}^\infty \frac{a^nq^{2n^2}}{(q^4;q^4)_n}=(-q;q)_\infty (-aq^2;q^4)_\infty, \\
    F(q^{3/2},aq^2,a)&=(-q;q)_\infty \sum_{n=0}^\infty \frac{a^nq^{2n^2}}{(1+q)(q^4;q^4)_n}=(-q^2;q)_\infty(-aq^2;q^4)_\infty.
\end{align}
This proves \eqref{(1,2,2)-new-1} and \eqref{(1,2,2)-new-2}.

We define
\begin{align}
E(u,v,w)=E(u,v,w;q):=\sum_{i,j,k\geq 0} \frac{(-1)^{i+j}u^iv^{2j}w^{2k}q^{i^2+6j^2+4k^2+4ij+4ik+8jk-i-4j-2k}}{(q^2;q^2)_i(q^4;q^4)_j(q^4;q^4)_k}.\end{align}
By \eqref{Euler-1}, \eqref{Euler-2} and \eqref{Jacobi} we have
\begin{align}
&E(u,v,w) \nonumber \\
&=\CT\left[ \sum_{i\geq 0}\frac{u^i z^i }{(q^2;q^2)_i} \sum_{j\geq 0}\frac{(-1)^jv^{2j} z^{2j}q^{2j^2-2j} }{(q^4;q^4)_j} \sum_{k\geq 0}\frac{  w^{2k}  z^{2k} }{(q^4;q^4)_k} \sum_{n=-\infty}^\infty (-1)^nq^{n^2-n}z^{-n} \right]  \nonumber \\
&=\CT \frac{(vz,-vz,z^{-1},zq^2,q^2;q^2)_\infty}{(uz,wz,-wz;q^2)_\infty}.
\end{align}

Setting $(u,v,w)=(-1,\zeta_4 q,q)$, we have
\begin{align}
&E(-1,\zeta_4 q,q)=\CT \frac{(\zeta_4 qz,-\zeta_4 qz,zq^2,z^{-1},q^2;q^2)_\infty}{(- z,qz,-qz;q^2)_\infty} \nonumber \\
&=\CT \frac{1}{(-z;q^2)_\infty} \cdot \frac{(-q^2z^2;q^4)_\infty}{(q^2z^2;q^4)_\infty} \cdot (z^{-1},q^2z,q^2;q^2)_\infty \label{05yue.10ri-1} \\
&=\CT \sum_{i\geq 0} \frac{(-z)^i}{(q^2;q^2)_i} \sum_{j\geq 0} \frac{(-1;q^4)_jq^{2j}z^{2j}}{(q^4;q^4)_j} \sum_{k=-\infty}^\infty (-1)^kq^{k^2-k}z^{-k} \nonumber \\
&=\sum_{i,j\geq 0} \frac{(-1;q^4)_jq^{(i+2j)^2-i}}{(q^2;q^2)_i(q^4;q^4)_j} =\sum_{j\geq 0} \frac{(-1;q^4)_jq^{4j^2}}{(q^4;q^4)_j} \sum_{i\geq 0} \frac{q^{i^2+(4j-1)i}}{(q^2;q^2)_i} \nonumber \\
&=2(-q^2;q^2)_\infty \sum_{j\geq 0} \frac{q^{4j^2}}{(-q^2;q^4)_j(q^4;q^4)_j}.
\end{align}
Substituting \eqref{Slater39} with $q$ replaced by $-q^2$ into it, we prove \eqref{(1,2,2)-new-3}.

Setting $(u,v,w)=(-aq,b^{1/2} q^2,b^{1/2} q^2)$, we have
\begin{align}
&E(-aq,b^{1/2} q^2,b^{1/2} q^2)=\CT \frac{(zq^2,z^{-1},q^2;q^2)_\infty}{(-aq z;q^2)_\infty}   \nonumber \\
&=\mathrm{CT}\left[\sum_{i\geq 0}\frac{(-aqz)^i}{(q^2;q^2)_i}\sum_{n=-\infty}^\infty (-1)^nq^{n^2-n}z^{-n}  \right] =\sum_{n\geq 0}\frac{a^nq^{n^2}}{(q^2;q^2)_n}=(-aq;q^2)_\infty.
\end{align}
This proves \eqref{(1,2,2)-new-5}.

Setting $(u,v,w)=(aq^{-1},aq^{-1},a^{1/2}q^{-3/2})$, we have
\begin{align}
&E(aq^{-1},aq^{-1},a^{1/2}q^{-3/2})=\CT \left[ \frac{(-aq^{-1}z,q^2z,z^{-1},q^2;q^2)_\infty}{(aq^{-3} z^2;q^4)_\infty} \right]   \nonumber \\
&=\mathrm{CT}\left[\sum_{i\geq 0}\frac{(aq^{-1}z)^iq^{i^2-i}}{(q^2;q^2)_i}\sum_{j\geq 0}\frac{(aq^{-3}z^2)^j}{(q^4;q^4)_j}\sum_{n=-\infty}^\infty (-1)^nq^{n^2-n}z^{-n}  \right] \nonumber \\
&=\sum_{i,j\geq 0}\frac{(-1)^iq^{2i^2+4j^2+4ij-3i-5j}a^{i+j}}{(q^2;q^2)_i(q^4;q^4)_j}.
\end{align}
Using \eqref{thm-4-new} we prove \eqref{(1,2,2)-new-12}.

Setting $(u,v,w)=(1,q,q^{-1})$, we have
\begin{align}
&E(1,q,q^{-1})=\CT \frac{(qz,-qz,q^2z,z^{-1},q^2;q^2)_\infty}{(z,q^{-1} z,-q^{-1} z;q^2)_\infty}=\CT \frac{(z^{-1},q^2;q^2)_\infty}{(1-z)(1-q^{-2}z^2)}
\label{1-(1,2,2)-new-15}  \\
&=\CT \left[-z^{-1}\frac{(q^2z^{-1},q^2;q^2)_\infty}{1-q^{-2}z^2}\right] =-(q^2;q^2)_\infty \CT \sum_{i=0}^\infty q^{-2i}z^{2i-1} \sum_{k=0}^\infty \frac{(-1)^kq^{k^2+k}z^{-k}}{(q^2;q^2)_k} \nonumber \\
&=(q^2;q^2)_\infty \sum_{i=0}^\infty \frac{q^{4i^2+4i}}{(q^2;q^2)_{2i+1}}.
\end{align}
Substituting \eqref{Slater38} into it, we prove \eqref{(1,2,2)-new-15}.

Setting $(u,v,w)=(aq,aq,a^{1/2}q^{1/2})$, we have
\begin{align}
&E(aq,aq,a^{1/2}q^{1/2})=\CT \frac{(-aqz,q^2z,z^{-1},q^2;q^2)_\infty}{(aq z^2;q^4)_\infty}   \nonumber \\
&=\mathrm{CT}\left[\sum_{i\geq 0}\frac{(aqz)^iq^{i^2-i}}{(q^2;q^2)_i}\sum_{j\geq 0}\frac{(aqz^2)^j}{(q^4;q^4)_j}\sum_{n=-\infty}^\infty (-1)^nq^{n^2-n}z^{-n}  \right] \nonumber \\
&=\sum_{i,j\geq 0}\frac{(-1)^iq^{2i^2+4j^2+4ij-i-j}a^{i+j}}{(q^2;q^2)_i(q^4;q^4)_j}.
\end{align}
Using \eqref{Cao-Wang-3.8-1} we prove \eqref{(1,2,2)-new-18}.

Setting $(u,v,w)=(a,a,a)$, we have
\begin{align}
&E(a,a,a)=\CT \frac{(-az,q^2z,z^{-1},q^2;q^2)_\infty}{(a^2z^2;q^4)_\infty}   \nonumber \\
&=\mathrm{CT}\left[\sum_{i\geq 0}\frac{(az)^iq^{i^2-i}}{(q^2;q^2)_i}\sum_{j\geq 0}\frac{(a^2z^2)^j}{(q^4;q^4)_j}\sum_{n=-\infty}^\infty (-1)^nq^{n^2-n}z^{-n}  \right] \nonumber \\
&=\sum_{i,j\geq 0}\frac{(-1)^iq^{2i^2+4j^2+4ij-2i-2j}a^{i+2j}}{(q^2;q^2)_i(q^4;q^4)_j}.
\end{align}
Using \eqref{Cao-Wang-3.8-2} we prove \eqref{(1,2,2)-new-6.30}.

Setting $(u,v,w)=(a,q^{-1},q)$, we have
\begin{align}
&E(a,q^{-1},q)=\CT \frac{(q^{-1}z,-q^{-1}z,q^2z,z^{-1},q^2;q^2)_\infty}{( az,qz,-qz;q^2)_\infty}   \nonumber \\
&=\CT \frac{(1-q^{-2}z^2)(q^2z,z^{-1},q^2;q^2)_\infty}{( az;q^2)_\infty}   \nonumber \\
&=\mathrm{CT}\left[(1-q^{-2}z^2)\sum_{i\geq 0}\frac{a^iz^i}{(q^2;q^2)_i}\sum_{n=-\infty}^\infty (-1)^nq^{n^2-n}z^{-n}  \right] \nonumber \\
&=\sum_{n\geq 0}\frac{(-1)^na^nq^{n^2-n}}{(q^2;q^2)_n}-\sum_{n\geq 0}\frac{(-1)^na^nq^{n^2+3n}}{(q^2;q^2)_n}  \nonumber \\
 &=(a;q^2)_\infty-(aq^4;q^2)_\infty.
\end{align}
This proves \eqref{(1,2,2)-new-11}.

Setting $(u,v,w)=(q^2,1,q^{-1})$, we have
\begin{align}
&E(q^2,1,q^{-1})=\mathrm{CT}\left[   \frac{(z,-z,z^{-1},q^2;q^2)_\infty}{(q^{-1} z,-q^{-1} z;q^2)_\infty}   \right]=(q^2;q^2)_\infty\mathrm{CT}\left[ \frac{(z^2;q^4)_\infty}{(q^2z^2;q^4)_\infty} (z^{-1};q^2)_\infty  \right]  \nonumber \\
&=(q^2;q^2)_\infty\mathrm{CT}\left[ \sum_{i\geq 0}\frac{(q^2;q^4)_i(q^{-2}z^2)^i}{(q^4;q^4)_i}   \sum_{j\geq 0}\frac{(-z^{-1})^jq^{j^2-j}}{(q^2;q^2)_j}   \right] \nonumber \\
&=(q^2;q^2)_\infty \sum_{i\geq 0}\frac{q^{4i^2-4i}}{(q^4;q^4)^2_i}  =\frac{2(q^2;q^2)_\infty}{(q^4;q^4)_\infty}.   \quad \text{(by \eqref{5yue14ri})}
\end{align}

This proves \eqref{(1,2,2)-new-19}.
\end{proof}

The next theorem provides some new companions to \cite[Theorem 4.5]{Cao-Wang}.
\begin{theorem}\label{thm-9}
We have
\begin{align}
\sum_{i,j,k\geq 0}\frac{(-1)^{i+k}q^{i^2+2j^2+k^2+2ij-2ik-2jk-j+k}a^{i+j}}{(q;q)_i(q^2;q^2)_j(q^2;q^2)_k}&=(q^2;q^2)_\infty, \quad (a \in \mathbb{C}) \label{(1,2,2)--1} \\
\sum_{i,j,k\geq 0}\frac{(-1)^{i+k}q^{i^2+2j^2+k^2+2ij-2ik-2jk+i+k}a^j}{(q;q)_i(q^2;q^2)_j(q^2;q^2)_k}&=(q^2;q^2)_\infty, \quad (a\in \mathbb{C}) \label{(1,2,2)--2} \\
\sum_{i,j,k\geq 0}\frac{q^{i^2+2j^2+k^2+2ij-2ik-2jk+j+k}}{(q;q)_i(q^2;q^2)_j(q^2;q^2)_k}&=\frac{(q^4;q^4)_\infty}{(q;q)^2_\infty}, \label{(1,2,2)--3} \\
\sum_{i,j,k\geq 0}\frac{(-1)^iq^{i^2+2j^2+k^2+2ij-2ik-2jk+j+k}}{(q;q)_i(q^2;q^2)_j(q^2;q^2)_k}&={(q;q^2)^2_\infty}{(q^4;q^4)_\infty}, \label{(1,2,2)--5} \\
\sum_{i,j,k\geq 0}\frac{(-1)^iq^{i^2+2j^2+k^2+2ij-2ik-2jk+j+2k}}{(q;q)_i(q^2;q^2)_j(q^2;q^2)_k}&={(q;q)_\infty}{(q^2;q^4)_\infty}, \label{(1,2,2)--6} \\
\sum_{i,j,k\geq 0}\frac{(-1)^{i+k}q^{i^2+2j^2+k^2+2ij-2ik-2jk-i-j-k}a^{i+j}b^k}{(q;q)_i(q^2;q^2)_j(q^2;q^2)_k}&=\frac{(a,b;q^2)_\infty}{{(ab/q^2;q^2)_\infty}},\label{(1,2,2)--16} \\
\sum_{i,j,k\geq 0}\frac{(-1)^{i+k}q^{i^2+2j^2+k^2+2ij-2ik-2jk-2i-3j-k}a^{i+j}b^k}{(q;q)_i(q^2;q^2)_j(q^2;q^2)_k}&=\frac{(a,b;q^2)_\infty}{{(ab/q^2;q^2)_\infty}}.\label{(1,2,2)--17}
\end{align}
\end{theorem}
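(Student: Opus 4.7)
The driving observation is the factorization
\[
i^2+2j^2+k^2+2ij-2ik-2jk=(i+j-k)^2+j^2,
\]
so in every identity the $k$-dependent part of the exponent has the shape $k^2-2(i+j)k+\lambda k$ for a small integer $\lambda$ depending on the case. Summing over $k$ first by \eqref{Euler-2} (with $q$ replaced by $q^2$, and accounting for the sign $(\pm1)^k$ and any parameter $b^k$) evaluates the $k$-sum to a $q$-Pochhammer product whose argument is $q^{-2(i+j)}$ up to a small $q$-shift.

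For \eqref{(1,2,2)--1} and \eqref{(1,2,2)--2}, this evaluation produces $(q^{2-2(i+j)};q^2)_\infty$, which contains the vanishing factor $1-q^0$ whenever $i+j\geq 1$. Only the $(i,j)=(0,0)$ term survives, the parameter $a$ cancels, and both triple sums collapse at once to $(q^2;q^2)_\infty$.

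For the bilateral pair \eqref{(1,2,2)--16} and \eqref{(1,2,2)--17}, the $k$-sum produces $(bq^{-2(i+j)};q^2)_\infty$, which I would refactor as
\[
(bq^{-2n};q^2)_\infty=(-b)^n q^{-n(n+1)}(q^2/b;q^2)_n(b;q^2)_\infty,\qquad n=i+j.
\]
Grouping the remaining double sum by $n$ reduces the problem to evaluating the inner sum
\[
T_n(\delta):=\sum_{j=0}^n\frac{(-1)^j q^{j^2+\delta j}}{(q;q)_{n-j}(q^2;q^2)_j},
\]
with $\delta=0$ for \eqref{(1,2,2)--16} and $\delta=-1$ for \eqref{(1,2,2)--17}. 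Its ordinary generating function in $n$ is, via \eqref{Euler-1} and \eqref{Euler-2}, equal to $(q^{1+\delta}z;q^2)_\infty/(z;q)_\infty$, which simplifies through $(z;q)_\infty=(z;q^2)_\infty(qz;q^2)_\infty$ to $1/(z;q^2)_\infty$ or $1/(qz;q^2)_\infty$ respectively. Hence $T_n(0)=1/(q^2;q^2)_n$ and $T_n(-1)=q^n/(q^2;q^2)_n$. In both identities the surviving single sum becomes $(b;q^2)_\infty\sum_n(q^2/b;q^2)_n(ab/q^2)^n/(q^2;q^2)_n$, and is summed by the $q$-binomial theorem \eqref{eq-qbinomial} at base $q^2$ to the common right side $(a,b;q^2)_\infty/(ab/q^2;q^2)_\infty$.

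The three non-parametric identities \eqref{(1,2,2)--3}, \eqref{(1,2,2)--5}, \eqref{(1,2,2)--6} are more delicate: here the $k$-sum yields a nonvanishing factor of shape $(-q^{2-2(i+j)};q^2)_\infty$ or $(-q^{3-2(i+j)};q^2)_\infty$. My plan is to apply the constant term method directly to the triple sum: introduce an auxiliary variable $z$ tracking $i+j-k$, represent $q^{(i+j-k)^2}$ via Jacobi's triple product \eqref{Jacobi}, and evaluate the three geometric sums using \eqref{Euler-1} and \eqref{Euler-2} to obtain a constant term in $z$ of a ratio of theta-like products. After a substitution such as $z\mapsto qz$ and cancellation of a common $(qz;q^2)_\infty$ factor, the constant term should reduce to a known single-sum identity from Section~\ref{sec-pre}. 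The hardest part will be exactly this final step: the right sides such as $(q^4;q^4)_\infty/(q;q)^2_\infty$ and $(q;q^2)^2_\infty(q^4;q^4)_\infty$ are modular but not of $q$-Gauss shape, so matching them may require a 2-dissection of the constant term or a direct appeal to a Slater-type identity such as \eqref{Slater13}.
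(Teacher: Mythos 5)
Your direct-summation treatment of \eqref{(1,2,2)--1}, \eqref{(1,2,2)--2}, \eqref{(1,2,2)--16} and \eqref{(1,2,2)--17} is correct and genuinely different from the paper's. The paper proves the whole theorem by the constant term method, writing the sum as $\CT\big[(vz,z^{-1},zq^2,q^2;q^2)_\infty/(uz,uqz,wz^{-1};q^2)_\infty\big]$ and specializing $(u,v,w)$; in particular \eqref{(1,2,2)--16}--\eqref{(1,2,2)--17} are there reduced to the Cao--Wang double-sum identity \eqref{eq-Cao-Wang-Thm31} rather than proved from scratch. Your route --- summing over $k$ first via \eqref{Euler-2}, observing that $(q^{2-2(i+j)};q^2)_\infty=0$ for $i+j\geq 1$ so that only the $(i,j)=(0,0)$ term survives in the first two cases, and in the last two cases extracting $(b;q^2)_\infty(-b)^nq^{-n(n+1)}(q^2/b;q^2)_n$, evaluating the inner convolution $T_n(\delta)$ by its generating function, and finishing with the $q$-binomial theorem --- is self-contained and arguably more elementary. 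I checked the computations: $T_n(0)=1/(q^2;q^2)_n$, $T_n(-1)=q^n/(q^2;q^2)_n$, and the exponent bookkeeping does produce $(ab/q^2)^n$ in both parametrized cases, so \eqref{eq-qbinomial} at base $q^2$ gives the stated right-hand side. What your approach buys is a proof of \eqref{(1,2,2)--16}--\eqref{(1,2,2)--17} that does not rely on the external result \eqref{eq-Cao-Wang-Thm31}.

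The gap is in \eqref{(1,2,2)--3}, \eqref{(1,2,2)--5} and \eqref{(1,2,2)--6}: there you only describe a plan (``the constant term should reduce to a known single-sum identity'') without carrying it out, and the step you flag as hardest is exactly where the content lies. Concretely, the paper's kernel with $(u,v,w)=(-q,q^3,-1)$, $(q,q^3,-1)$ and $(q,q^3,-q)$ respectively collapses, after cancelling $(q^3z;q^2)_\infty$ against part of $(\pm qz;q)_\infty$ or $(qz;q^2)_\infty$, to a product of two one-variable series whose diagonal is a single sum: for \eqref{(1,2,2)--3} it is a ${}_2\phi_1$ with upper parameters $\zeta_4,-\zeta_4$ evaluated by the $q$-Gauss sum \eqref{eq-Gauss}, and for \eqref{(1,2,2)--5} and \eqref{(1,2,2)--6} it is $\sum_n(-1;q^2)_n(-q)^n/(q^2;q^2)_n$ resp.\ $\sum_n(-1/q;q^2)_n(-q^2)^n/(q^2;q^2)_n$, each summed by \eqref{eq-qbinomial}. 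No $2$-dissection and no appeal to \eqref{Slater13} is needed. Until you identify these specializations and closed-form evaluations, those three identities remain unproved in your write-up.
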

\begin{proof}
We define
\begin{align}\label{F-defn-122-cor}
F(u,v,w)=F(u,v,w;q):=\sum_{i,j,k\geq 0} \frac{(-1)^{i+k}u^iv^jw^kq^{i^2+2j^2+k^2+2ij-2ik-2jk-i-2j+k}}{(q;q)_i(q^2;q^2)_j(q^2;q^2)_k}.
\end{align}
By \eqref{Euler-1}, \eqref{Euler-2} and \eqref{Jacobi} we have
\begin{align}
F(u,v,w)&=\mathrm{CT}\left[  \sum_{i\geq 0}\frac{u^i z^i }{(q;q)_i} \sum_{j\geq 0}\frac{(-1)^j v^j z^j q^{j^2-j}}{(q^2;q^2)_j} \sum_{k\geq 0}\frac{ w^k  z^{-k} }{(q^2;q^2)_k} \sum_{n=-\infty}^\infty (-1)^nq^{n^2-n}z^{-n} \right]  \nonumber \\
&=\mathrm{CT}\left[  \frac{(vz,z^{-1},zq^2,q^2;q^2)_\infty}{(uz,uqz,wz^{-1};q^2)_\infty} \right].  \label{F-const}
\end{align}

 Setting $(u,v,w)=(aq,aq,1)$, we have
\begin{align}
&F(aq,aq,1)=\mathrm{CT}\left[      \frac{(q^2z, q^2;q^2)_\infty}{(aq^{2}z;q^2)_\infty}  \right]
=(q^2;q^2)_\infty.
\end{align}
This proves \eqref{(1,2,2)--1}.
		
Setting $(u,v,w)=(q^2,aq^2,1)$, we have
\begin{align}
&F(q^2,aq^{2},1)=\mathrm{CT}\left[  \frac{(aq^{2}z, q^2;q^2)_\infty}{(q^3 z;q^2)_\infty} \right]
=(q^2;q^2)_\infty.
\end{align}
This proves \eqref{(1,2,2)--2}.
		
Setting $(u,v,w)=(-q,q^3,-1)$, we have
\begin{align}
& F(-q,q^3,-1)=\CT \left[\frac{(q^3z;q^2)_\infty (q^2z,z^{-1},q^2;q^2)_\infty}{(-qz;q)_\infty (-z^{-1};q^2)_\infty} \right] \nonumber \\
&=(q^2;q^2)_\infty \CT \left[\frac{(q^2z;q)_\infty (z^{-1};q^2)_\infty}{(-qz;q)_\infty (-z^{-1};q^2)_\infty} \right] \nonumber \\
&=(q^2;q^2)_\infty  \CT \sum_{n=0}^\infty \frac{(-q;q)_n(-qz)^n}{(q;q)_n}\sum_{k=0}^\infty \frac{(-1;q^2)_k}{(q^2;q^2)_k}(-z^{-1})^k\nonumber \\
&= (q^2;q^2)_\infty \sum_{n=0}^\infty \frac{(-q;q)_n(-1;q^2)_nq^n}{(q;q)_n(q^2;q^2)_n} \nonumber \\
&=\sum_{n=0}^\infty \frac{(\zeta_4,-\zeta_4;q)_nq^n}{(q,q;q)_n}=\frac{(q^4;q^4)_\infty}{(q;q)_\infty^2}.
\end{align}
Her for the last equality we used \eqref{eq-Gauss}.
This proves \eqref{(1,2,2)--3}.

Setting $(u,v,w)=(q,q^3,-1)$, we have
\begin{align}
&F(q,q^3,-1)=\mathrm{CT}\left[  \frac{(q^3z,z^{-1},q^2;q^2)_\infty}{(qz,-z^{-1};q^2)_\infty}  \right] \nonumber \\
&=(q^2;q^2)_\infty\mathrm{CT}\left[      \sum_{i\geq 0}{q^i z^i } \sum_{n=0}^\infty \frac{(-1;q^2)_n(-z^{-1})^n}{(q^2;q^2)_n}  \right] \nonumber \\
&=(q^2;q^2)_\infty \sum_{n=0}^\infty\frac{(-1;q^2)_n(-q)^n}{(q^2;q^2)_n}=(q;q^2)^2_\infty(q^4;q^4)_\infty.
\end{align}
This proves \eqref{(1,2,2)--5}.
		
Setting $(u,v,w)=(q,q^3,-q)$, we have
\begin{align}
&F(q,q^3,-q)=\mathrm{CT}\left[  \frac{(q^3z,z^{-1},q^2;q^2)_\infty}{(qz,-qz^{-1};q^2)_\infty} \right] \nonumber \\
&=(q^2;q^2)_\infty\mathrm{CT}\left[ \sum_{i\geq 0}{q^i z^i } \sum_{n=0}^\infty \frac{(-1/q;q^2)_n(-qz^{-1})^n}{(q^2;q^2)_n}  \right] \nonumber \\
&=(q^2;q^2)_\infty \sum_{n=0}^\infty\frac{(-1/q;q^2)_n(-q^2)^n}{(q^2;q^2)_n}=(q;q)_\infty(q^2;q^4)_\infty.
\end{align}
This proves \eqref{(1,2,2)--6}.
		


We have
\begin{align}
&F(a,aq,bq^{-2})=\mathrm{CT}\left[ \frac{1}{(az,bq^{-2}z^{-1};q^2)_\infty} \sum_{n=-\infty}^\infty (-1)^nq^{n^2-n}z^{-n} \right] \\
&=\mathrm{CT}\left[  \sum_{i\geq 0}\frac{a^i z^i}{(q^2;q^2)_i} \sum_{j\geq 0}\frac{(bq^{-2}z^{-1})^j}{(q^2;q^2)_j}  \sum_{n=-\infty}^\infty (-1)^nq^{n^2-n}z^{-n} \right] =\sum_{i,j\geq 0}\frac{q^{i^2+j^2-2ij-i-j}a^ib^j}{(q^2;q^2)_i(q^2;q^2)_j}. \nonumber
\end{align}
By \eqref{eq-Cao-Wang-Thm31} we prove \eqref{(1,2,2)--16}. In the same way, we have
\begin{align}
&F(aq^{-1},aq^{-1},bq^{-2}) =\sum_{i,j\geq 0}\frac{q^{i^2+j^2-2ij-i-j}a^ib^j}{(q^2;q^2)_i(q^2;q^2)_j}. \nonumber
\end{align}
By \eqref{eq-Cao-Wang-Thm31} we prove \eqref{(1,2,2)--17}.
\end{proof}

\subsection{Identities of index $(1,2,4)$}
We find some new companions to \cite[Theorem 4.8]{Cao-Wang}.
\begin{theorem}\label{thm-8}
We have
\begin{align}
\sum_{i,j,k\geq 0}\frac{(-1)^jq^{i^2+j^2+6k^2+2ij+4ik+4jk+2k}}{(q;q)_i(q^2;q^2)_j(q^4;q^4)_k}&=\frac{(q^6,q^{10},q^{16};q^{16})_\infty}{(q^2;q^2)_\infty}, \label{(1,2,4)-2} \\
\sum_{i,j,k\geq 0}\frac{(-1)^jq^{i^2+j^2+6k^2+2ij+4ik+4jk-i-2k}}{(q;q)_i(q^2;q^2)_j(q^4;q^4)_k}&=\frac{2(q^6,q^{10},q^{16};q^{16})_\infty}{(q^2;q^2)_\infty}, \label{(1,2,4)-1} \\
\sum_{i,j,k\geq 0}\frac{(-1)^jq^{i^2+j^2+6k^2+2ij+4ik+4jk+i+2k}}{(q;q)_i(q^2;q^2)_j(q^4;q^4)_k}&=\frac{(-q^4;q^{8})_\infty}{(q^3,q^5,-q,-q^7;q^{8})_\infty\cdot(q^4;q^{8})_\infty}, \label{(1,2,4)-4} \\
\sum_{i,j,k\geq 0}\frac{(-1)^jq^{i^2+j^2+6k^2+2ij+4ik+4jk+2j+2k}}{(q;q)_i(q^2;q^2)_j(q^4;q^4)_k}&=\frac{(-q^4;q^{8})_\infty}{(q,q^7,-q^3,-q^5;q^{8})_\infty\cdot(q^4;q^{8})_\infty}, \label{(1,2,4)-3} \\
\sum_{i,j,k\geq 0}\frac{(-1)^jq^{i^2+j^2+6k^2+2ij+4ik+4jk+2i+2j+6k}}{(q;q)_i(q^2;q^2)_j(q^4;q^4)_k}&=\frac{(q^2,q^{14},q^{16};q^{16})_\infty}{(q^2;q^2)_\infty}, \label{(1,2,4)-5} \\
\sum_{i,j,k\geq 0}\frac{(-1)^kq^{i^2+j^2+6k^2+2ij+4ik+4jk}}{(q;q)_i(q^2;q^2)_j(q^4;q^4)_k}&=(-q;q^2)_\infty^2, \label{(1,2,4)-7} \\
\sum_{i,j,k\geq 0}\frac{(-1)^kq^{i^2+j^2+6k^2+2ij+4ik+4jk+j}}{(q;q)_i(q^2;q^2)_j(q^4;q^4)_k}&=\frac{(q^3,q^{3},q^6;q^{6})_\infty}{(q;q)_\infty}, \label{(1,2,4)-8} \\
\sum_{i,j,k\geq 0}\frac{(-1)^kq^{i^2+j^2+6k^2+2ij+4ik+4jk+i+3j+4k}}{(q;q)_i(q^2;q^2)_j(q^4;q^4)_k}&=\frac{(q,q^5,q^6;q^6)_\infty}{(q;q)_\infty}, \label{(1,2,4)-11} \\
\sum_{i,j,k\geq 0}\frac{(-1)^kq^{i^2+j^2+6k^2+2ij+4ik+4jk+i+2k}}{(q;q)_i(q^2;q^2)_j(q^4;q^4)_k}&=\frac{1}{(q;q^2)_\infty}, \label{(1,2,4)-10} \\
\sum_{i,j,k\geq 0}\frac{(-1)^kq^{i^2+j^2+6k^2+2ij+4ik+4jk+2i+j+4k}}{(q;q)_i(q^2;q^2)_j(q^4;q^4)_k}&=\frac{(q,q^5,q^6;q^6)_\infty}{(q;q)_\infty}, \label{(1,2,4)-13} \\
\sum_{i,j,k\geq 0}\frac{(-1)^{i+k}q^{i^2+j^2+6k^2+2ij+4ik+4jk-2i-2j-2k}}{(q;q)_i(q^2;q^2)_j(q^4;q^4)_k}&=(q^6,q^{10},q^{16};q^{16})_\infty (q^4,q^{28};q^{32})_\infty, \label{(1,2,4)-16} \\
\sum_{i,j,k\geq 0}\frac{(-1)^{i+k}q^{i^2+j^2+6k^2+2ij+4ik+4jk+2k}}{(q;q)_i(q^2;q^2)_j(q^4;q^4)_k}&=
(q^2,q^{14},q^{16};q^{16})_\infty (q^{12},q^{20};q^{32})_\infty,  \label{(1,2,4)-17} \\
\sum_{i,j,k\geq 0}\frac{(-1)^{i+k}q^{i^2+j^2+6k^2+2ij+4ik+4jk+2i+2j+6k}}{(q;q)_i(q^2;q^2)_j(q^4;q^4)_k}&=(q^6,q^{10},q^{16};q^{16})_\infty (q^4,q^{28};q^{32})_\infty, \label{(1,2,4)-18} \\
\sum_{i,j,k\geq 0}\frac{(-1)^{i+k}q^{i^2+j^2+6k^2+2ij+4ik+4jk-i-j-4k}a^{i+j+2k}}{(q;q)_i(q^2;q^2)_j(q^4;q^4)_k}&=(aq;q^{2})_\infty, \label{(1,2,4)-19} \\
\sum_{i,j,k\geq 0}\frac{(-1)^{i+k}q^{i^2+j^2+6k^2+2ij+4ik+4jk-i-2k}a^{i+j+2k}}{(q;q)_i(q^2;q^2)_j(q^4;q^4)_k}&=(a;q^{2})_\infty, \label{(1,2,4)-20}  \\
\sum_{i,j,k\geq 0}\frac{(-1)^{i+k}q^{i^2+j^2+6k^2+2ij+4ik+4jk}}{(q;q)_i(q^2;q^2)_j(q^4;q^4)_k}&=(q^2;q^4)_\infty, \label{(1,2,4)-21} \\
\sum_{i,j,k\geq 0}\frac{(-1)^{j+k}q^{i^2+j^2+6k^2+2ij+4ik+4jk-2i-2j-6k}}{(q;q)_i(q^2;q^2)_j(q^4;q^4)_k}&=\frac{2}{(q^2;q^4)_\infty}, \label{(1,2,4)-22} \\
\sum_{i,j,k\geq 0}\frac{(-1)^{j+k}q^{i^2+j^2+6k^2+2ij+4ik+4jk-i-2k}}{(q;q)_i(q^2;q^2)_j(q^4;q^4)_k}&=\frac{2}{(q^2;q^4)_\infty}, \label{(1,2,4)-23} \\
\sum_{i,j,k\geq 0}\frac{(-1)^{j+k}q^{i^2+j^2+6k^2+2ij+4ik+4jk}}{(q;q)_i(q^2;q^2)_j(q^4;q^4)_k}&=\frac{(q^6,q^6,q^{12};q^{12})_\infty}{(q^2;q^2)_\infty}, \label{(1,2,4)-26} \\
\sum_{i,j,k\geq 0}\frac{(-1)^{j+k}q^{i^2+j^2+6k^2+2ij+4ik+4jk+i+2j+2k}}{(q;q)_i(q^2;q^2)_j(q^4;q^4)_k}&=\frac{1}{(q^2;q^4)_\infty}, \label{(1,2,4)-27} \\
\sum_{i,j,k\geq 0}\frac{(-1)^{j+k}q^{i^2+j^2+6k^2+2ij+4ik+4jk+2i+2j+4k}}{(q;q)_i(q^2;q^2)_j(q^4;q^4)_k}&=\frac{1}{(q^4,q^6,q^8;q^{12})_\infty}.\label{(1,2,4)-29}
\end{align}
\end{theorem}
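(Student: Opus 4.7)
The key observation is that the quadratic form $i^2+j^2+6k^2+2ij+4ik+4jk=(i+j+2k)^2+2k^2$ is common to all twenty claims of the theorem. The plan is therefore to introduce a single master generating function
\[
F(u,v,w):=\sum_{i,j,k\geq 0}\frac{u^iv^jw^kq^{i^2+j^2+6k^2+2ij+4ik+4jk}}{(q;q)_i(q^2;q^2)_j(q^4;q^4)_k},
\]
so that the four sign patterns $(-1)^{j}$, $(-1)^{k}$, $(-1)^{i+k}$, $(-1)^{j+k}$ appearing in the theorem, and all of the various linear shifts in the exponent, are controlled uniformly by the choice of $(u,v,w)$ (with appropriate signs and powers of $q$).

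To reduce $F$ to a tractable object, I apply \eqref{Euler-1} to $\sum_i$ and $\sum_j$, the substitution $q\mapsto q^2$ in \eqref{Euler-2} to $\sum_k$, and the Jacobi triple product identity \eqref{Jacobi} in the form
\[
\sum_{n=-\infty}^{\infty} q^{n(n-1)}z^{-n}=(q^2,-z^{-1},-q^2 z;q^2)_\infty,
\]
to extract the factor $q^{(i+j+2k)(i+j+2k-1)}$. This should yield the compact constant-term representation
\[
F(u,v,w)=\CT\!\left[\frac{(-wq^4z^2;q^4)_\infty(q^2,-z^{-1},-q^2 z;q^2)_\infty}{(uqz;q)_\infty(vqz;q^2)_\infty}\right],
\]
reducing each identity in the theorem to the evaluation of this constant term for an explicit choice of $(u,v,w)$.

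For each specialization I will expand one factor in the integrand as a power series in $z$ via \eqref{Euler-1}, \eqref{Euler-2}, or \eqref{eq-qbinomial}; the extraction then collapses to a single $q$-series which should match a known evaluation from Section~\ref{sec-pre}. Concretely, I expect \eqref{Slater38} and \eqref{Slater39} to handle the mod-$16$ identities \eqref{(1,2,4)-1}, \eqref{(1,2,4)-2}, \eqref{(1,2,4)-5}; \eqref{Gollnitz-(2.22)}, \eqref{Gollnitz-(2.24)}, \eqref{Slater34}, \eqref{Slater36} to handle \eqref{(1,2,4)-3}, \eqref{(1,2,4)-4}; \eqref{Wang-(1.22)}, \eqref{Slater27}, \eqref{Slater28} to handle the mod-$6$ identities \eqref{(1,2,4)-8}, \eqref{(1,2,4)-11}, \eqref{(1,2,4)-13}, \eqref{(1,2,4)-26}, \eqref{(1,2,4)-29}; \eqref{5yue14ri} to explain the prefactor $2$ in \eqref{(1,2,4)-22}, \eqref{(1,2,4)-23}; and the Cao--Wang identities \eqref{Cao-Wang-3.8-1}, \eqref{Cao-Wang-3.8-2}, together with our own \eqref{(1,2)-new-1}, to cover the parametric identities \eqref{(1,2,4)-19}, \eqref{(1,2,4)-20}.

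The main obstacle will be the three cases \eqref{(1,2,4)-16}--\eqref{(1,2,4)-18}, whose product sides carry modulus $32$: no mod-$32$ single-sum evaluation appears in Section~\ref{sec-pre}. For these I plan to split the inner $k$-sum according to the parity of the summation index, a $2$-dissection analogous to the one used in the proof of \eqref{(4,8)-8}, so that each piece collapses separately to one of the mod-$16$ identities \eqref{Slater38}, \eqref{Slater39} and the two halves recombine into the claimed mod-$32$ product. All remaining verifications are routine $z$-expansions of exactly the same character as the ones already carried out for Theorems \ref{thm-6} and \ref{thm-9}, so the only substantive bookkeeping will be to match, case by case, the linear exponent of the collapsed single sum with the shift appearing in the chosen Slater--Ramanujan evaluation.
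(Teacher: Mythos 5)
Your proposal follows essentially the same route as the paper: the paper's proof introduces exactly this master series (with the signs and the shift $-i-j-4k$ absorbed into the definition of $F(u,v,w)$ rather than into the specializations, a cosmetic difference), derives the constant-term representation $\CT\bigl[(wz^2;q^4)_\infty(z^{-1},q^2z,q^2;q^2)_\infty/((uz;q)_\infty(vz;q^2)_\infty)\bigr]$ via \eqref{Euler-1}, \eqref{Euler-2} and \eqref{Jacobi}, and then disposes of each case by specializing $(u,v,w)$ and matching a known single sum. Your case-by-case guesses for which auxiliary identities to invoke differ in places from the paper's actual choices (the paper leans on the Bressoud identities \eqref{Bressoud-1}--\eqref{Bressoud-3} and on Theorem \ref{thm-7} for the mod-$6$ cases, and evaluates \eqref{(1,2,4)-19}, \eqref{(1,2,4)-20} directly from \eqref{Euler-2} rather than via Cao--Wang), but these are interchangeable details.

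The one place your plan would stumble is \eqref{(1,2,4)-16}--\eqref{(1,2,4)-18}. Your premise that no single-sum evaluation in Section \ref{sec-pre} produces a modulus-$32$ product is mistaken: \eqref{Slater38} and \eqref{Slater39} carry the factors $(q^2,q^{14};q^{16})_\infty$ and $(q^6,q^{10};q^{16})_\infty$, which become $(q^4,q^{28};q^{32})_\infty$ and $(q^{12},q^{20};q^{32})_\infty$ after $q\mapsto q^2$ --- exactly the products on the right of \eqref{(1,2,4)-16}--\eqref{(1,2,4)-18}. For these specializations the numerator and denominator of the integrand cancel down to a theta function divided by $1\pm q^{c}z^2$, and expanding that rational factor geometrically collapses the constant term to $(q^2;q^2)_\infty\sum_i q^{4i^2}/(q^2;q^2)_{2i}$ or $(q^2;q^2)_\infty\sum_i q^{4i^2+4i}/(q^2;q^2)_{2i+1}$, i.e.\ S.\ 39 and S.\ 38 at $q^2$. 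Your proposed alternative --- $2$-dissecting the original $k$-sum by the parity of $k$ --- is not the parity that matters here (what gets sorted is the power of $z$, essentially $i+j+2k$ against the residual index), and I see no reason the two halves would separately be S.\ 38/S.\ 39-type sums; you should replace that step with the direct cancellation just described.
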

\begin{proof}
We define
\begin{align}\label{Thm124-F-start}
F(u,v,w)=F(u,v,w;q):=\sum_{i,j,k\geq 0} \frac{(-1)^{i+j+k}u^iv^jw^kq^{i^2+j^2+6k^2+2ij+4ik+4jk-i-j-4k}}{(q;q)_i(q^2;q^2)_j(q^4;q^4)_k}.
\end{align}
By \eqref{Euler-1}, \eqref{Euler-2} and \eqref{Jacobi} we have
\begin{align}
&F(u,v,w)=\CT \sum_{i\geq 0}\frac{u^i z^i }{(q;q)_i} \sum_{j\geq 0}\frac{v^j z^j }{(q^2;q^2)_j} \sum_{k\geq 0}\frac{ (-1)^k w^k  z^{2k} q^{2k^2-2k}}{(q^4;q^4)_k} \sum_{n=-\infty}^\infty (-1)^nq^{n^2-n}z^{-n}  \nonumber \\
&=\CT \frac{(wz^2;q^4)_\infty(z^{-1},q^2z,q^2;q^2)_\infty}{(uz;q)_\infty(vz;q^2)_\infty} \nonumber   \\
&=\CT \frac{(w^{1/2}z,-w^{1/2}z,z^{-1},zq^2,q^2;q^2)_\infty}{(uz,uqz,vz;q^2)_\infty}. \label{F-start}
\end{align}
Setting $(u,v,w)=(-q,q,-q^6)$ and using \eqref{Euler-1}--\eqref{eq-qbinomial}, we have
\begin{align}
&F(-q,q,-q^6)
=\CT\frac{(-q^6z^2;q^4)_\infty (z^{-1},q^2z,q^2;q^2)_\infty}{(-qz;q)_\infty (qz;q^2)_\infty} \label{1-(1,2,4)-2} \\
&=\CT \frac{1}{(-q^2z;q^2)_\infty} \cdot \frac{(-q^6z^2;q^4)_\infty}{(q^2z^2;q^4)_\infty} \cdot (z^{-1},q^2z,q^2;q^2)_\infty \nonumber \\
&=\CT \sum_{i\geq 0} \frac{(-1)^iq^{2i}z^i}{(q^2;q^2)_i} \sum_{j=0}^\infty \frac{(-q^4;q^4)_jq^{2j}z^{2j}}{(q^4;q^4)_j} \sum_{k=-\infty}^\infty (-1)^kq^{k^2-k}z^{-k} \nonumber \\
&=\sum_{i,j\geq 0} \frac{(-q^4;q^4)_j q^{(i+2j)^2+i}}{(q^2;q^2)_i(q^4;q^4)_j} =\sum_{j\geq 0}\frac{q^{4j^2}(-q^4;q^4)_j}{(q^4;q^4)_j} \sum_{i\geq 0} \frac{q^{i^2+(4j+1)i}}{(q^2;q^2)_i} \nonumber \\
&=\sum_{j\geq 0} \frac{q^{4j^2}(-q^4;q^4)_j}{(q^4;q^4)_j} (-q^{4j+2};q^2)_\infty =(-q^2;q^2)_\infty \sum_{j\geq 0}\frac{q^{4j^2}(-q^4;q^4)_j}{(q^4;q^4)_j(-q^2;q^2)_{2j}} \nonumber \\
&=(-q^2;q^2)_\infty \sum_{j\geq 0} \frac{q^{4j^2}}{(-q^2,q^4;q^4)_j}. \label{F-relation-0}
\end{align}
Substituting \eqref{Slater39} with $q$ replaced by $-q^2$ into it, we obtain \eqref{(1,2,4)-2}.

Setting $(u,v,w)=(-1,q,-q^2)$, we have
\begin{align}
& F(-1,q,-q^2)=\CT \frac{1}{(-z;q^2)_\infty} \cdot \frac{(-q^2z^2;q^4)_\infty}{(q^2z^2;q^4)_\infty} \cdot (z^{-1},q^2z,q^2;q^2)_\infty \label{1-(1,2,4)-1}
\end{align}
The right side is the same with the right side of \eqref{05yue.10ri-1}. Hence by the deduction there,  we obtain \eqref{(1,2,4)-1}.

Setting $(u,v,w)=(-q^2,q,-q^6)$, we have
\begin{align}
&F(-q^2,q,-q^6)=\CT \left[(1+qz)\frac{1}{(-q^2z;q^2)_\infty}\cdot \frac{(-q^6z^2;q^4)_\infty}{(q^2z^2;q^4)_\infty} \cdot (z^{-1},q^2z,q^2;q^2)_\infty  \right] \nonumber \\
&=\sum_{i,j\geq 0} \frac{(-q^4;q^4)_j (q^{(i+2j)^2+i}-q^{(i+2j+1)^2+i})}{(q^2;q^2)_i(q^4;q^4)_j} \nonumber \\
&=(-q^2;q^2)_\infty \sum_{j\geq 0} \frac{q^{4j^2}}{(-q^2;q^4)_j(q^4;q^4)_j}-q(-q^2;q^2)_\infty \sum_{j\geq 0} \frac{q^{4j^2+4j}}{(-q^2;q^4)_{j+1}(q^4;q^4)_j}. \label{F-relation-1}
\end{align}
Substituting  \eqref{Slater38} and \eqref{Slater39} into it, we prove \eqref{(1,2,4)-4}.

In the same way, we have
\begin{align}
&F(-q,q^3,-q^6)=\CT\left[\frac{(1-qz)(-q^6z^2;q^4)_\infty (z^{-1},q^2z,q^2;q^2)_\infty}{(-q^2z;q^2)_\infty (q^2z^2;q^4)_\infty}\right] \nonumber \\
&=(-q^2;q^2)_\infty \sum_{j\geq 0} \frac{q^{4j^2}}{(-q^2;q^4)_j(q^4;q^4)_j}+q(-q^2;q^2)_\infty \sum_{j\geq 0} \frac{q^{4j^2+4j}}{(-q^2;q^4)_{j+1}(q^4;q^4)_j}.  \label{F-relation-2}
\end{align}
Substituting \eqref{Slater38} and \eqref{Slater39} into it, we prove \eqref{(1,2,4)-3}.

Similarly, we have
\begin{align}\label{F-relation-3}
&F(-q^3,q^3,-q^{10})=\CT \left[ \frac{1}{(-q^4z;q^2)_\infty} \cdot \frac{(-q^{10}z^2;q^4)_\infty}{(q^6z^2;q^4)_\infty} \cdot (z^{-1},q^2z,q^2;q^2)_\infty \right] \nonumber \\
&=\sum_{i,j\geq 0} \frac{(-q^4;q^4)_jq^{(i+2j)^2+3i+4j}}{(q^2;q^2)_i(q^4;q^4)_j} =(-q^2;q^2)_\infty \sum_{j\geq 0} \frac{q^{4j^2+4j}}{(-q^2;q^4)_{j+1}(q^4;q^4)_j}.
\end{align}
Substituting \eqref{Slater38} into it, we prove \eqref{(1,2,4)-5}.

  Setting $(u,v,w)=(-q,-q,q^{4})$, we have
\begin{align}
&F(-q,-q,q^{4})=\CT \left[\frac{(q^2 z,z^{-1},q^2z,q^2;q^2)_\infty}{(-q z,-q z;q^2)_\infty} \right] \nonumber \\
&=\mathrm{CT}\left[  \sum_{i\geq 0}\frac{(-zq^2)^i q^{i^2-i} }{(q^2;q^2)_i} \sum_{j\geq 0}\frac{(-qz)^j }{(q^2;q^2)_j}\sum_{k\geq 0}\frac{(-qz)^k }{(q^2;q^2)_k}  \sum_{n=-\infty}^\infty (-1)^nq^{n^2-n}z^{-n} \right] \nonumber \\
&=\sum_{i,j,k\geq0}\frac{q^{(i+j+k)^2+i^2}}{(q^2;q^2)_i(q^2;q^2)_j(q^2;q^2)_k}.
\end{align}
Using \eqref{(2,2,2)-1} we prove \eqref{(1,2,4)-7}.

Setting $(u,v,w)=(-q,-q^2,q^4)$, we have
\begin{align}
&F(-q,-q^2,q^4)=\CT \frac{(q^2z;q^2)_\infty (q^2z,z^{-1},q^2;q^2)_\infty}{(-qz;q)_\infty} \nonumber \\
&=\CT \sum_{i\geq 0} \frac{(-qz)^i}{(q;q)_i} \sum_{j\geq 0} \frac{(-1)^jz^jq^{j^2+j}}{(q^2;q^2)_j} \sum_{k=-\infty}^\infty (-1)^kq^{k^2-k}z^{-k} \nonumber \\
&=\sum_{i,j\geq 0} \frac{q^{(i+j)^2+j^2}}{(q;q)_i(q^2;q^2)_j}.
\end{align}
Now by \eqref{Bressoud-1} we obtain  \eqref{(1,2,4)-8}.

Similarly, we have
\begin{align}
F(-q^2,-q,q^6)=\CT \left[\frac{(q^3z;q^2)_\infty (z^{-1},q^2z,q^2;q^2)_\infty}{(-q^2z;q^2)_\infty (-qz;q^2)_\infty} \right]=\sum_{i,j\geq 0} \frac{q^{i^2+2ij+2j^2+j}}{(q;q)_i(q^2;q^2)_j}, \\
F(-q^2,-q^4,q^8)=\CT \left[\frac{(q^4z,q^2z,z^{-1},q^2;q^2)_\infty}{(-q^2z;q)_\infty} \right] =\sum_{i,j\geq 0} \frac{q^{i^2+2ij+2j^2+i+2j}}{(q;q)_i(q^2;q^2)_j}.
\end{align}
Now by \eqref{Bressoud-2} and \eqref{Bressoud-3} we obtain \eqref{(1,2,4)-10} and \eqref{(1,2,4)-11}, respectively.

Setting $(u,v,w)=(-q^3,-q^2,q^8)$, we have
\begin{align}
&F(-q^3,-q^2,q^8)= \CT\frac{(zq^4,zq^2,z^{-1},q^2;q^2)_\infty}{(-q^2 z,-q^3 z;q^2)_\infty} \nonumber \\
&=\mathrm{CT}\left[  \sum_{i\geq 0}\frac{(-zq^4)^i q^{i^2-i} }{(q^2;q^2)_i} \sum_{j\geq 0}\frac{(-q^2z)^j }{(q^2;q^2)_j}\sum_{k\geq 0}\frac{(-q^3z)^k }{(q^2;q^2)_k}  \sum_{n=-\infty}^\infty (-1)^nq^{n^2-n}z^{-n} \right] \nonumber \\
&=\sum_{i,j,k\geq0}\frac{q^{(i+j+k)^2+i^2+2i+j+2k}}{(q^2;q^2)_i(q^2;q^2)_j(q^2;q^2)_k}.
\label{1-(1,2,4)-13}
\end{align}
Now using \eqref{(2,2,2)-3} we prove \eqref{(1,2,4)-13}.

Setting $(u,v,w)=(q^{-1},-q^{-1},q^2)$, we have
\begin{align}
F(q^{-1},-q^{-1},q^2)=\CT \frac{(q z,-q z,q^2 z,z^{-1},q^2;q^2)_\infty}{(q^{-1}z,z,-q^{-1} z;q^2)_\infty}.
\end{align}
The right side is the same with the right side in \eqref{1-(1,2,2)-new-15}. Therefore, by the deductions there we obtain \eqref{(1,2,4)-16}.

Setting $(u,v,w)=(q,-q,q^6)$, we have
\begin{align}
&F(q,-q,q^6)=\CT \frac{(q^3 z,-q^3 z,z^{-1},q^2;q^2)_\infty}{(qz,-qz;q^2)_\infty}  =(q^2;q^2)_\infty \CT \left[ \frac{(z^{-1};q^2)_\infty}{1-q^{2}z^2}\right]\nonumber \\
&=(q^2;q^2)_\infty \CT \left[\sum_{i\geq 0}q^{2i}z^{2i} \sum_{k\geq 0} \frac{(-1)^kz^{-k}q^{k^2-k}}{(q^2;q^2)_k} \right] =(q^2;q^2)_\infty \sum_{i=0}^\infty \frac{q^{4i^2}}{(q^2;q^2)_{2i}}.
\end{align}
Substituting \eqref{Slater39} with $q$ replaced by $q^2$ into it, we obtain \eqref{(1,2,4)-17}.

Setting $(u,v,w)=(q^{3},-q^{3},q^{10})$, we have
\begin{align}
&F(q^{3},-q^{3},q^{10})=\CT \frac{(q^5 z,-q^5 z,q^2 z,z^{-1},q^2;q^2)_\infty}{(q^3 z,-q^3 z,q^4 z;q^2)_\infty} \nonumber \\
&=(q^2;q^2)_\infty \CT \frac{(z^{-1};q^2)_\infty(1-q^2z)}{1-q^6z^2}
=(q^2;q^2)_\infty \CT \left[-q^2z\frac{(q^{-2}z^{-1};q^2)_\infty}{1-q^6z^2} \right] \nonumber \\
&=-q^2(q^2;q^2)_\infty\CT\left[\sum_{i=0}^\infty q^{6i}z^{2i+1}\sum_{j=0}^\infty \frac{(-1)^jz^{-j}q^{j^2-3j}}{(q^2;q^2)_j}\right] \nonumber \\
&=(q^2;q^2)_\infty \sum_{i=0}^\infty \frac{q^{4i^2+4i}}{(q^2;q^2)_{2i+1}}.
\end{align}
Using \eqref{Slater38}, we prove \eqref{(1,2,4)-18}.

Setting $(u,v,w)=(a,-a,a^2)$, we have
\begin{align}
&F(a,-a,a^2)=\CT \left[\frac{1}{(aq z;q^2)_\infty}\sum_{n=-\infty}^\infty (-1)^nq^{n^2-n}z^{-n}\right]  \nonumber \\
&=\mathrm{CT}\left[\sum_{i\geq 0}\frac{(aqz)^i}{(q^2;q^2)_i}\sum_{n=-\infty}^\infty (-1)^nq^{n^2-n}z^{-n}  \right] =\sum_{n\geq 0}\frac{(-1)^na^nq^{n^2}}{(q^2;q^2)_n}=(aq;q^2)_\infty.
\end{align}
This proves \eqref{(1,2,4)-19}.

Setting $(u,v,w)=(a,-aq,a^2q^2)$, we have
\begin{align}
&F(a,-aq,a^2q^2)=\CT \left[\frac{1}{(a z;q^2)_\infty}\sum_{n=-\infty}^\infty (-1)^nq^{n^2-n}z^{-n} \right] \nonumber \\
&=\mathrm{CT}\left[\sum_{i\geq 0}\frac{(az)^i}{(q^2;q^2)_i}\sum_{n=-\infty}^\infty (-1)^nq^{n^2-n}z^{-n}  \right]=\sum_{n\geq 0}\frac{(-1)^na^nq^{n^2-n}}{(q^2;q^2)_n}=(a;q^2)_\infty.
\end{align}
This proves \eqref{(1,2,4)-20}.

Setting $(u,v,w)=(q,-q,q^{4})$, we have
\begin{align}
&F(q,-q,q^{4})=\CT \frac{(-q^2 z,z^{-1},q^2z,q^2;q^2)_\infty}{(q z,-q z;q^2)_\infty}  \nonumber \\
&=\mathrm{CT}\left[ \sum_{i\geq 0}\frac{(zq^2)^i q^{i^2-i} }{(q^2;q^2)_i} \sum_{j\geq 0}\frac{(qz)^j }{(q^2;q^2)_j}\sum_{k\geq 0}\frac{(-qz)^k }{(q^2;q^2)_k}  \sum_{n=-\infty}^\infty (-1)^nq^{n^2-n}z^{-n} \right] \nonumber \\
&=\sum_{i,j,k\geq0}\frac{(-1)^{i+j}q^{(i+j+k)^2+i^2}}{(q^2;q^2)_i(q^2;q^2)_j(q^2;q^2)_k}.
\end{align}
Using \eqref{(2,2,2)-1}, we obtain \eqref{(1,2,4)-21}.

Setting $(u,v,w)=(-q^{-1},q^{-1},q^{-2})$, we have
\begin{align}
&F(-q^{-1},q^{-1},q^{-2})=\CT \frac{(z^{-1},q^2z,q^2;q^2)_\infty}{(- z;q^2)_\infty}  \nonumber \\
&=\mathrm{CT}\left[ \sum_{i\geq 0}\frac{(-z)^i }{(q^2;q^2)_i}   \sum_{n=-\infty}^\infty (-1)^nq^{n^2-n}z^{-n} \right] =\sum_{i\geq0}\frac{q^{i^2-i}}{(q^2;q^2)_i}=(-1;q^2)_\infty.
\end{align}
This proves \eqref{(1,2,4)-22}.

Setting $(u,v,w)=(-1,q,q^{2})$, we have
\begin{align}
&F(-1,q,q^{2})=\CT \frac{(z^{-1},q^2z,q^2;q^2)_\infty}{(- z;q^2)_\infty}  \nonumber \\
&=\mathrm{CT}\left[ \sum_{i\geq 0}\frac{(-z)^i }{(q^2;q^2)_i}   \sum_{n=-\infty}^\infty (-1)^nq^{n^2-n}z^{-n} \right] =\sum_{i\geq0}\frac{q^{i^2-i}}{(q^2;q^2)_i}=(-1;q^2)_\infty.
\end{align}
This proves \eqref{(1,2,4)-23}.

Setting $(u,v,w)=(-q,q,q^4)$, we have
\begin{align}
&F(-q,q,q^4)=\CT \left[\frac{(q^2z;q^2)_\infty (z^{-1},q^2z,q^2;q^2)_\infty}{(q^2z^2;q^4)_\infty } \right]   \nonumber \\
&=\CT \sum_{i\geq 0} \frac{(-1)^iq^{2i}z^iq^{i^2-i}}{(q^2;q^2)_i} \sum_{j\geq 0} \frac{q^{2j}z^{2j}}{(q^4;q^4)_j} \sum_{n=-\infty}^\infty (-1)^nq^{n^2-n}z^{-n} \nonumber \\
&=\sum_{i,j\geq 0} \frac{q^{2i^2+4ij+4j^2}}{(q^2;q^2)_i(q^4;q^4)_j}. \nonumber
\end{align}
Using \eqref{Bressoud-1} we obtain \eqref{(1,2,4)-26}.

Setting $(u,v,w)=(-q^2,q^3,q^6)$, we have
\begin{align}
&F(-q^2,q^3,q^6)=\CT\frac{(z^{-1},q^2z,q^2;q^2)_\infty}{(- q^2z;q^2)_\infty}  \nonumber \\
&=\mathrm{CT}\left[ \sum_{i\geq 0}\frac{(-q^2z)^i }{(q^2;q^2)_i}   \sum_{n=-\infty}^\infty (-1)^nq^{n^2-n}z^{-n} \right] =\sum_{i\geq0}\frac{q^{i^2+i}}{(q^2;q^2)_i}=(-q^2;q^2)_\infty.
\end{align}
This proves \eqref{(1,2,4)-27}.

Setting $(u,v,w)=(-q^{3},q^3,q^8)$, we have
\begin{align}
&F(-q^{3},q^3,q^8)=\CT \frac{(q^4z,z^{-1},q^2z,q^2;q^2)_\infty}{(q^3z,- q^3z;q^2)_\infty}  \nonumber \\
&=\mathrm{CT}\left[ \sum_{i\geq 0}\frac{(-q^4z)^i q^{n^2-n}}{(q^2;q^2)_i} \sum_{j\geq 0}\frac{(q^3z)^j }{(q^2;q^2)_j} \sum_{k\geq 0}\frac{(-q^3z)^k }{(q^2;q^2)_k}  \sum_{n=-\infty}^\infty (-1)^nq^{n^2-n}z^{-n} \right] \nonumber \\
&=\sum_{i,j,k\geq0}\frac{(-1)^jq^{2i^2+j^2+k^2+2ij+2ik+2jk+2i+2j+2k}}{(q^2;q^2)_i(q^2;q^2)_j(q^2;q^2)_k}.  \label{1-(1,2,4)-29}
\end{align}
Now using \eqref{(2,2,2)-11} we prove \eqref{(1,2,4)-29}.
\end{proof}
\begin{rem}
Comparing \eqref{F-relation-0}--\eqref{F-relation-3}, we obtain the following relations:
\begin{align}
F(-q^2,q,-q^6)=F(-q,q,-q^6)-qF(-q^3,q^3,-q^{10}), \\
F(-q,q^3,-q^6)=F(-q,q,-q^6)+qF(-q^3,q^3,-q^{10}).
\end{align}
From the right-hand sides of \eqref{(1,2,4)-4} and \eqref{(1,2,4)-3}, we deduce that
\begin{align}
    F(-q^2,q,-q^6;q)=F(q,-q^3,-q^6;-q).
\end{align}
Comparing \eqref{(1,2,4)-16} and \eqref{(1,2,4)-18}, we deduce that
\begin{align}
    F(q^{-1},-q^{-1},q^2)=F(q^3,-q^3,q^{10}).
\end{align}
Comparing \eqref{(1,2,4)-22}, \eqref{(1,2,4)-23} and \eqref{(1,2,4)-27}, we deduce that
\begin{align}
F(-q^{-1},q^{-1},q^{-2})=F(-1,q,q^2)=2F(-q^2,q^3,q^6).
\end{align}
\end{rem}

\section{Identities involving quadruple sums}\label{sec-quadruple}
 It is difficult to exhaust all and tedious to discuss quadruple sum identities of the form \eqref{id-form}. Here we only focus on a special class of identities of index $(1,1,1,2)$ and use it as an example to illustrate the phenomenon.

Recall that Sills \cite[Eqs.\ (5.68),(5.69)]{Sills2003} discovered two identities of index $(1,1,1,2)$.  See  \eqref{03.10-3} and \eqref{intro-Sills-3}.  We find more companions of them. Note that \eqref{03.10-3} follows from \eqref{03.10-0} by setting $a=b=-1$.

\subsection{Identities of index $(1,1,1,2)$}

\begin{theorem}
We have
\begin{align}
&\sum_{i,j,k,l\geq 0}\frac{(-1)^{i+j+l}q^{i^2+j^2+k^2+6l^2+2ij+2ik+2jk+4il+4jl+4kl+k}a^{i+k+2l}b^{j+k+2l}}{(q^2;q^2)_i(q^2;q^2)_j(q^2;q^2)_k(q^4;q^4)_l}=(aq,bq;q^2)_\infty, \label{03.10-0} \\
&\sum_{i,j,k,l\geq 0}\frac{(-1)^lq^{i^2+j^2+k^2+6l^2+2ij+2ik+2jk+4il+4jl+4kl+j+k}}{(q^2;q^2)_i(q^2;q^2)_j(q^2;q^2)_k(q^4;q^4)_l}=\frac{(q^3;q^3)^2_\infty}{(q;q)_\infty(q^6;q^6)_\infty}, \label{03.10-2} \\
&\sum_{i,j,k,l\geq 0}\frac{(-1)^lq^{i^2+j^2+k^2+6l^2+2ij+2ik+2jk+4il+4jl+4kl-i-j+k}}{(q^2;q^2)_i(q^2;q^2)_j(q^2;q^2)_k(q^4;q^4)_l}=\frac{3}{(q^2,q^2;q^4)_\infty}, \label{03.10-4} \\
&\sum_{i,j,k,l\geq 0}\frac{(-1)^iq^{i^2+j^2+k^2+6l^2+2ij+2ik+2jk+4il+4jl+4kl+2i+2j+3k+6l}}{(q^2;q^2)_i(q^2;q^2)_j(q^2;q^2)_k(q^4;q^4)_l}=\frac{(q^2,q^{14},q^{16};q^{16})_\infty}{(q^2;q^2)_\infty}, \label{03.10-new} \\
&\sum_{i,j,k,l\geq 0}\frac{(-1)^iq^{i^2+j^2+k^2+6l^2+2ij+2ik+2jk+4il+4jl+4kl+k+2l}}{(q^2;q^2)_i(q^2;q^2)_j(q^2;q^2)_k(q^4;q^4)_l}=\frac{(q^6,q^{10},q^{16};q^{16})_\infty}{(q^2;q^2)_\infty}, \label{03.10-8} \\
&\sum_{i,j,k,l\geq 0}\frac{(-1)^iq^{i^2+j^2+k^2+6l^2+2ij+2ik+2jk+4il+4jl+4kl+j+2k+2l}}{(q^2;q^2)_i(q^2;q^2)_j(q^2;q^2)_k(q^4;q^4)_l} \nonumber \\
&\quad \quad \quad =\frac{(q,q^7,q^8,q^{9},q^{15};q^{16})_\infty}{(q^2,q^3,q^4,q^4,q^5,q^{11},q^{12},q^{12},q^{13},q^{14};q^{16})_\infty}, \quad \text{(\cite[Eq.\ (5.68)]{Sills2003})}\label{03.10-9} \\
&\sum_{i,j,k,l\geq 0}\frac{(-1)^iq^{i^2+j^2+k^2+6l^2+2ij+2ik+2jk+4il+4jl+4kl+2i+k+2l}}{(q^2;q^2)_i(q^2;q^2)_j(q^2;q^2)_k(q^4;q^4)_l}\nonumber \\
&\quad \quad \quad =\frac{(q^3,q^5,q^8,q^{11},q^{13};q^{16})_\infty}{(q,q^4,q^4,q^6,q^{7},q^{9},q^{10},q^{12},q^{12},q^{15};q^{16})_\infty}, \label{03.10-10} \\
&\sum_{i,j,k,l\geq 0}\frac{(-1)^iq^{i^2+j^2+k^2+6l^2+2ij+2ik+2jk+4il+4jl+4kl-j-2l}}{(q^2;q^2)_i(q^2;q^2)_j(q^2;q^2)_k(q^4;q^4)_l}=\frac{2(q^6,q^{10},q^{16};q^{16})_\infty}{(q^2;q^2)_\infty}, \label{03.10-13} \\
&\sum_{i,j,k,l\geq 0} \frac{(-1)^jq^{i^2+j^2+k^2+6l^2+2ij+2ik+2jk+4il+4jl+4kl+k}a^{i+j+2l}}{(q^2;q^2)_i(q^2;q^2)_j(q^2;q^2)_k(q^4;q^4)_l}=(-q^2;q^2)_\infty (-a^2q^4;q^8)_\infty. \label{1112-parameter-1}
\end{align}
\end{theorem}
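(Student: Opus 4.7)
The plan is to reduce the quadruple sum to a single sum by exploiting the factorization of the quadratic form and then summing one variable at a time. A direct check shows that
\begin{align*}
i^2+j^2+k^2+6l^2+2ij+2ik+2jk+4il+4jl+4kl=(i+j+k+2l)^2+2l^2,
\end{align*}
so the pair $(i,j)$ enters the exponent only through $m:=i+j$. Collecting terms with $m$ fixed, the left-hand side becomes
\begin{align*}
\sum_{k,l,m\geq 0}\frac{a^{m+2l}\,q^{m^2+2m(k+2l)+k^2+6l^2+4kl+k}}{(q^2;q^2)_m(q^2;q^2)_k(q^4;q^4)_l}\sum_{j=0}^{m}(-1)^j{m\brack j}_{q^2}.
\end{align*}
A short induction using the Pascal-type recursions for Gaussian binomials gives the two-term recurrence $T_{m+1}=(1-q^{2m})T_{m-1}$ for $T_m:=\sum_{j=0}^m(-1)^j{m\brack j}_{q^2}$; combined with $T_0=1$ and $T_1=0$ this yields $T_m=0$ for odd $m$ and $T_{2M}=(q^2;q^4)_M$. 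Since $(q^2;q^2)_{2M}=(q^2;q^4)_M(q^4;q^4)_M$, the factor $(q^2;q^4)_M$ cancels cleanly.

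Introducing the new index $n=M+l$, all $nl$ and $lk$ cross terms collapse and the $q$-exponent simplifies to $(k+2n)^2+2l^2+k$, so
\begin{align*}
\mathrm{LHS}=\sum_{n,k\geq 0}\frac{a^{2n}\,q^{(k+2n)^2+k}}{(q^2;q^2)_k(q^4;q^4)_n}\sum_{l=0}^{n}{n\brack l}_{q^4}q^{2l^2}.
\end{align*}
The inner $l$-sum equals $(-q^2;q^4)_n$ by \eqref{finite-Jacobi} with $q\mapsto q^4$ and $z=q^2$, while the $k$-sum evaluates by Euler's identity \eqref{Euler-2} (applied with $q\mapsto q^2$ and $z=q^{4n+2}$) to $q^{4n^2}(-q^{4n+2};q^2)_\infty=q^{4n^2}(-q^2;q^2)_\infty/(-q^2;q^2)_{2n}$.

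Finally, pulling out $(-q^2;q^2)_\infty$ and applying the splits $(-q^2;q^2)_{2n}=(-q^2;q^4)_n(-q^4;q^4)_n$ and $(q^4;q^4)_n(-q^4;q^4)_n=(q^8;q^8)_n$, the remaining factor $(-q^2;q^4)_n$ cancels, leaving
\begin{align*}
\mathrm{LHS}=(-q^2;q^2)_\infty\sum_{n\geq 0}\frac{a^{2n}q^{4n^2}}{(q^8;q^8)_n}=(-q^2;q^2)_\infty(-a^2q^4;q^8)_\infty
\end{align*}
after one last application of \eqref{Euler-2} (with $q\mapsto q^8$ and $z=a^2q^4$). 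The main obstacle is arithmetic bookkeeping: the quadratic form has many cross terms, so completing the square, the change of variables $n=M+l$, and the telescoping of denominators must all be tracked carefully. Conceptually, no tool beyond those already used repeatedly in the paper is needed.
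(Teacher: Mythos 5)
Your argument for the last identity, \eqref{1112-parameter-1}, is correct: the completion of the square $(i+j+k+2l)^2+2l^2$, the Gauss evaluation $\sum_{j=0}^m(-1)^j{m\brack j}_{q^2}=(q^2;q^4)_{m/2}$ (zero for odd $m$), the identification of $\sum_{l=0}^n{n\brack l}_{q^4}q^{2l^2}$ with $(-q^2;q^4)_n$ via \eqref{finite-Jacobi}, and the final cancellations all check out. For that one identity this is a genuinely different and more self-contained route than the paper's: the paper writes the sum as a constant term $\CT\,(tz,-tz,z^{-1},zq^2,q^2;q^2)_\infty/(uz,vz,wz;q^2)_\infty$ and, at the specialization $(u,v,w,t)=(-aq,aq,-q^2,\zeta_4aq^2)$, reduces \eqref{1112-parameter-1} to the previously established triple-sum identity \eqref{(1,2,2)-new-1}, whereas you need nothing beyond Euler's identity and two finite $q$-binomial evaluations.

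The genuine gap is that the theorem asserts nine identities and you have proved only one. Your reduction hinges on two features specific to \eqref{1112-parameter-1}: the variables $i,j$ enter the summand only through $m=i+j$ apart from the sign $(-1)^j$, and that sign produces the vanishing/product evaluation of $T_m$. Most of the remaining identities break one or both of these. In \eqref{03.10-0} the parameters $a$ and $b$ are attached to $i$ and $j$ separately, so the inner $(i,j)$-sum is $\sum_{i+j=m}(-a)^i(-b)^j/((q^2;q^2)_i(q^2;q^2)_j)$, which has no such product evaluation; in \eqref{03.10-2} and \eqref{03.10-9} a linear term $j$ appears in the exponent and the sign is $(-1)^l$ or $(-1)^i$, so the symmetrized $j$-sum is $\sum_j q^j{m\brack j}_{q^2}$ or similar, again not of Gauss type; in \eqref{03.10-4} the unsigned sum $\sum_j{m\brack j}_{q^2}$ arises, which has no closed product form; and the product sides of \eqref{03.10-9} and \eqref{03.10-10} are modulus-$16$ quintuple products that cannot come out of a single application of \eqref{Euler-2} but require reduction to Slater-type single sums (the paper routes these through \eqref{(1,2,4)-4} and \eqref{(1,2,4)-3}, ultimately to (S.~38) and (S.~39)). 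So the proposal, as written, establishes one ninth of the statement; to complete it you would need either to run the paper's uniform constant-term specialization of $F(u,v,w,t)$ or to supply eight separate ad hoc reductions.
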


\begin{proof}
We define
\begin{align}
&F(u,v,w,t)=F(u,v,w,t;q)\nonumber \\
&:=\sum_{i,j,k,l\geq 0} \frac{(-1)^{i+j+k+l}u^iv^jw^kt^{2l}q^{i^2+j^2+k^2+6l^2+2ij+2ik+2jk+4il+4jl+4kl-i-j-k-4l}}{(q^2;q^2)_i(q^2;q^2)_j(q^2;q^2)_k(q^4;q^4)_l}.
\end{align}
By \eqref{Euler-1}, \eqref{Euler-2} and \eqref{Jacobi} we have
\begin{align}
F(u,v,w,t)&=\CT \sum_{i\geq 0}\frac{u^i z^i}{(q^2;q^2)_i} \sum_{j\geq 0}\frac{v^j z^j }{(q^2;q^2)_j} \sum_{k\geq 0}\frac{  w^k  z^{k} }{(q^2;q^2)_k}\sum_{l\geq 0}\frac{  (-1)^lt^{2l}  z^{2l} q^{2l^2-2l}}{(q^4;q^4)_l} \nonumber \\
&\quad \quad \quad \cdot \sum_{n=-\infty}^\infty (-1)^nq^{n^2-n}z^{-n}   \nonumber \\
&=\CT \frac{(tz,-tz,z^{-1},zq^2,q^2;q^2)_\infty}{(uz,vz,wz;q^2)_\infty}.
\end{align}

When $t=-w$, we have
\begin{align}
&F(u,v,w,-w)=\CT \frac{(-wz,zq^2,z^{-1},q^2;q^2)_\infty}{(u z,v z;q^2)_\infty}   \nonumber \\
 &= \mathrm{CT}\left[  \sum_{i\geq 0}\frac{(wz)^i q^{i^2-i}}{(q^2;q^2)_i} \sum_{j\geq 0}\frac{(uz)^j }{(q^2;q^2)_j} \sum_{k\geq 0}\frac{  (vz)^k }{(q^2;q^2)_k} \sum_{n=-\infty}^\infty (-1)^nq^{n^2-n}z^{-n}   \right] \nonumber \\
 &=  \sum_{i,j,k\geq 0} \frac{(-1)^{i+j+k}q^{2i^2+j^2+k^2+2ij+2ik+2jk-2i-j-k}u^jv^kw^i}{(q^2;q^2)_i(q^2;q^2)_j(q^2;q^2)_k}. \label{quadruple-F-start}
\end{align}

By \eqref{quadruple-F-start} and \eqref{(2,2,2)-1} we have
\begin{align*}
F(aq,bq,-abq^2,abq^2)=\sum_{i,j,k\geq 0} \frac{(-1)^{j+k}q^{2i^2+j^2+k^2+2ij+2ik+2jk}a^{i+j}b^{i+k}}{(q^2;q^2)_i(q^2;q^2)_j(q^2;q^2)_k}=(aq,bq;q^2)_\infty.
\end{align*}
This proves \eqref{03.10-0}.

Setting $(u,v,w,t)=(-q,-q^2,-q^2,q^2)$, we have
\begin{align}
&F(-q,-q^2,-q^2,q^2)= \sum_{i,j,k\geq 0} \frac{q^{2i^2+j^2+k^2+2ij+2ik+2jk+k}}{(q^2;q^2)_i(q^2;q^2)_j(q^2;q^2)_k}.  \label{1-03.10-2}
\end{align}
Now using \eqref{(2,2,2)-4} we prove \eqref{03.10-2}.

Setting $(u,v,w,t)=(-1,-1,-q^2,q^2)$, we have
\begin{align}
&F(-1,-1,-q^2,q^2)= \sum_{i,j,k\geq 0} \frac{q^{2i^2+j^2+k^2+2ij+2ik+2jk-j-k}}{(q^2;q^2)_i(q^2;q^2)_j(q^2;q^2)_k}.   \label{(2,2,2)-j-k}
	\end{align}
Now using \eqref{(2,2,2)-6} we prove \eqref{03.10-4}.

Setting $(u,v,w,t)=(q^3,-q^3,-q^4,\zeta_4q^5)$, we have
\begin{align}
&F(q^3,-q^3,-q^4,\zeta_4q^5)=\CT \frac{(-q^{10}z^2;q^4)_\infty(q^2z,z^{-1},q^2;q^2)_\infty}{(q^3z,-q^3z,-q^4 z;q^2)_\infty} \nonumber \\
&= \mathrm{CT}\left[  \sum_{i\geq 0}\frac{(-1)^i q^{4i} z^i }{(q^2;q^2)_i} \sum_{j\geq 0}\frac{(q^6z^2)^j }{(q^4;q^4)_j} \sum_{k\geq 0}\frac{  (q^{10}z^2)^k q^{2k^2-2k} }{(q^4;q^4)_k} \sum_{n=-\infty}^\infty (-1)^nq^{n^2-n}z^{-n}   \right] \nonumber \\
&=\sum_{i,j,k\geq 0} \frac{q^{(i+2j+2k)^2+2k^2+3i+4j+6k}}{(q^2;q^2)_i(q^4;q^4)_j(q^4;q^4)_k}.
\end{align}
Now using \eqref{(1,2,2)-1} we prove \eqref{03.10-new}.

Setting $(u,v,w,t)=(q,-q,-q^2,\zeta_4q^3)$, we have
\begin{align}
&F(q,-q,-q^2,\zeta_4q^3)=\CT \frac{(-q^6z^2;q^4)_\infty(q^2z,z^{-1},q^2;q^2)_\infty}{(qz,-qz,-q^2 z;q^2)_\infty}.   \label{1-03.10-8}
\end{align}
This agrees with the right side of \eqref{1-(1,2,4)-2}, and therefore we prove \eqref{03.10-8}.

Setting $(u,v,w,t)=(q,-q^2,-q^3,\zeta_4q^3)$, we have
\begin{align}
&	F(q,-q^2,-q^3,\zeta_4q^3)=\CT\frac{(-q^6z^2;q^4)_\infty(zq^2,z^{-1},q^2;q^2)_\infty}{(qz,-q^2z,- q^3z;q^2)_\infty}  \nonumber \\
&= \mathrm{CT}\left[  \sum_{i\geq 0}\frac{(-1)^i q^{2i} z^i }{(q;q)_i} \sum_{j\geq 0}\frac{(qz)^j }{(q^2;q^2)_j} \sum_{k\geq 0}\frac{  (q^6z^2)^k q^{2k^2-2k} }{(q^4;q^4)_k} \sum_{n=-\infty}^\infty (-1)^nq^{n^2-n}z^{-n}   \right] \nonumber \\
&=  \sum_{i,j,k\geq 0} \frac{(-1)^jq^{i^2+j^2+6k^2+2ij+4ik+4jk+i+2k}}{(q;q)_i(q^2;q^2)_j(q^4;q^4)_k}. \label{1-03.10-9}
\end{align}
Now using \eqref{(1,2,4)-4} we prove \eqref{03.10-9}.

Setting $(u,v,w,t)=(q^3,-q,-q^2,\zeta_4q^3)$, we have
\begin{align}
&	F(q^3,-q,-q^2,\zeta_4q^3)=\CT \frac{(-q^6z^2;q^4)_\infty(zq^2,z^{-1},q^2;q^2)_\infty}{(-qz,-q^2z,q^3z;q^2)_\infty}   \nonumber \\
&= \mathrm{CT}\left[  \sum_{i\geq 0}\frac{(-1)^i q^{i} z^i }{(q;q)_i} \sum_{j\geq 0}\frac{(q^3z)^j }{(q^2;q^2)_j} \sum_{k\geq 0}\frac{  (q^6z^2)^k q^{2k^2-2k} }{(q^4;q^4)_k} \sum_{n=-\infty}^\infty (-1)^nq^{n^2-n}z^{-n}   \right] \nonumber \\
&=  \sum_{i,j,k\geq 0} \frac{(-1)^jq^{i^2+j^2+6k^2+2ij+4ik+4jk+2j+2k}}{(q;q)_i(q^2;q^2)_j(q^4;q^4)_k}. \label{1-03.10-10}
\end{align}
Now using \eqref{(1,2,4)-3} we prove \eqref{03.10-10}.

Setting $(u,v,w,t)=(q,-1,-q,\zeta_4q)$, we have
\begin{align}
&	F(q,-1,-q,\zeta_4q)=\CT \frac{(-q^2z^2;q^4)_\infty(q^2z,z^{-1},q^2;q^2)_\infty}{(qz,-z,-qz;q^2)_\infty}  \nonumber \\
&= \mathrm{CT}\left[  \sum_{i\geq 0}\frac{(-1)^i  z^i }{(q;q)_i} \sum_{j\geq 0}\frac{(qz)^j }{(q^2;q^2)_j} \sum_{k\geq 0}\frac{  (q^2z^2)^k q^{2k^2-2k} }{(q^4;q^4)_k} \sum_{n=-\infty}^\infty (-1)^nq^{n^2-n}z^{-n}   \right] \nonumber \\
&=  \sum_{i,j,k\geq 0} \frac{(-1)^jq^{i^2+j^2+6k^2+2ij+4ik+4jk-i-2k}}{(q;q)_i(q^2;q^2)_j(q^4;q^4)_k}. \label{1-03.10-13}
\end{align}
Now using \eqref{(1,2,4)-1} we prove \eqref{03.10-13}.

Setting $(u,v,w,t)=(-aq,aq,-q^2,\zeta_4aq^2)$, we have
\begin{align}
&F(-aq,aq,-q^2,\zeta_4aq^2)=\CT \frac{(-a^2q^4z^2;q^4)_\infty(q^2z,z^{-1},q^2;q^2)_\infty}{(aqz,-aqz,-q^2 z;q^2)_\infty} \nonumber \\
&= \mathrm{CT}\left[  \sum_{i\geq 0}\frac{(-1)^i q^{2i} z^i }{(q^2;q^2)_i} \sum_{j\geq 0}\frac{(a^2q^2z^2)^j }{(q^4;q^4)_j} \sum_{k\geq 0}\frac{  (a^2q^4z^2)^k q^{2k^2-2k} }{(q^4;q^4)_k} \sum_{n=-\infty}^\infty (-1)^nq^{n^2-n}z^{-n}   \right] \nonumber \\
&=\sum_{i,j,k\geq 0} \frac{q^{(i+2j+2k)^2+2k^2+i}a^{2j+2k}}{(q^2;q^2)_i(q^4;q^4)_j(q^4;q^4)_k}.
\end{align}
Now using \eqref{(1,2,2)-new-1} we prove \eqref{1112-parameter-1}.
\end{proof}

If we set $a=1$, $-q$ or $-q^{-1}$ in \eqref{1112-parameter-1}, we obtain
\begin{align}
&\sum_{i,j,k,l\geq 0}\frac{(-1)^iq^{i^2+j^2+k^2+6l^2+2ij+2ik+2jk+4il+4jl+4kl+k}}{(q^2;q^2)_i(q^2;q^2)_j(q^2;q^2)_k(q^4;q^4)_l}=\frac{(q^8,q^8,q^{16};q^{16})_\infty}{(q^2;q^2)_\infty}, \label{03.10-7} \\
&\sum_{i,j,k,l\geq 0}\frac{(-1)^iq^{i^2+j^2+k^2+6l^2+2ij+2ik+2jk+4il+4jl+4kl+i+j+k+2l}}{(q^2;q^2)_i(q^2;q^2)_j(q^2;q^2)_k(q^4;q^4)_l} \nonumber \\
&\quad \quad =\frac{(q^{12};q^{16})_\infty}{(q^2,q^6,q^6,q^{10},q^{14},q^{14};q^{16})_\infty}, \label{03.10-11} \\
&\sum_{i,j,k,l\geq 0}\frac{(-1)^iq^{i^2+j^2+k^2+6l^2+2ij+2ik+2jk+4il+4jl+4kl-i-j+k-2l}}{(q^2;q^2)_i(q^2;q^2)_j(q^2;q^2)_k(q^4;q^4)_l} \nonumber \\
&\quad \quad =\frac{(q^{4};q^{16})_\infty}{(q^2,q^2,q^6,q^{10},q^{10},q^{14};q^{16})_\infty}. \label{03.10-12}
\end{align}

\section{Concluding remarks}\label{sec-remarks}

\subsection{Identities with non-modular product side}

If we allow the product side of the identity \eqref{id-form} to be non-modular, then we can find many more such identities. For instance, through a search using Maple we find the following identities in addition to Theorem \ref{thm-7}:
\begin{align}
\sum_{i,j,k\geq 0} \frac{(-1)^iq^{i^2+2j^2+k^2+2ij+2ik+2jk+3i}}{(q^2;q^2)_i(q^2;q^2)_j(q^2;q^2)_k}&=\frac{(q^6;q^4)_\infty}{(q;q^4)_\infty(q^7;q^4)_\infty}, \label{(2,2,2)-12} \\
\sum_{i,j,k\geq 0} \frac{q^{i^2+2j^2+k^2+2ij+2ik+2jk+3j+k}}{(q^2;q^2)_i(q^2;q^2)_j(q^2;q^2)_k}&=\frac{1-q^3}{{(1-q^4)(q;q^2)_\infty}}, \label{non-(2,2,2)-7}\\
\frac{(-1)^iq^{i^2+2j^2+k^2+2ij+2ik+2jk+2i-j+k}}{(q^2;q^2)_i(q^2;q^2)_j(q^2;q^2)_k}&=\frac{(1-q^2)(1-q^3)}{{(1-q)(1-q^4)(q^2;q^4)_\infty}}, \label{(2,2,2)-9} \\
\sum_{i,j,k\geq 0} \frac{(-1)^iq^{i^2+2j^2+k^2+2ij+2ik+2jk+3i+3k}}{(q^2;q^2)_i(q^2;q^2)_j(q^2;q^2)_k}&=\frac{1}{(1-q^2)(1-q^8)(q^{10};q^4)_\infty}, \label{(2,2,2)-13} \\
\sum_{i,j,k\geq 0} \frac{(-1)^jq^{i^2+2j^2+k^2+2ij+2ik+2jk+i-j+2k}}{(q^2;q^2)_i(q^2;q^2)_j(q^2;q^2)_k}&=\frac{1-q}{(1-q^2)(1-q^3)(1-q^4)(q^{10};q^4)_\infty}. \label{(2,2,2)-14}
\end{align}
In addition to Theorem \ref{thm-(1,1,3)}, we also find the following identities:
\begin{align}
\sum_{i,j,k\geq 0}\frac{(-1)^kq^{2i^2+2j^2+9k^2+2ij+6ik+6jk+4j+3k}}{(q^2;q^2)_i(q^2;q^2)_j(q^6;q^6)_k}&=\frac{1}{(1-q^6)(q^2,q^{10};q^6)_\infty}, \label{(1,1,3)-1} \\
\sum_{i,j,k\geq 0}\frac{(-1)^kq^{2i^2+2j^2+9k^2+2ij+6ik+6jk+8j+9k}}{(q^2;q^2)_i(q^2;q^2)_j(q^6;q^6)_k}&=\frac{1}{(1-q^{12})(q^2,q^{10};q^6)_\infty}, \label{(1,1,3)-2} \\
\sum_{i,j,k\geq 0}\frac{(-1)^kq^{2i^2+2j^2+9k^2+2ij+6ik+6jk+i+6j+6k}}{(q^2;q^2)_i(q^2;q^2)_j(q^6;q^6)_k}&=\frac{1+q^3}{(1-q^{9})(q^5,q^{7};q^6)_\infty}, \label{(1,1,3)-3} \\
\sum_{i,j,k\geq 0}\frac{(-1)^kq^{2i^2+2j^2+9k^2+2ij+6ik+6jk+2i+8j+9k}}{(q^2;q^2)_i(q^2;q^2)_j(q^6;q^6)_k}&=\frac{1}{(1-q^{6})(1-q^{12})(q^4,q^{14};q^6)_\infty}, \label{(1,1,3)-4} \\
 \sum_{i,j,k\geq 0}\frac{(-1)^kq^{2i^2+2j^2+9k^2+2ij+6ik+6jk+4i+3k}}{(q^2;q^2)_i(q^2;q^2)_j(q^6;q^6)_k}&=\frac{1}{(1-q^{6})(q^2,q^{10};q^6)_\infty}, \label{(1,1,3)-5} \\
\sum_{i,j,k\geq 0}\frac{(-1)^kq^{2i^2+2j^2+9k^2+2ij+6ik+6jk+2i-j}}{(q^2;q^2)_i(q^2;q^2)_j(q^6;q^6)_k}&=\frac{1-q^2}{(1-q^{3})(q,q^{5};q^6)_\infty}.\label{(1,1,3)-7}
\end{align}
We also find the following identities in addition to Theorem \ref{thm-6}:
\begin{align}
\sum_{i,j,k\geq 0}\frac{(-1)^jq^{i^2+6j^2+4k^2+4ij+4ik+8jk+i-3j+3k}}{(q^2;q^2)_i(q^4;q^4)_j(q^4;q^4)_k}&= \frac{(1-q^3)(1-q^{10})}{(1-q^5)(1-q^8)(q^2;q^{4})_\infty}, \label{(1,2,2)-new-7}\\
 \sum_{i,j,k\geq 0}\frac{(-1)^jq^{i^2+6j^2+4k^2+4ij+4ik+8jk+i+6k}}{(q^2;q^2)_i(q^4;q^4)_j(q^4;q^4)_k}&= \frac{1-q^6}{(1-q^8)(q^2;q^{4})_\infty}, \label{(1,2,2)-new-9} \\
 \sum_{i,j,k\geq 0}\frac{(-1)^{i+j}q^{i^2+6j^2+4k^2+4ij+4ik+8jk+2i-2j-3k}}{(q^2;q^2)_i(q^4;q^4)_j(q^4;q^4)_k}&= {(1+q)(q^3;q^4)_\infty}, \label{(1,2,2)-new-20}\\
\sum_{i,j,k\geq 0}\frac{(-1)^{i+j}q^{i^2+6j^2+4k^2+4ij+4ik+8jk+3i-4k}}{(q^2;q^2)_i(q^4;q^4)_j(q^4;q^4)_k}&= 2(q^6;q^4)_\infty, \label{(1,2,2)-new-23}\\
 \sum_{i,j,k\geq 0}\frac{(-1)^{i+j}q^{i^2+6j^2+4k^2+4ij+4ik+8jk+4i+2j-3k}}{(q^2;q^2)_i(q^4;q^4)_j(q^4;q^4)_k}&= {(1+q)(q^7;q^4)_\infty}, \label{(1,2,2)-new-24}\\
\sum_{i,j,k\geq 0}\frac{(-1)^{i+j}q^{i^2+6j^2+4k^2+4ij+4ik+8jk+4i+2j-k}}{(q^2;q^2)_i(q^4;q^4)_j(q^4;q^4)_k}&= {(1+q^3)(q^5;q^4)_\infty}, \label{(1,2,2)-new-25}\\
 \sum_{i,j,k\geq 0}\frac{(-1)^{i+j}q^{i^2+6j^2+4k^2+4ij+4ik+8jk+5i+4j-2k}}{(q^2;q^2)_i(q^4;q^4)_j(q^4;q^4)_k}&= \frac{{(q^4;q^4)_\infty}}{1-q^2}, \label{(1,2,2)-new-28}\\
\sum_{i,j,k\geq 0}\frac{(-1)^{i+j}q^{i^2+6j^2+4k^2+4ij+4ik+8jk+5i+4j}}{(q^2;q^2)_i(q^4;q^4)_j(q^4;q^4)_k}&= {(1+q^4)(q^6;q^4)_\infty}, \label{(1,2,2)-new-29}\\
 \sum_{i,j,k\geq 0}\frac{(-1)^{i+j}q^{i^2+6j^2+4k^2+4ij+4ik+8jk+6i+6j-k}}{(q^2;q^2)_i(q^4;q^4)_j(q^4;q^4)_k}&= {(1+q^3)(q^9;q^4)_\infty}, \label{(1,2,2)-new-32}\\
\sum_{i,j,k\geq 0}\frac{(-1)^{i+j}q^{i^2+6j^2+4k^2+4ij+4ik+8jk+6i+6j+k}}{(q^2;q^2)_i(q^4;q^4)_j(q^4;q^4)_k}&= {(1+q^5)(q^7;q^4)_\infty}, \label{(1,2,2)-new-33}\\
\sum_{i,j,k\geq 0}\frac{(-1)^{i+j}q^{i^2+6j^2+4k^2+4ij+4ik+8jk+7i+8j}}{(q^2;q^2)_i(q^4;q^4)_j(q^4;q^4)_k}&= {(1+q^4)(q^{10};q^4)_\infty}, \label{(1,2,2)-new-35}\\
\sum_{i,j,k\geq 0}\frac{(-1)^{i+j}q^{i^2+6j^2+4k^2+4ij+4ik+8jk+7i+8j+2k}}{(q^2;q^2)_i(q^4;q^4)_j(q^4;q^4)_k}&= {(1+q^6)(q^{8};q^4)_\infty}. \label{(1,2,2)-new-36}
\end{align}

In addition to Theorem \ref{thm-9}, we also find the following identity:
\begin{align}
\sum_{i,j,k\geq 0}\frac{(-1)^iq^{i^2+2j^2+k^2+2ij-2ik-2jk+i+3j}}{(q;q)_i(q^2;q^2)_j(q^2;q^2)_k}&={(q^2;q)_\infty}{(q^2;q^4)_\infty}. \label{(1,2,2)--9} 
\end{align}
The above identities can be proved in a way similar to the theorems we mentioned. To save space, here we only give the proof for \eqref{(2,2,2)-12} and \eqref{(1,2,2)--9}. Other identities can be proved similarly.
\begin{proof}[Proof of \eqref{(2,2,2)-12}]
Recall the function $F(u,v,w)$ defined in \eqref{111-F-defn}. Setting $(u,v,w)=(-q^3,1,1)$,  by \eqref{111-F-start} we have
\begin{align*}
&F(-q^3,1,1)={(q^4;q^2)_\infty}\sum_{n=0}^\infty\frac{q^{n^2}(-q;q^2)_n}{(q^2;q^2)_n(q^4;q^2)_n} \nonumber \\
&={(-q;q^2)_\infty}\sum_{n=0}^\infty\frac{(-1)^nq^{n^2+3n}(q^{-2};q^2)_n}{(-q;q^2)_n(q^2;q^2)_n}  \quad  \text{(by \eqref{(Laughlin-(6.1.3))})}       \nonumber \\
&={(-q;q^2)_\infty}\Big(1+\frac{(1-q^{-2})(-q^4)}{(1+q)(1-q^2)}\Big)=\frac{(q^6;q^4)_\infty}{(q;q^4)_\infty(q^7;q^4)_\infty}. \nonumber
\end{align*}
This proves \eqref{(2,2,2)-12}.
\end{proof}

\begin{proof}[Proof of \eqref{(1,2,2)--9}]
Recall the function $F(u,v,w)$ defined in \eqref{F-defn-122-cor}.
Setting $(u,v,w)=(q^2,q^5,-q^{-1})$, by \eqref{F-const} we have
\begin{align*}
&F(q^2,q^5,-q^{-1})=\mathrm{CT}\left[ \frac{(q^5z,z^{-1},q^2;q^2)_\infty}{(q^3z,-q^{-1}z^{-1};q^2)_\infty} \right] \nonumber \\
&=(q^2;q^2)_\infty\mathrm{CT}\left[      \sum_{i\geq 0}{q^{3i} z^i } \sum_{n=0}^\infty \frac{(-q;q^2)_n(-q^{-1}z^{-1})^n}{(q^2;q^2)_n}    \right] \nonumber \\
&=(q^2;q^2)_\infty \sum_{n=0}^\infty\frac{(-q;q^2)_n(-q^2)^n}{(q^2;q^2)_n}=(q^2;q)_\infty(q^2;q^4)_\infty. 
\end{align*}
This proves \eqref{(1,2,2)--9}.
\end{proof}

\subsection{Alternative proofs}
As we have already seen, some of the identities can be  proved via different methods. Here we introduce one more method for proving the identities in this paper. This is the so-called integral method, which has been used in recent works such as \cite{Rosengren,Cao-Wang,Laughlin,Wei,Wang-rank2,Wang-rank3}. Let us first explain how the integral method works. When the function $f(z)=\sum_{n=-\infty}^\infty a(n)z^n$ can be integrated along some positively oriented simple closed curve $K$ around the origin, then we have
\begin{align}\label{int}
\oint_K f(z)\frac{dz}{2\pi iz}=a(0).
\end{align}
We will use an identity from the book of Gasper and Rahman \cite{GR-book}.
\begin{lemma}\label{lem-integral}
(Cf.\ \cite[Eq.\ (4.10.6)]{GR-book})
Suppose that
$$P(z):=\frac{(a_1z,\dots,a_Az,b_1/z,\dots,b_B/z;q)_\infty}{(c_1z,\dots,c_Cz,d_1/z,\dots,d_D/z;q)_\infty}$$
has only simple poles. We have
\begin{align}\label{eq-integral}
\oint P(z)\frac{dz}{2\pi iz}=& \frac{(b_1c_1,\dots,b_Bc_1,a_1/c_1,\dots,a_A/c_1;q)_\infty }{(q,d_1c_1,\dots,d_Dc_1,c_2/c_1,\dots,c_C/c_1;q)_\infty} \nonumber \\
& \times \sum_{n=0}^\infty \frac{(d_1c_1,\dots,d_Dc_1,qc_1/a_1,\dots,qc_1/a_A;q)_n}{(q,b_1c_1,\dots,b_Bc_1,qc_1/c_2,\dots,qc_1/c_C;q)_n} \nonumber \\
&\times \Big(-c_1q^{(n+1)/2}\Big)^{n(C-A)}\Big(\frac{a_1\cdots a_A}{c_1\cdots c_C} \Big)^n +\text{idem} ~(c_1;c_2,\dots,c_C)
\end{align}
when $C>A$, or if $C=A$ and
\begin{align}\label{cond}
\left|\frac{a_1\cdots a_A }{c_1\cdots c_C}\right|<1.
\end{align}
Here the integration is over a positively oriented contour so that
\begin{itemize}
    \item the poles of $(c_1z,\dots,c_Cz;q)_\infty^{-1}$ lie outside the contour;
    \item and the origin and poles of $(d_1/z,\dots,d_D/z;q)_\infty^{-1}$ lie inside the contour.
\end{itemize}
The symbol ``idem $(c_1;c_2,\dots,c_C)$'' after an expression stands for the sum of the $(C-1)$ expressions obtained from the preceding expression by interchanging $c_1$ with each $c_k$, $k=2,3,\dots,C$.
\end{lemma}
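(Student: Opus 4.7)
The plan is to evaluate the contour integral by the residue theorem, harvesting residues at the ``outside'' pole sequences produced by the factors $1/(c_kz;q)_\infty$, and showing that the decay/modulus hypothesis is exactly what is needed to transfer the integral into such a sum. First I would verify that under the stated contour the only singularities of the integrand $P(z)/z$ that need to be tracked are simple, arising from the zeros of $(c_kz;q)_\infty$ at $z=q^{-n}/c_k$ (for $k=1,\dots,C$ and $n\geq 0$), since the other poles (from $1/(d_j/z;q)_\infty$ and the $1/z$ factor) lie inside the contour and are automatically handled by the residue theorem applied on the Riemann sphere. The condition $C>A$, or $C=A$ with $|a_1\cdots a_A/(c_1\cdots c_C)|<1$, is precisely what forces $P(z)/z$ to decay uniformly along enlarging contours $|z|=R$ with $R\to\infty$, so that the integral equals the negative of the sum of all residues lying outside the initial contour, and that this sum converges.

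Next, I would compute a single residue. Using the factorization $(cz;q)_\infty=(1-czq^n)\prod_{j\neq n}(1-czq^j)$ and the elementary identities
\begin{align*}
\prod_{k=1}^{n}(1-q^{-k})=\frac{(-1)^n q^{-n(n+1)/2}(q;q)_n}{1},\qquad (aq^{-n};q)_\infty = (-a)^n q^{-\binom{n+1}{2}}(q/a;q)_n\,(a;q)_\infty,
\end{align*}
one obtains the key formula
\begin{align*}
\mathop{\mathrm{Res}}\limits_{z=q^{-n}/c}\frac{1}{(cz;q)_\infty}=\frac{(-1)^{n+1}q^{\binom{n}{2}}}{c\,(q;q)_\infty\,(q;q)_n}.
\end{align*}
Applying this to $P(z)/z$ at $z=q^{-n}/c_1$, one evaluates every other $q$-shifted factorial at $z=q^{-n}/c_1$, using the above shift identity repeatedly to split each $(a_iq^{-n}/c_1;q)_\infty$ or $(b_iq^nc_1;q)_\infty$ into an $(a_i/c_1;q)_\infty$ (or $(b_ic_1;q)_\infty$) part that pulls out of the sum, together with a $q$-Pochhammer factor of length $n$ and an explicit monomial in $n$.

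Then I would collect the surviving $n$-dependent data: the $(q;q)_n$ in the denominator, factors of the form $(qc_1/a_i;q)_n$, $(d_jc_1;q)_n$ in the numerator (and analogous reciprocals for the $b_j$ and remaining $c_\ell$), together with a clean power of $q$ and a clean power of the ratio $a_1\cdots a_A/(c_1\cdots c_C)$. The exponent bookkeeping is delicate but mechanical; the various $q^{-\binom{n+1}{2}}$, $q^{\binom{n}{2}}$, and $q^n$ contributions combine into exactly the factor $\bigl(-c_1q^{(n+1)/2}\bigr)^{n(C-A)}$ displayed in \eqref{eq-integral}. The $n$-independent prefactor then assembles into $(b_1c_1,\dots,b_Bc_1,a_1/c_1,\dots,a_A/c_1;q)_\infty$ divided by $(q,d_1c_1,\dots,d_Dc_1,c_2/c_1,\dots,c_C/c_1;q)_\infty$. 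Replacing $c_1$ by $c_k$ and repeating verbatim gives the ``idem'' terms.

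The main obstacle will be the sign and $q$-exponent bookkeeping in Step 2--Step 3: every shift of a $q$-Pochhammer factor contributes its own sign, power of $q$, and power of the parameter, and showing that everything combines into the very compact form on the right-hand side of \eqref{eq-integral} is where virtually all the work lies. A secondary technical point is the justification of the tail estimate that turns the contour at infinity into the series of residues; one argues using the bound $|(xz;q)_\infty/(yz;q)_\infty|=O(|z|^{\,0})$ along circles $|z|=|q|^{-N-1/2}/|c_k|$ avoiding the poles, together with the hypothesis on $|a_1\cdots a_A/(c_1\cdots c_C)|$ (or $C>A$) to conclude that these large-circle integrals tend to zero as $N\to\infty$, which is what legitimizes the residue expansion.
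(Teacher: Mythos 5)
The paper does not prove this lemma at all: it is quoted verbatim from Gasper and Rahman \cite[Eq.\ (4.10.6)]{GR-book}, so there is no internal proof to compare against. Your outline --- expanding the integral as the sum of residues at the poles $z=q^{-n}/c_k$ outside the contour, with the residue formula $\mathop{\mathrm{Res}}_{z=q^{-n}/c}(cz;q)_\infty^{-1}=(-1)^{n+1}q^{\binom{n}{2}}/\bigl(c\,(q;q)_n(q;q)_\infty\bigr)$ and the condition $C>A$ (or $C=A$ with \eqref{cond}) guaranteeing the vanishing of the large-circle integrals --- is exactly the standard derivation given in that source, and your stated auxiliary identities check out, so the plan is sound.
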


Here we use two identities in Theorem \ref{thm-8} to illustrate this method.
\begin{proof}[Second proof of \eqref{(1,2,4)-16} and \eqref{(1,2,4)-18}]
Recall the function $F(u,v,w)$ defined in \eqref{Thm124-F-start}.
Setting $(u,v,w)=(q^{-1},-q^{-1},q^2)$,  by \eqref{F-start} we have
\begin{align}
&F(q^{-1},-q^{-1},q^2)=\oint \frac{(qz,-q z,q^2 z,z^{-1},q^2;q^2)_\infty}{(q^{-1}z,z,-q^{-1} z;q^2)_\infty} \frac{\mathrm{d}z} {2\pi iz} \nonumber \\
&=(q^2;q^2)_\infty\Big(F_1(q)+F_2(q)+F_3(q)\Big), \label{1-(1,2,4)-16}
\end{align}
where (by Lemma \ref{lem-integral})
 \begin{align}
 F_1(q)=-\frac{1}{2}q^{-1}(q;q^2)_\infty, \quad F_2(q)=0, \quad F_3(q)=\frac{1}{2}q^{-1}(-q;q^2)_\infty. \label{4-(1,2,4)-16}
 \end{align}
 Substituting  \eqref{4-(1,2,4)-16} into \eqref{1-(1,2,4)-16}, we prove \eqref{(1,2,4)-16}.

 In the same way we can prove \eqref{(1,2,4)-18}.
\end{proof}

\subsection{Byproducts and a conjectural identity}
The common feature of our proofs is to reduce the desired multi-sum identities to some known single-sum identities. Interestingly, we may also obtain some new single-sum identities from the results we proved. For instance, we deduce the following result from \eqref{(1,2,2)--3}.
\begin{corollary}\label{cor-mod12}
We have
\begin{align}\label{eq-cor-mod12}
\sum_{n=0}^\infty \frac{(-q;q^2)_n q^{2n}}{(q;q^2)_{n+1}}=\frac{(q^6,q^{10},q^{16};q^{16})_\infty (q^4,q^{28};q^{32})_\infty}{(q;q)_\infty}.
\end{align}
\end{corollary}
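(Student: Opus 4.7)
The plan is to manipulate the single sum on the left directly into a form recognizable via Slater's identity \eqref{Slater38} with $q$ replaced by $q^2$, rather than attempting a literal reduction through the triple-sum identity \eqref{(1,2,2)--3}. The argument proceeds in three stages.

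\emph{Stage 1.} I would use the elementary rewrite $1/(q;q^2)_{n+1} = (q^{2n+3};q^2)_\infty/(q;q^2)_\infty$ to pull out a global factor:
\begin{align*}
\sum_{n=0}^\infty\frac{(-q;q^2)_n q^{2n}}{(q;q^2)_{n+1}} = \frac{1}{(q;q^2)_\infty}\sum_{n=0}^\infty (-q;q^2)_n\,(q^{2n+3};q^2)_\infty\, q^{2n}.
\end{align*}
Then I would expand $(q^{2n+3};q^2)_\infty$ via \eqref{Euler-2} and expand $\sum_{n\geq 0}(-q;q^2)_n z^n$ by combining \eqref{finite-Jacobi} with the standard identity $\sum_{n\geq j}{n\brack j}_{q^2} z^n = z^j/(z;q^2)_{j+1}$ (which follows from \eqref{Euler-1}), obtaining
\begin{align*}
\sum_{n\geq 0}(-q;q^2)_n z^n = \sum_{j\geq 0}\frac{q^{j^2} z^j}{(z;q^2)_{j+1}}.
\end{align*}

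\emph{Stage 2.} Setting $z = q^{2k+2}$ and exchanging summation order, combined with the telescoping identity $(q^2;q^2)_k(q^{2k+2};q^2)_{j+1} = (q^2;q^2)_{k+j+1}$, converts the single sum into the double sum
\begin{align*}
(q;q^2)_\infty\sum_{n=0}^\infty\frac{(-q;q^2)_n q^{2n}}{(q;q^2)_{n+1}} = \sum_{k,j\geq 0}\frac{(-1)^k q^{k^2+2k+j^2+(2k+2)j}}{(q^2;q^2)_{k+j+1}}.
\end{align*}

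\emph{Stage 3.} Group terms by $n=k+j$. The pivotal observation is that the exponent $k^2+2k+(n-k)^2+(2k+2)(n-k)$ simplifies completely to $n^2+2n$, \emph{independently of $k$}, so the inner sum becomes $\sum_{k=0}^n(-1)^k$---which vanishes for odd $n$ and equals $1$ for $n=2m$. Hence
\begin{align*}
(q;q^2)_\infty\sum_{n=0}^\infty\frac{(-q;q^2)_n q^{2n}}{(q;q^2)_{n+1}} = \sum_{m\geq 0}\frac{q^{4m^2+4m}}{(q^2;q^2)_{2m+1}}.
\end{align*}
Applying \eqref{Slater38} with $q$ replaced by $q^2$ evaluates the right-hand side as $(q^6,q^{10},q^{16};q^{16})_\infty(q^4,q^{28};q^{32})_\infty/(q^2;q^2)_\infty$, and dividing by $(q;q^2)_\infty$ together with $(q;q)_\infty=(q;q^2)_\infty(q^2;q^2)_\infty$ yields the claim.

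The main obstacle is spotting the quadratic-form collapse in Stage 3: \emph{a priori} the exponent is bilinear in $(k,n)$, and the fact that it reduces to a function of $n$ alone---so that the inner alternating sum wipes out all odd-$n$ contributions---is the crucial arithmetic coincidence driving the identity. Once this is noticed, the remaining steps are routine $q$-series bookkeeping.
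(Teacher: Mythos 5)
Your proof is correct, but it takes a genuinely different route from the paper's. The paper obtains \eqref{eq-cor-mod12} as a byproduct of the triple-sum identity \eqref{(1,2,2)--3}: it evaluates $F(-q,q^3,-1)$ by the integral method (Lemma \ref{lem-integral}), which expresses that constant term as $(q^2;q^2)_\infty(F_1(q)+F_2(q))$ with $F_1$ an explicit product and $F_2$ containing the sum in question; substituting the already-proved \eqref{(1,2,2)--3} then solves for $F_2$, and a final appeal to the algorithmic theta-function method of \cite{Frye-Garvan} is needed to match the resulting product with the stated one. You instead work on the single sum directly: writing $(-q;q^2)_n=\sum_j{n\brack j}_{q^2}q^{j^2}$ via \eqref{finite-Jacobi}, expanding $(q^{2n+3};q^2)_\infty$ by \eqref{Euler-2}, and telescoping the denominators produces a double sum whose exponent collapses to $n^2+2n$ along the antidiagonal $k+j=n$, so the alternating inner sum kills the odd-$n$ terms and leaves exactly the sum side of \eqref{Slater38} with $q\mapsto q^2$ (I checked the exponent computation and the rearrangements, which are justified by absolute convergence for $|q|<1$). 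Your argument is more self-contained -- no residue computation and no computer-assisted theta-function verification -- and it has a useful consequence the paper explicitly asks for: since your proof of \eqref{eq-cor-mod12} is independent of \eqref{(1,2,2)--3}, running the paper's integral computation in reverse now yields a new proof of \eqref{(1,2,2)--3} itself, in the spirit of the remark following \eqref{eq-MSZ-mod12}.
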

\begin{proof}
Recall the function $F(u,v,w)$ defined in \eqref{F-defn-122-cor}. Applying \eqref{F-const} and Lemma \ref{lem-integral} we have
\begin{align}
&F(-q,q^3,-1)=\oint \left[ \frac{(q^3 z,q^2 z,z^{-1},q^2;q^2)_\infty}{(-qz,-q^2z,-z^{-1};q^2)_\infty} \right] \frac{dz}{2\pi iz}
=(q^2;q^2)_\infty\Big(F_1(q)+F_2(q)\Big), \label{1-(1,2,2)--3}
\end{align}
where
\begin{align}
&F_1(q)=\frac{(-q,-q,-q^2;q^2)_\infty}{(q,q,q^2;q^2)_\infty}\sum_{n=0}^\infty\frac{(-1;q^2)_n q^{2n}}{(q^2;q^2)_n} \nonumber \\
&= \frac{(-q,-q,-q^2,-q^2;q^2)_\infty}{(q,q,q^2,q^2;q^2)_\infty} \quad (\text{by \eqref{eq-qbinomial}}) \label{2-(1,2,2)--3}, \\
&F_2(q)=\frac{(-1,-q,-q^2;q^2)_\infty}{(q^{-1},q^2,q^2;q^2)_\infty}\sum_{n=0}^\infty\frac{(-q;q^2)_n q^{2n}}{(q^3;q^2)_n}.\label{3-(1,2,2)--3}
\end{align}
Now substituting \eqref{(1,2,2)--3} and \eqref{2-(1,2,2)--3} into \eqref{1-(1,2,2)--3}, we get an expression for $F_2(q)$ in terms of infinite products. This expression can be shown to be equivalent to the desired representation via the method in \cite{Frye-Garvan}.
\end{proof}
The right side of \eqref{eq-cor-mod12} is the same with the following identity:
\begin{align}\label{eq-MSZ-mod12}
\sum_{n=0}^\infty \frac{q^{n(n+1)}(-q;q^2)_n}{(q;q)_{2n+1}}=\frac{(q^6,q^{10},q^{16};q^{16})_\infty (q^4,q^{28};q^{32})_\infty}{(q;q)_\infty}.
\end{align}
This appears as \cite[Eq.\ (2.32.2)]{MSZ2008}. If one can show directly that the sum sides of \eqref{eq-cor-mod12} and \eqref{eq-MSZ-mod12} are equivalent, then it provides a new proof for \eqref{(1,2,2)--3}.

\begin{corollary}
We have
\begin{align}
&\sum_{n=0}^\infty \frac{(-1;q^4)_n(-\zeta_4 q)^n}{(-1,q^2;q^2)_n}=\frac{(q^6,q^{10},q^{16};q^{16})_\infty}{(-\zeta_4q;q^2)_\infty (q^4;q^4)_\infty}, \label{cor-id-1} \\
&\sum_{n=0}^\infty \frac{(\zeta_4q;q)_{2n}(\zeta_4q^2)^n}{(q;q)_{2n+1}}=\frac{(q^2,q^{14},q^{16};q^{16})_\infty}{(q;q)_\infty (\zeta_4q;q^2)_\infty}, \label{cor-id-2} \\
&\sum_{n=0}^\infty \frac{(\zeta_4;q)_{2n}(\zeta_4q^2)^n}{(q;q)_{2n}}=\frac{(-q^4;q^8)_\infty}{(q,\zeta_4q^2;q^2)_\infty (-q,q^3,q^4,q^5,-q^7;q^8)_\infty}, \label{cor-id-3} \\
&\sum_{n=0}^\infty \frac{(-\zeta_4;q)_{2n}(-\zeta_4q^2)^n}{(q;q)_{2n}}=\frac{(-q^4;q^8)_\infty}{(q,-\zeta_4q^2;q^2)_\infty (-q,q^3,q^4,q^5,-q^7;q^8)_\infty}.  \label{cor-id-4}
\end{align}
\end{corollary}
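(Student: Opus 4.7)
The plan is to derive each of the four identities as a byproduct of the multi-sum results in Sections \ref{sec-triple} and \ref{sec-quadruple}, using the integral method together with Lemma \ref{lem-integral} in exactly the same template as the proof of Corollary \ref{cor-mod12}. For each single-sum identity, one identifies an already-proven multi-sum identity whose product side shares the same modular factor, writes the left-hand side of that multi-sum identity as a contour integral of some $F(u,v,w)$, applies Lemma \ref{lem-integral} to decompose the integral into a finite sum of residues, evaluates all but one residue in closed form via the $q$-binomial theorem \eqref{eq-qbinomial} and Euler's identities, and then solves for the unknown residue, which is exactly the desired single sum.

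Matching the product sides identifies the parents. For identity \eqref{cor-id-1}, the parent is the triple sum \eqref{(1,2,4)-2} (or equivalently the quadruple sum \eqref{03.10-8}), signaled by the factor $(q^6,q^{10},q^{16};q^{16})_\infty$. For identity \eqref{cor-id-2}, the parent is \eqref{(1,2,4)-5}, whose numerator $(q^2,q^{14},q^{16};q^{16})_\infty$ matches. For identities \eqref{cor-id-3} and \eqref{cor-id-4}, which are complex conjugates under $\zeta_4\leftrightarrow -\zeta_4$, the parents are the companion pair \eqref{(1,2,4)-4} and \eqref{(1,2,4)-3} respectively, whose right-hand sides share $(-q^4;q^8)_\infty/(-q,q^3,q^4,q^5,-q^7;q^8)_\infty$. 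In each case, I would take the integral representation from \eqref{F-start} in the proof of Theorem \ref{thm-8}, namely
\[
F(u,v,w)=\oint \frac{(w^{1/2}z,-w^{1/2}z,z^{-1},zq^{2},q^{2};q^{2})_\infty}{(uz,uqz,vz;q^{2})_\infty}\,\frac{dz}{2\pi iz},
\]
and choose $(u,v,w)$ as in the proof of the corresponding parent identity. Applying Lemma \ref{lem-integral} splits the integral as a sum of the three residues coming from the geometric progressions $z=1/u, \,1/(uq),\,1/v$ (each scaled by $q^{-2k}$); two of these residues are ${}_{2}\phi_{1}$-type sums that telescope via \eqref{eq-Gauss} or \eqref{eq-qbinomial} into explicit infinite products containing $\zeta_4$ factors, and the remaining residue is precisely the desired single sum on the left of \eqref{cor-id-1}--\eqref{cor-id-4} (up to an index shift).

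The main obstacle will be bookkeeping: choosing the right integration contour when the specialized $u,v$ have modulus close to $1$ (which does happen for $\zeta_4$-specializations), ensuring that the convergence condition \eqref{cond} of Lemma \ref{lem-integral} is met, and matching the resulting theta products against the stated right-hand sides via standard product manipulations (as was needed for Corollary \ref{cor-mod12} and in \cite{Frye-Garvan}). A secondary subtlety is that the closed-form residues naturally split the factor $(q^2;q^4)_\infty=(\zeta_4 q;q^2)_\infty(-\zeta_4 q;q^2)_\infty$ asymmetrically across the residues, so one must keep careful track of which $\zeta_4$-factors end up in the isolated residue; this is exactly the mechanism that produces the $(\zeta_4 q^2;q^2)_\infty^{-1}$ or $(-\zeta_4 q;q^2)_\infty^{-1}$ pieces on the right of the four identities. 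Once the residue bookkeeping is organized, all four identities follow in parallel from the same template.
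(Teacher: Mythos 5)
Your overall strategy (write the parent multi-sum identity as a contour integral and then extract the single sum) is the right one, but the specific mechanism you propose does not work, and it is not what the paper does. The paper does not invoke Lemma \ref{lem-integral} here; it uses Rosengren's balanced integral evaluation \eqref{R32}, which under the condition $\alpha_1\alpha_2=\beta_1\beta_2\beta_3$ expresses the \emph{entire} integral as an explicit product prefactor times a \emph{single} ${}_2\phi_1$-type sum --- and that single sum is exactly the left-hand side of \eqref{cor-id-1}--\eqref{cor-id-4}. Equating this with the product evaluation of the same integral already obtained in Theorem \ref{thm-8} finishes the proof in one step; there is no system of residues to evaluate and no unknown term to solve for.

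Your route via Lemma \ref{lem-integral} fails for a concrete reason. For these integrands one has $A=C=3$ (numerator factors $w^{1/2}z,\,-w^{1/2}z,\,q^2z$ against denominator factors $uz,\,uqz,\,vz$), so each of the three ``idem'' terms is a ${}_3\phi_2$-type sum whose argument is $a_1a_2a_3/(c_1c_2c_3)=q^2$ in every case. The target sums in \eqref{cor-id-1}--\eqref{cor-id-4} have arguments $-\zeta_4q$ or $\pm\zeta_4q^2$; since $q^2$ never equals these, none of the three residue terms can be ``precisely the desired single sum,'' and the other residues are not $q$-binomial/Gauss-summable either (for \eqref{cor-id-1} the residue at $c_1=-1$ reduces to $\sum_{n}\frac{(-q^2;q^4)_nq^{2n}}{(q^2;q^2)_n(q^2;q^4)_n}$, which \eqref{eq-Gauss} and \eqref{eq-qbinomial} do not evaluate). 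The template of Corollary \ref{cor-mod12} transfers only because there $C=2$ and one residue happens to be a geometric/$q$-binomial sum. Separately, your parent for \eqref{cor-id-1} is off: the paper derives it from \eqref{(1,2,4)-1}, i.e.\ $F(-1,q,-q^2)$ (the choice $\beta_1=-1$ is what produces the $(-1;q^2)_n$ in the denominator of the sum), not from \eqref{(1,2,4)-2} or \eqref{03.10-8}. The repair is to replace Lemma \ref{lem-integral} by \eqref{R32} throughout; with that change your identifications for \eqref{cor-id-2}--\eqref{cor-id-4} (namely \eqref{(1,2,4)-5}, \eqref{(1,2,4)-4} and \eqref{(1,2,4)-3}) are correct, and the remaining work is only the product manipulation you already anticipate.
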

\begin{proof}
From \cite[Eq.\ (3.2)]{Rosengren} we know that when $\alpha_1\alpha_2=\beta_1\beta_2\beta_3$,
\begin{align}\label{R32}
&\oint \frac{(\alpha_1z,\alpha_2z,qz,1/z;q)_\infty}{(\beta_1z,\beta_2z,\beta_3z;q)_\infty}\frac{\diff z}{2\pi iz}=\frac{(\beta_1,\alpha_1/\beta_1;q)_\infty}{(q;q)_\infty} \nonumber \\
&=\frac{(\beta_1,\alpha_1/\beta_1;q)_\infty}{(q;q)_\infty}\sum_{n=0}^\infty \frac{(\alpha_2/\beta_2,\alpha_2/\beta_3;q)_n}{(\beta_1,q;q)_n}\left(\frac{\alpha_1}{\beta_1}\right)^n
\end{align}
Recall the function $F(u,v,w)$ defined in \eqref{Thm124-F-start}. From \eqref{1-(1,2,4)-1} and \eqref{R32} we have
\begin{align}
&F(-1,q,-q^2)=\oint \frac{(\zeta_4 q z,-\zeta_4 q z,q^2z,z^{-1},q^2;q^2)_\infty}{(-z,qz,-qz;q^2)_\infty} \frac{\mathrm{d}z} {2\pi iz} \nonumber  \\
&=(-1,-\zeta_4 q;q^2)_\infty \sum_{n=0}^\infty\frac{(\zeta_4 ,-\zeta_4 ;q^2)_n (-\zeta_4 q)^n}{(-1,q^2;q^2)_n}. \label{cor-id-1-proof}
\end{align}
By \eqref{cor-id-1-proof} and \eqref{(1,2,4)-1} we obtain \eqref{cor-id-1}.

Using \eqref{F-start} and \eqref{R32} we have
\begin{align}
&F(-q^3,q^3,-q^{10})=\oint \frac{(\zeta_4 q^5 z,-\zeta_4 q^5 z,z^{-1},q^2z,q^2;q^2)_\infty}{(q^3 z,-q^3 z,-q^4 z;q^2)_\infty} \frac{\mathrm{d}z} {2\pi iz} \nonumber \\
&=(\zeta_4 q^2,q^3;q^2)_\infty \sum_{n=0}^\infty\frac{(\zeta_4 q,\zeta_4 q^2;q^2)_n (\zeta_4 q^2)^n}{(q^2,q^3;q^2)_n}.
\end{align}
Using \eqref{(1,2,4)-5} we obtain \eqref{cor-id-2}.

Using \eqref{F-start} and \eqref{R32} we have
\begin{align}
&F(-q^2,q,-q^6)=\oint \frac{(\zeta_4 q^3 z,-\zeta_4 q^3 z,z^{-1},q^2z,q^2;q^2)_\infty}{(q z,-q^2 z,-q^3 z;q^2)_\infty} \frac{\mathrm{d}z} {2\pi iz} \nonumber \\
&=(q,\zeta_4 q^2;q^2)_\infty \sum_{n=0}^\infty\frac{(\zeta_4 ,\zeta_4 q;q^2)_n (\zeta_4 q^2)^n}{(q,q^2;q^2)_n}.
\end{align}
Using \eqref{(1,2,4)-4} we obtain \eqref{cor-id-3}.

Similarly, using \eqref{F-start} and \eqref{R32} we have
\begin{align}
&F(-q,q^3,-q^6)=\oint \frac{(\zeta_4 q^3 z,-\zeta_4 q^3 z,z^{-1},q^2z,q^2;q^2)_\infty}{(-qz,-q^2 z,q^3 z;q^2)_\infty} \frac{\mathrm{d}z} {2\pi iz} \nonumber \\
 &=(-q,-\zeta_4 q^2;q^2)_\infty \sum_{n=0}^\infty\frac{(-\zeta_4 ,\zeta_4 q;q^2)_n (-\zeta_4 q^2)^n}{(-q,q^2;q^2)_n}.
\end{align}
Using \eqref{(1,2,4)-3} we obtain \eqref{cor-id-4} after replacing $q$ by $-q$.
\end{proof}

We end this paper by proposing a conjecture. When searching for Rogers--Ramanujan type identities with index $(1,3)$, we find the following interesting identity, which gives a new companion to \eqref{K-conj-1}.
\begin{conj}\label{conj-13}
We have
\begin{align}
\sum_{i,j\geq 0}\frac{q^{i^2+3j^2-3ij+j}}{(q;q)_i(q^3;q^3)_j}&=\frac{(q^6;q^9)_\infty}{(q,q^2,q^2,q^4,q^5,q^5;q^6)_\infty}.
\end{align}
\end{conj}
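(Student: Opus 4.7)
The approach is to reduce this double sum to a tractable form via the constant-term machinery used throughout the paper, exploiting an \emph{$A_2$-type} decomposition of the exponent.

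A short calculation shows that $(i-j)^2+(i-2j)^2+j^2=2(i^2-3ij+3j^2)$, from which
\begin{equation*}
i^2+3j^2-3ij+j=\binom{i-j}{2}+\binom{i-2j}{2}+\binom{j}{2}+i.
\end{equation*}
Each of the three binomial exponents is exactly the shape that appears in Jacobi's triple product, so I would extract each one as a constant term. Using $q^{\binom{n}{2}}=\CT_w[(q,-w,-q/w;q)_\infty\, w^{-n}]$ with three auxiliary variables $x,y,z$ (one per binomial factor), interchanging summation and constant-term extraction, and collapsing the resulting $i$- and $j$-sums by Euler's identity \eqref{Euler-1}, produces the triple constant term
\begin{equation*}
\sum_{i,j\geq 0}\frac{q^{i^2+3j^2-3ij+j}}{(q;q)_i(q^3;q^3)_j}
=(q;q)_\infty^{3}\,\CT_{x,y,z}\!\left[\frac{(-x,-q/x,-y,-q/y,-z,-q/z;q)_\infty}{(q/(xy);q)_\infty\,(xy^2/z;q^3)_\infty}\right].
\end{equation*}
The remaining analytic task is to evaluate this triple constant term and match it with the conjectured product.

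As a first attempt I would apply the integral evaluation of Lemma \ref{lem-integral} variable by variable: first in $z$, whose only pole comes from $(xy^2/z;q^3)_\infty$, and then in $x$ and $y$, invoking the $A_2$-type Weyl symmetry $x\leftrightarrow y$, $x\leftrightarrow q/x$ to organize the residues. An alternative reduction would use the factorization $(q^3;q^3)_j=(q,\omega q,\omega^2 q;q)_j$ with $\omega=e^{2\pi i/3}$ to recast the inner $j$-sum as a terminating $_3\phi_2$ series, and then apply the $q$-Gauss summation \eqref{eq-Gauss} or the Laughlin transformation \eqref{(Laughlin-(6.1.3))} specialized at cube roots of unity, in the hope of matching one of the Slater-type identities collected in Section \ref{sec-pre}.

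The main obstacle is the mixed-modulus character of the product side: the numerator factor $(q^6;q^9)_\infty$ lives at modulus $9$ while the denominator factors live at modulus $6$, so the common period is $18$ and the right-hand side is not directly a Jacobi triple product. Every constant-term evaluation carried out in this paper terminates in a single-modulus theta product, so the final identification will require either a delicate cross-modulus cancellation of residues at sixth and ninth roots of unity, or a modular-form argument in the spirit of Rosengren's~\cite{Rosengren} treatment of the Kanade--Russell conjectures (matching enough Fourier coefficients on a common congruence subgroup). This is exactly what distinguishes Conjecture \ref{conj-13} from its clean modulus-$9$ companion \eqref{K-conj-1}, and it is the reason the identity has resisted our current toolkit.
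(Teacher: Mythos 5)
This statement is not proved in the paper at all: it is stated as Conjecture \ref{conj-13} and explicitly left open ("We end this paper by proposing a conjecture"), so there is no proof of record to compare yours against. What matters, then, is whether your proposal actually closes the gap, and it does not. Your opening algebra is correct --- one checks that $(i-j)^2+(i-2j)^2+j^2=2(i^2-3ij+3j^2)$ and hence $i^2+3j^2-3ij+j=\binom{i-j}{2}+\binom{i-2j}{2}+\binom{j}{2}+i$ --- and the resulting triple constant-term representation follows routinely from \eqref{Euler-1} and \eqref{Jacobi}. But everything after that is a list of strategies rather than an argument: you never evaluate the triple constant term, never carry out the residue computation via Lemma \ref{lem-integral}, and never produce the claimed $\,{}_3\phi_2$ reduction or identify a matching entry in Section \ref{sec-pre}. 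You candidly concede the central difficulty yourself: the product side mixes moduli $9$ and $6$, so it is not a single Jacobi triple product, and none of the single-modulus endgames used elsewhere in the paper applies. A plan whose final step is "this will require either a delicate cross-modulus cancellation or a modular-form argument" is an accurate diagnosis of why the identity is hard, not a proof.

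If you want to push this further, the two concrete things missing are: (i) an actual evaluation of
\begin{equation*}
\CT_{x,y,z}\!\left[\frac{(-x,-q/x,-y,-q/y,-z,-q/z;q)_\infty}{(q/(xy);q)_\infty\,(xy^2/z;q^3)_\infty}\right],
\end{equation*}
for which the $z$-integration alone already produces an infinite sum of residues at $z=xy^2q^{3n}$ that must then be resummed before the $x$- and $y$-integrations can even begin; and (ii) a verification mechanism for the cross-modulus product, e.g.\ rewriting the right-hand side as an eta-quotient on a congruence subgroup of level $\mathrm{lcm}(6,9)=18$ and bounding the number of coefficients that must agree. Absent either, the conjecture remains exactly as open as the paper leaves it.
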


\subsection*{Acknowledgements}
This work was supported by the National Natural Science Foundation of China (12171375). We thank S.O. Warnaar for providing the proofs described in Remarks \ref{rem-Warnaar-pq} and \ref{rem-Warnaar} and some helpful comments.

\end{document}